\def\BibTeX{{\rm B\kern-.05em{\sc i\kern-.025em b}\kern-.08em
		T\kern-.1667em\lower.7ex\hbox{E}\kern-.125emX}}
\newtheorem{lemma}{Lemma}
\newtheorem{theorem}{Theorem}
\newtheorem{rem}{Remark}
\newtheorem{assumption}{Assumption}
\newtheorem{proposition}{Proposition}
\newtheorem{corollary}{Corollary}
\newtheorem{example}{Example}
\begin{document}

\title{\LARGE \bf Distributed Optimization with Coupling Constraints}

\author{Xuyang Wu, He Wang, and Jie Lu\thanks{X. Wu is with the Division of Decision and Control Systems, KTH Royal Institute of Technology, SE-100 44 Stockholm, Sweden. Email: {\tt xuyangw@kth.se}.}
\thanks{H. Wang and J. Lu are with the School of Information Science and Technology, ShanghaiTech University, 201210 Shanghai, China. Email: {\tt \{wanghe, lujie\}@shanghaitech.edu.cn}.}}
\maketitle

\begin{abstract}
In this paper, we develop a novel distributed algorithm for addressing convex optimization with both nonlinear inequality and linear equality constraints, where the objective function can be a general nonsmooth convex function and all the constraints can be fully coupled. Specifically, we first separate the constraints into three groups, and design two primal-dual methods and utilize a virtual-queue-based method to handle each group of the constraints independently. Then, we integrate these three methods in a strategic way, leading to an integrated primal-dual proximal (IPLUX) algorithm, and enable the distributed implementation of IPLUX. We show that IPLUX achieves an $O(1/k)$ rate of convergence in terms of optimality and feasibility, which is stronger than the convergence results of the state-of-the-art distributed algorithms for convex optimization with coupling nonlinear constraints. Finally, IPLUX exhibits competitive practical performance in the simulations.
\end{abstract}

\begin{IEEEkeywords}
Distributed optimization, constrained optimization, proximal algorithm, primal-dual method.
\end{IEEEkeywords}

%
%
\section{Introduction}\label{sec:introduction}

This paper is intended to develop a distributed algorithm for the following convex composite optimization problem with coupling nonlinear inequality and linear equality constraints:
\begin{equation}\label{eq:proboriginal}
\begin{split}
\underset{x_1\in \mathbb{R}^{d_1},\ldots,x_n\in \mathbb{R}^{d_n}}{\operatorname{minimize}}~~&\sum_{i=1}^n (f_i(x_i)+h_i(x_i))\\
\operatorname{subject~to}~~~~&\sum_{i=1}^n \bar{g}_i^\ell(x_i) \le 0,\quad\forall \ell=1,\ldots,\bar{p},\\
&\sum_{i=1}^n\bar{A}_ix_i=\bar{b}.\\
\end{split}
\end{equation}
In problem~\eqref{eq:proboriginal}, the decision variables $x_i\in\mathbb{R}^{d_i}$ $\forall i=1,\ldots,n$ are possibly of different dimensions. In the objective function, each $f_i:\mathbb{R}^{d_i}\rightarrow \mathbb{R}$ is a convex and differentiable function whose gradient is Lipschitz continuous, and each $h_i:\mathbb{R}^{d_i}\rightarrow \mathbb{R}\cup\{+\infty\}$ is a convex function that can be non-differentiable. Since $h_i$ is allowed to take the value of $+\infty$, it may include an indicator function with respect to any closed convex set $X_i\subseteq\mathbb{R}^{d_i}$, so that a set constraint $x_i\in X_i$ can be implicitly imposed. Each of the $\bar{p}$ nonlinear inequality constraints in \eqref{eq:proboriginal} is described by the sum of $n$ convex functions $\bar{g}_i^\ell:\mathbb{R}^{d_i}\rightarrow\mathbb{R}$, $\forall i=1,\ldots,n$, where each $\bar{g}_i^\ell$ is associated with the variable $x_i$. The linear equality constraint is encoded by the matrices $\bar{A}_i\in \mathbb{R}^{\bar{m}\times d_i}$ $\forall i=1,\ldots,n$ and the vector $\bar{b}\in \mathbb{R}^{\bar{m}}$. 

\subsection{Motivating Examples}\label{ssec:examples}

Problem~\eqref{eq:proboriginal} arises in many real applications and is often of huge size. Typically, we distribute the problem data to a network of agents and employ the agents to cooperatively address the entire problem. Presented below are several widely-studied distributed optimization problems that are specialized from \eqref{eq:proboriginal}.

\subsubsection{Consensus Optimization}\label{sssec:consensus}

Consensus optimization is prevalent in multi-agent systems \cite{Nedic15}, which requires all the agents to reach a consensus that minimizes their total costs subject to their individual constraints. This problem is usually formulated as $\operatorname{minimize}_{z\in\cap_{i=1}^nX_i}\sum_{i=1}^n \varphi_i(z)$, where each $\varphi_i$ is a convex function and each $X_i\subseteq\mathbb{R}^d$ is a closed convex set. It can also be rewritten as
\begin{align*}
\underset{x_1,\ldots,x_n\in\mathbb{R}^d}{\operatorname{minimize}}~~~~&\sum_{i=1}^n (\varphi_i(x_i)+\mathcal{I}_{X_i}(x_i))\displaybreak[0]\\
\operatorname{subject~to}~~~& x_1=x_2=\cdots=x_n,
\end{align*}
where $\mathcal{I}_{X_i}(x_i)$ is the indicator function with respect to $X_i$, i.e., $\mathcal{I}_{X_i}(x_i)=0$ if $x_i\in X_i$ and $+\infty$ otherwise. Clearly, it is in the form of \eqref{eq:proboriginal} with only the linear equality constraint.

\subsubsection{Distributed Control}
Suppose we conduct model predictive control (MPC) on a large-scale control system consisting of $n$ interconnected subsystems. The subsystems jointly derive their control actions by iteratively solving convex optimization problems in the following form \cite{Giselsson13}: 
\begin{align*}
\underset{y_i,z_i,\;\forall i=1,\ldots,n}{\operatorname{minimize}}~~&\sum_{i=1}^n\Bigl(\frac{1}{2}y_i^TH_iy_i+w_i^Ty_i+\gamma\|z_i\|_1\Bigr)\displaybreak[0]\\
\operatorname{subject~to}~~~& \sum_{i=1}^n P_{ji}y_i\le q_j,\quad\forall j=1,\ldots,n,\displaybreak[0]\\
&\sum_{i=1}^n A_{ji}'y_i = b_j',\quad\forall j=1,\ldots,n,\displaybreak[0]\\
&\Bigl(\sum_{i=1}^nA_{ji}''y_i\Bigr)-b_j'' = z_j,\quad\forall j=1,\ldots,n.
\end{align*}
Here, $H_i$ is a positive semidefinite matrix, $w_i$ is a given vector, and $\gamma$ is a nonnegative weight. Each $y_i$ consists of the states and the control variables associated with subsystem $i$. In addition, $z_i$ $\forall i=1,\ldots,n$ are auxiliary variables, which, together with the last group of constraints, are used for $\ell_1$-regularization. 
The first group of constraints describe the coupling relations among the subsystems, which are often sparsely connected, and the second group of constraints characterize the system model. Observe that such a distributed MPC problem is also an example of \eqref{eq:proboriginal}.	

\subsubsection{Resource Allocation}\label{sssec:resource}
Many resource allocation problems can be cast in the form of \eqref{eq:proboriginal}. For example, the economic dispatch problem \cite{Yang2016} requires $n$ power generators to determine the minimum-cost power generations by solving
\begin{align*}
\underset{x_1,\ldots,x_n\in\mathbb{R}}{\operatorname{minimize}}~~~&~\sum_{i=1}^n \varphi_i(x_i)\displaybreak[0]\\
\operatorname{subject~to}~~&~\sum_{i=1}^n x_i = D,\displaybreak[0]\\
~~&~x_i^{\min}\le x_i\le x_i^{\max},\quad\forall i=1,\ldots,n.
\end{align*}
Here, $\varphi_i(x_i)$ represents the power generation cost of generator $i$ when its power generation is $x_i$, $D$ is the global power demand across the entire power network, and $0\le x_i^{\min}<x_i^{\max}<\infty$ indicates the local generation limit of generator $i$. It is straightforward to see that the above problem is also a particular form of \eqref{eq:proboriginal}, where the constraint $x_i^{\min}\le x_i\le x_i^{\max}$ can be either viewed as inequality constraints or merged into the objective function via the indicator function with respect to the closed interval $[x_i^{\min}, x_i^{\max}]$.

\subsection{Literature Review}\label{ssec:literature}

To date, a large volume of distributed optimization algorithms have been proposed (e.g., \cite{Nedic10,Nedic15,Nedic17,ShiW15,ShiW15a,QuG19,Koshal11,Wu19,XiaoL06b,Lakshmanan08,Giselsson13,Yang2016,Nedic17a,ChangTH14,Falsone17,Notarnicola20,Liang19,Liang19a,Wu19a}), which allow a network of agents to address various convex optimization problems only through one-hop interactions. Most of these algorithms (e.g., \cite{Nedic10,Giselsson13, Nedic15, Nedic17, ShiW15, ShiW15a, Yang2016,QuG19, Wu19, XiaoL06b, Lakshmanan08, Koshal11, Nedic17a, Wu19a}) are only guaranteed to solve \emph{linearly}-constrained problems. For example, \cite{Nedic10, Nedic15, Nedic17, ShiW15, ShiW15a, QuG19, Wu19, Wu19a} focus on the consensus optimization problem introduced in Section~\ref{sssec:consensus}, \cite{XiaoL06b, Lakshmanan08, Yang2016,Nedic17a} study resource allocation problems exemplified in Section~\ref{sssec:resource}, and \cite{Giselsson13,Koshal11} tackle problems with linear inequality constraints.

Despite the prevalence of convex optimization problems with \emph{nonlinear} inequality constraints in practice, relatively few methods manage to solve such problems in a distributed setting. It is even more challenging when the nonlinear constraints have a coupling structure. Nevertheless, the consensus-based primal-dual perturbation method \cite{ChangTH14}, the dual-decomposition-based method \cite{Falsone17}, and the relaxation and successive distributed decomposition (RSDD) method \cite{Notarnicola20} are able to handle coupling nonlinear inequality constraints, which achieve asymptotic convergence with diminishing step-sizes, provided that the constraint set is compact. Moreover, the distributed primal-dual gradient method \cite{Liang19} and the distributed dual subgradient method \cite{Liang19a} can deal with both nonlinear inequality and linear equality constraints like problem~\eqref{eq:proboriginal}, where \cite{Liang19} ensures asymptotic convergence and \cite{Liang19a} establishes an $O(\ln k/\sqrt{k})$ convergence rate, provided that the objective function is smooth on the constraint set.

\subsection{Contributions and Paper Organization}

In this paper, we propose a distributed algorithm for solving \eqref{eq:proboriginal}. To this end, we first reformulate \eqref{eq:proboriginal} by decoupling the nonlinear constraints, leading to an equivalent problem that contains three types of constraints, namely, the sparsely-coupled linear equality constraints, the densely/fully-coupled linear equality constraints, and the sparsely-coupled nonlinear inequality constraints. To address this equivalent problem, we develop two new primal-dual proximal methods that are inspired from the Method of Multipliers \cite{Boyd11} and the P-EXTRA algorithm \cite{ShiW15a}, and are intended for the sparse and dense linear equality constraints, respectively. Then, we strategically integrate these two methods with the virtual-queue-based algorithm in \cite{YuH17} which targets at the sparse nonlinear inequality constraints. We also approximate the smooth components in the objective function for enhancing the computational efficiency. The above process constructs an \underline{i}ntegrated \underline{p}rima\underline{l}-d\underline{u}al pro\underline{x}imal algorithm, referred to as IPLUX. We further provide an approach to implementing IPLUX in a distributed fashion over an underlying undirected graph. The contributions of this paper are highlighted as follows:
\begin{enumerate}
\item IPLUX is a novel distributed algorithm for solving the constrained convex optimization problem~\eqref{eq:proboriginal} that generalizes a number of common distributed optimization problems such as those described in Section~\ref{ssec:examples}. 
\item IPLUX achieves optimality and feasibility at a rate of $O(1/k)$, where $k$ is the number of iterations. This is stronger than the convergence results of the existing distributed methods \cite{ChangTH14,Falsone17,Notarnicola20,Liang19,Liang19a} that are also guaranteed to solve convex optimization problems with \emph{coupling nonlinear} inequality constraints. In particular, the $O(1/k)$ rate of IPLUX is faster than the $O(\ln k/\sqrt{k})$ rate in \cite{Liang19a} and is obtained under less restrictive assumptions. Moreover, \cite{ChangTH14,Falsone17,Notarnicola20,Liang19} only prove asymptotic convergence. 
\item We illustrate the superior practical convergence performance of IPLUX via a couple of numerical examples.
\end{enumerate}

The outline of the paper is as follows: Section~\ref{sec:probform} transforms problem~\eqref{eq:proboriginal} into an equivalent form, which facilitates the development of IPLUX in Section~\ref{sec:algdevelop}. Section~\ref{sec:convanal} is dedicated to the convergence analysis, and Section~\ref{sec:numericalexample} presents the simulation results. Section~\ref{sec:conclusion} then concludes the paper. 

A preliminary conference version of this paper can be found in \cite{WuX20}, which contains no proofs and addresses a special case of problem~\eqref{eq:proboriginal} by assuming the constraints to be sparse. Accordingly, this paper generalizes both the algorithm development and the convergence analysis in \cite{WuX20}.

\subsection{Notation} 
For any set $X\subseteq\mathbb{R}^n$, $\operatorname{rel\;int} X$ represents its relative interior. We use $\{\cdot,\cdot\}$ to denote an unordered pair, $\|\cdot\|$ the Euclidean vector norm, and $\|\cdot\|_1$ the $\ell_1$ vector norm. For any $x,y\in \mathbb{R}^n$, $\max\{x,y\}\in\mathbb{R}^n$ is the element-wise maximum of $x$ and $y$. 
In addition, we let $I_n$ and $\mathbf{O}_n$ denote the $n\times n$ identity matrix and zero matrix, respectively. Moreover, $\mathbf{1}_n$ and $\mathbf{0}_n$ are the $n$-dimensional all-one vector and zero vector. In case we omit the subscripts, these matrices and vectors are of proper dimensions by default. For any matrix $A\in\mathbb{R}^{m\times n}$, $\operatorname{Range}(A)$ is the range of $A$ and $\|A\|_2$ is the spectral norm of $A$. If $A=A^T\in \mathbb{R}^{n\times n}$ is positive semidefinite, we define $A^\dag$ as its Moore-Penrose inverse and $\|\mathbf{x}\|_A=\sqrt{\mathbf{x}^TA\mathbf{x}}$ for all $\mathbf{x}\in \mathbb{R}^n$. For any two matrices $A\in \mathbb{R}^{m\times n}$ and $B\in \mathbb{R}^{m'\times n'}$, $\operatorname{diag}(A,B)\in \mathbb{R}^{(m+m')\times (n+n')}$ is the block diagonal matrix with $A$ and $B$ sequentially constituting its diagonal blocks. For any function $\varphi:\mathbb{R}^n\rightarrow\mathbb{R}$, $\partial \varphi(x)$ denotes the subdifferential (i.e., the set of subgradients) of $\varphi$ at $x\in\mathbb{R}^n$, and $\operatorname{dom}(\varphi):=\{x\in\mathbb{R}^n|~\varphi(x)<+\infty\}$ is the domain of $\varphi$. 

%
%
\section{Problem Reformulation}\label{sec:probform}

In this section, we reformulate problem~\eqref{eq:proboriginal} in order to facilitate the algorithm design.

\subsection{Constraint Separation}\label{ssec:separation}

In many real-world problems, part of the constraints may have a sparse structure, i.e., some of the $\bar{g}_i^{\ell}$'s and some rows of the $\bar{A}_i$'s may be zero. In such cases, we separate the sparse constraints from the dense ones, and rewrite \eqref{eq:proboriginal} as follows:

\begin{equation}\label{eq:prob}
\begin{split}
\underset{x_i\in \mathbb{R}^{d_i},\;\forall i\in \mathcal{V}}{\operatorname{minimize}}~&\sum_{i\in \mathcal{V}} (f_i(x_i)+h_i(x_i))\\
\operatorname{subject~to}~&\sum_{i\in \mathcal{V}} g_i(x_i) \le \mathbf{0}_{\tilde{p}},\;\;\,\quad\quad\qquad\qquad (\text{2a})\\
&\sum_{i\in \mathcal{V}} A_ix_i=\sum_{i\in\mathcal{V}}b_i,\;\;\;\qquad\qquad\quad (\text{2b})\\
&\sum_{j\in S_i^{\operatorname{in}}} g_{ij}^s(x_j)\le \mathbf{0}_{p_i},~\forall i\in \mathcal{V}^{\operatorname{in}},\quad (\text{2c})\\
&\sum_{j\in S_i^{\operatorname{eq}}} A_{ij}^sx_j= b_i^s,~\forall i\in \mathcal{V}^{\operatorname{eq}},\qquad\!(\text{2d})
\end{split}
\end{equation}
where $\mathcal{V}=\{1,\ldots,n\}$. The constraints (2a) and (2b) represent the densely-coupled constraints with few zero entries, where each $g_i:\mathbb{R}^{d_i}\rightarrow \mathbb{R}^{\tilde{p}}$ is a vector-valued convex function (i.e., each of its entry is a convex function), $A_i\in \mathbb{R}^{\tilde{m}\times d_i}$, and $b\in \mathbb{R}^{\tilde{m}}$. Note that $\tilde{p}$ and $\tilde{m}$ can be $0$, which means the absence of (2a) and (2b). The constraints (2c) and (2d) represent the sparse constraints with an abundance of zero entries. Here, $\mathcal{V}^{\operatorname{in}}$ and $\mathcal{V}^{\operatorname{eq}}$ are subsets of $\mathcal{V}$, which could be empty so that (2c) and (2d) are also allowed to be absent. In addition, $S_i^{\operatorname{in}}$ and $S_i^{\operatorname{eq}}$ are proper subsets of $\mathcal{V}$ (which do not necessarily contain $i$) and often, $|S_i^{\operatorname{in}}|,|S_i^{\operatorname{eq}}|\ll n$. For convenience, we additionally define $S_i^{\operatorname{in}}=\emptyset$ $\forall i\in\mathcal{V}-\mathcal{V}^{\operatorname{in}}$ and $S_i^{\operatorname{eq}}=\emptyset$ $\forall i\in\mathcal{V}-\mathcal{V}^{\operatorname{eq}}$. Moreover, each $g_{ij}^s:\mathbb{R}^{d_j}\rightarrow \mathbb{R}^{p_i}$ is convex, $A_{ij}^s\in \mathbb{R}^{m_i\times d_j}$, and $b_i^s\in \mathbb{R}^{m_i}$. Note that $\tilde{p}+\sum_{i\in\mathcal{V}^{\operatorname{in}}} p_i=\bar{p}$ and $\tilde{m}+\sum_{i\in\mathcal{V}^{\operatorname{eq}}} m_i=\bar{m}$, where $\bar{p}$ and $\bar{m}$ are the dimensions of the inequality and equality constraints in problem~\eqref{eq:proboriginal}, respectively. 

There are multiple ways of casting the constraints in \eqref{eq:proboriginal} into (2a)--(2d), as is illustrated in the following example.

\begin{example}\label{example:densesparse}
\rm
Consider problem~\eqref{eq:proboriginal} with $n=4$, $d_1=d_2=d_3=d_4=1$, and the following constraints:
\begin{align*}
\begin{array}{llllllll}
\bar{g}_1^1(x_1)&+&\bar{g}_2^1(x_2)&+&\bar{g}_3^1(x_3)&+&\bar{g}_4^1(x_4)&\le 0,\\
\bar{g}_1^2(x_1)& & &+&\bar{g}_3^2(x_3)&+&\bar{g}_4^2(x_4)&\le 0,\\
\begin{bmatrix}1\\0\\0\end{bmatrix}\!x_1&+&\begin{bmatrix}2\\1\\3\end{bmatrix}\!x_2&+&\begin{bmatrix}0\\0\\4\end{bmatrix}\!x_3&+&\begin{bmatrix}0\\0\\0\end{bmatrix}\!x_4&=\begin{bmatrix}3\\1\\0\end{bmatrix}.
\end{array}
\end{align*}
Below, we exemplify two different ways of transforming these constraints into (2a)--(2d).

\textbf{Way 1}: Let $g_i(x_i)=\bar{g}_i^1(x_i)$ $\forall i=1,\ldots,4$ in (2a), and let (2b) be absent (i.e., $\tilde{m}=0$). Then, set $\mathcal{V}^{\operatorname{in}}=\{1\}$, $S_1^{\operatorname{in}}=\{1,3,4\}$, and $g_{1j}^s=\bar{g}_j^2$ $\forall j\in S_1^{\operatorname{in}}$ in (2c). For (2d), choose $\mathcal{V}^{\operatorname{eq}}=\{1,2\}$, $S_1^{\operatorname{eq}}=\{1,2\}$, $A_{11}^s=\begin{bmatrix}1\\0\end{bmatrix}$, $A_{12}^s=\begin{bmatrix}2\\1\end{bmatrix}$, $b_1^s=\begin{bmatrix}3\\1\end{bmatrix}$, $S_2^{\operatorname{eq}}=\{2,3\}$, $A_{22}^s=3$, $A_{23}^s=4$, and $b_2^s=0$.

\textbf{Way 2}: Let $g_i(x_i)=\begin{bmatrix}\bar{g}_i^1(x_i)\\ \bar{g}_i^2(x_i)\end{bmatrix}$ $\forall i=1,3,4$ and $g_2(x_2)=\begin{bmatrix}\bar{g}_2^1(x_2)\\ 0\end{bmatrix}$ in (2a), and let both (2b) and (2c) be absent (i.e., $\tilde{m}=0$ and $\mathcal{V}^{\operatorname{in}}=\emptyset$). For (2d), let $\mathcal{V}^{\operatorname{eq}}=\{1,3\}$, $S_1^{\operatorname{eq}}=\{1,2\}$, $A_{11}^s=1$, $A_{12}^s=2$, $b_1^s=3$, $S_3^{\operatorname{eq}}=\{2,3\}$, $A_{32}^s=\begin{bmatrix}1\\3\end{bmatrix}$, $A_{33}^s=\begin{bmatrix}0\\4\end{bmatrix}$, and $b_3^s=\begin{bmatrix}1\\0\end{bmatrix}$.
\end{example}

The above separation of constraints is \emph{not mandatory}. In other words, we can always let $\mathcal{V}^{\operatorname{in}}$ and $\mathcal{V}^{\operatorname{eq}}$ be empty, so that (2a)--(2b) are identical to the constraints in \eqref{eq:proboriginal}. However, we may take advantage of such sparsity to reduce the variable dimension of our proposed algorithm (cf. Remark~\ref{rem:dimension}).

\subsection{Underlying Graph}\label{ssec:graph}

To solve problem~\eqref{eq:prob} in a distributed way, we employ $n$ agents and let them cooperate over an undirected graph $\mathcal{G}$ induced from the coupling structure of (2a)--(2d). 

We let $\mathcal{V}$ be the vertex set of $\mathcal{G}$. Suppose each node (i.e., agent) $i\in\mathcal{V}$ knows $S_i^{\operatorname{in}}$, $S_i^{\operatorname{eq}}$, and whether it belongs to $S_j^{\operatorname{in}}$ and $S_j^{\operatorname{eq}}$ for any other $j\in\mathcal{V}$. Define
\begin{align*}
\mathcal{N}_i^s\!=\!\{j\in\mathcal{V}\!-\!\{i\}|~j\!\in\!S_i^{\operatorname{in}}\cup S_i^{\operatorname{eq}}\text{ or }i\!\in\! S_j^{\operatorname{in}}\cup S_j^{\operatorname{eq}}\},\;\forall i\in\mathcal{V}.
\end{align*}
Clearly, for any $i,j\in\mathcal{V}$, $i\in\mathcal{N}_j^s$ if and only if $j\in\mathcal{N}_i^s$. Also, define $\mathcal{E}^s=\{\{i,j\}|~i\in\mathcal{V},\;j\in\mathcal{N}_i^s\}$.

We then construct the edge set $\mathcal{E}$ of $\mathcal{G}$ according to the following three cases. Without loss of generality, we assume at least one of the constraints (2a)--(2d) exists, because otherwise the problem is completely decoupled and the nodes can independently solve it without any interactions.

\emph{Case 1}: When the sparse constraints (2c)(2d) are absent (i.e., $\mathcal{V}^{\operatorname{in}}=\mathcal{V}^{\operatorname{eq}}=\emptyset$), $\mathcal{G}$ can be \emph{any} undirected, connected graph. 

\emph{Case 2}: When the dense constraints (2a)(2b) are absent (i.e., $\tilde{m}=\tilde{p}=0$), we let $\mathcal{E}=\mathcal{E}^s$. It is possible that $\mathcal{G}=(\mathcal{V},\mathcal{E})$ is unconnected in this case, which, due to the absence of (2a)--(2b), implies that problem~\eqref{eq:proboriginal} can be decoupled into several independent subproblems, and each subproblem corresponds to a connected component in $\mathcal{G}$.

\emph{Case 3}: When the problem contains both sparse and dense constraints (i.e., $\mathcal{V}^{\operatorname{in}}\cup\mathcal{V}^{\operatorname{eq}}\neq\emptyset$ and $\tilde{m}+\tilde{p}>0$), we let $\mathcal{E}=\mathcal{E}^s$ if the graph $(\mathcal{V},\mathcal{E}^s)$ is connected. Otherwise, we obtain $\mathcal{E}$ by artificially adding proper links to $\mathcal{E}^s$ until the resulting graph is connected.

With the undirected graph $\mathcal{G}=(\mathcal{V},\mathcal{E})$ derived from the above three cases, we suppose each node $i\in\mathcal{V}$ can directly communicate with its neighbors in $\mathcal{N}_i:=\{j|~\{i,j\}\!\in\!\mathcal{E}\}\!\supseteq\!\mathcal{N}_i^s$. 

Fig.~\ref{fig:network} illustrates the underlying graph $\mathcal{G}=(\mathcal{V},\mathcal{E})$ induced from the problem in Example~\ref{example:densesparse} applied with two different ways of constraint separation. For Way~1 in Example~\ref{example:densesparse}, $\mathcal{E}^s=\{\{1,3\},\{1,4\},\{1,2\},\{2,3\}\}$. According to the above \emph{Case~3}, as $(\mathcal{V},\mathcal{E}^s)$ is connected, we can directly set $\mathcal{E}=\mathcal{E}^s$. In Way~2, $\mathcal{E}^s=\{\{1,2\},\{2,3\}\}$, and $(\mathcal{V},\mathcal{E}^s)$ is unconnected. Again, from \emph{Case~3}, we artificially add a link, say, $\{1,4\}$, to $\mathcal{E}^s$ and obtain $\mathcal{E}=\{\{1,2\},\{2,3\},\{1,4\}\}$, which guarantees that $(\mathcal{V},\mathcal{E})$ is connected.

\begin{figure}
	\centering 
	\subfigure[Way~1 in Example~\ref{example:densesparse}]{ 
		\label{fig:subfig:a} 
		\includegraphics[width=1.2in]{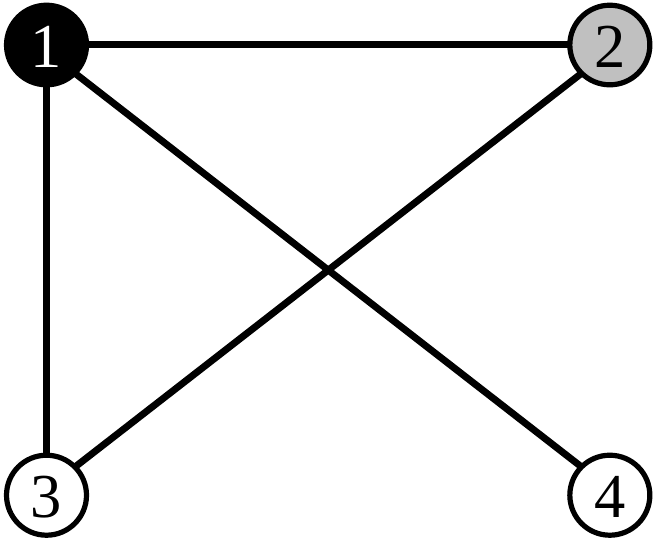}}
	\qquad
	\subfigure[Way~2 in Example~\ref{example:densesparse}]{ 
		\label{fig:subfig:b} 
		\includegraphics[width=1.2in]{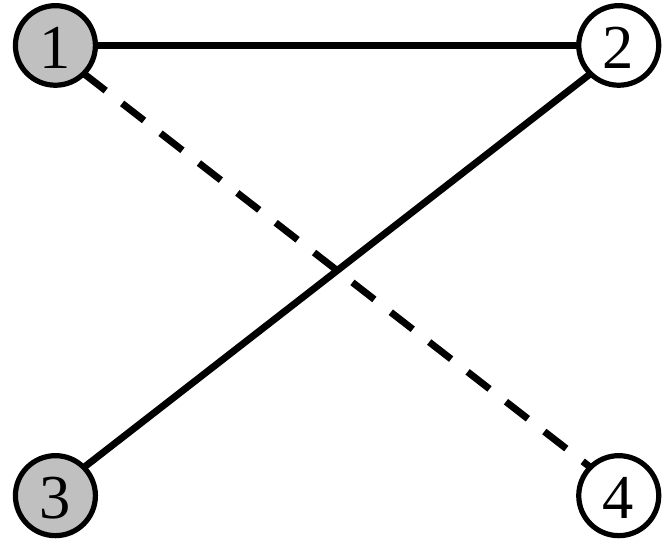}} 
\caption{Underlying interaction graph $\mathcal{G}$ (Black nodes belong to $\mathcal{V}^{\operatorname{in}}\cap\mathcal{V}^{\operatorname{eq}}$, gray nodes are in $\mathcal{V}^{\operatorname{eq}}$ but not in $\mathcal{V}^{\operatorname{in}}$, and white nodes are in neither $\mathcal{V}^{\operatorname{in}}$ nor $\mathcal{V}^{\operatorname{eq}}$. Solid lines represent links in $\mathcal{E}^s$ and dashed lines represent links in $\mathcal{E}-\mathcal{E}^s$.)}
\label{fig:network} 
\end{figure}

Next, we distribute the problem data of \eqref{eq:prob} to every node in $\mathcal{G}$. Assigned to each node $i\in\mathcal{V}$ are (i) the local objective functions $f_i:\mathbb{R}^{d_i}\rightarrow \mathbb{R}$ and $h_i:\mathbb{R}^{d_i}\rightarrow \mathbb{R}\cup\{+\infty\}$, (ii) the constraint functions $g_i:\mathbb{R}^{d_i}\rightarrow \mathbb{R}^{\tilde{p}}$ and $g_{ji}^s:\mathbb{R}^{d_i}\rightarrow \mathbb{R}^{p_j}$ for all $j\in\mathcal{V}^{\operatorname{in}}$ such that $i\in S_j^{\operatorname{in}}$, and (iii) the matrices $A_i\in \mathbb{R}^{\tilde{m}\times d_i}$ and $A_{ji}^s\in \mathbb{R}^{m_j\times d_i}$ for all $j\in\mathcal{V}^{\operatorname{eq}}$ such that $i\in S_j^{\operatorname{eq}}$, as well as the vector $b_i\in \mathbb{R}^{\tilde{m}}$. Moreover, each node $i\in\mathcal{V}^{\operatorname{eq}}$ is additionally associated with the vector $b_i^s\in \mathbb{R}^{m_i}$.

Our goal is to design a distributed algorithm for solving problem~\eqref{eq:prob} over the underlying graph $\mathcal{G}$, where each node only has access to its own problem data and only interacts with its neighbors, so that problem~\eqref{eq:prob} can be solved in an efficient and scalable way. 

\subsection{Equivalent Problem}\label{ssec:probtrans}

This subsection transforms \eqref{eq:prob} into an equivalent form.

We first introduce a set of auxiliary variables $t_i\in \mathbb{R}^{\tilde{p}}$ $\forall i\in \mathcal{V}$, and equivalently convert the constraint (2a) to
\begin{align}
&g_i(x_i)\le t_i,\quad\forall i\in\mathcal{V},\label{eq:g<=t}\displaybreak[0]\\
&\sum_{i\in \mathcal{V}} t_i=\mathbf{0}_{\tilde{p}}.\label{eq:sumt=0}
\end{align} 
By doing so, we decouple the dense nonlinear inequality constraint (2a) into $n$ independent nonlinear constraints in \eqref{eq:g<=t} at the cost of additionally imposing the linear equality constraint \eqref{eq:sumt=0}, which, intuitively, may be easier to tackle. Then, we attempt to rewrite \eqref{eq:prob} in terms of the new decision variables
\begin{align}
y_i=[x_i^T, t_i^T]^T\in\mathbb{R}^{d_i+\tilde{p}},\quad\forall i\in\mathcal{V}.\label{eq:yi=xiti}
\end{align}

For the sake of compactness, let $\mathbf{y}=[y_1^T, \ldots, y_n^T]^T\in \mathbb{R}^N$, where $N=n\tilde{p}+\sum_{i\in \mathcal{V}} d_i$. We thus express the objective function in \eqref{eq:prob} by $\Phi(\mathbf{y}):=f(\mathbf{y})+h(\mathbf{y})$, with
\begin{align*}
f(\mathbf{y})=\sum_{i\in \mathcal{V}} f_i(x_i), \quad h(\mathbf{y})=\sum_{i\in \mathcal{V}} h_i(x_i).
\end{align*}

Next, we group together all the inequality constraints including \eqref{eq:g<=t} and the sparse inequality constraint (2c) by defining
\begin{equation*}
G_i(\mathbf{y})=\begin{cases}
\begin{bmatrix}
g_i(x_i)-t_i\\
\sum_{j\in S_i^{\operatorname{in}}} g_{ij}^s(x_j)
\end{bmatrix}, & \text{if }i\in\mathcal{V}^{\operatorname{in}},\\
g_i(x_i)-t_i, & \text{otherwise,}
\end{cases}
\end{equation*}
for each $i\in\mathcal{V}$. Observe that
\begin{equation*}
\mathbf{G}(\mathbf{y}):=\begin{bmatrix}
G_1(\mathbf{y})\\
\vdots\\
G_n(\mathbf{y})
\end{bmatrix}\le \mathbf{0}_{n\tilde{p}+\underset{i\in\mathcal{V}^{\operatorname{in}}}{\sum} p_i}
\end{equation*}
is identical to \eqref{eq:g<=t} and (2c).

Subsequently, we express \eqref{eq:sumt=0} and the dense equality constraint (2b) in a compact form. To do so, we introduce the following notations: For each $i\in\mathcal{V}$, let $B_i=\operatorname{diag}(A_i,I_{\tilde{p}})\in\mathbb{R}^{(\tilde{m}+\tilde{p})\times(d_i+\tilde{p})}$ and $c_i=[b_i^T,\mathbf{0}_{\tilde{p}}^T]^T\in\mathbb{R}^{\tilde{m}+\tilde{p}}$. Then, let $\mathbf{B}=\operatorname{diag}(B_1,\ldots,B_n)\in\mathbb{R}^{n(\tilde{m}+\tilde{p})\times N}$ and $\mathbf{c}=[c_1^T,\ldots,c_n^T]^T\in\mathbb{R}^{n(\tilde{m}+\tilde{p})}$. Note that
\begin{align}
(\mathbf{1}_n\!\otimes\! I_{\tilde{m}+\tilde{p}})^T(\mathbf{B}\mathbf{y}\!-\!\mathbf{c})\!=\!\sum_{i\in\mathcal{V}}(B_iy_i\!-\!c_i)\!=\!\begin{bmatrix}\sum\limits_{i\in\mathcal{V}}(A_ix_i\!-\!b_i)\\ \sum_{i\in\mathcal{V}}t_i\end{bmatrix}.\nonumber
\end{align}
Hence, $(\mathbf{1}_n\!\otimes\! I_{\tilde{m}+\tilde{p}})^T(\mathbf{B}\mathbf{y}\!-\!\mathbf{c})\!=\!\mathbf{0}_{\tilde{m}+\tilde{p}}$ is exactly \eqref{eq:sumt=0} and (2b).

It remains to describe the sparse equality constraint (2d) in terms of $\mathbf{y}$. To this end, define $B_{ij}^s\in\mathbb{R}^{m_i\times (d_j+\tilde{p})}$ $\forall i\in\mathcal{V}^{\operatorname{eq}}$ $\forall j\in\mathcal{V}$ as follows:
\begin{equation*}
B_{ij}^s=
\begin{cases}
[A_{ij}^s,\mathbf{O}_{m_i\times\tilde{p}}], & \text{if }j\in S_i^{\operatorname{eq}},\\
\mathbf{O}_{m_i\times (d_j\!+\tilde{p})}, & \text{otherwise.}
\end{cases}
\end{equation*}
Then, let $B_i^s=\begin{bmatrix}B_{i1}^s, \ldots, B_{in}^s\end{bmatrix}\in\mathbb{R}^{m_i\times N}$ $\forall i\in\mathcal{V}^{\operatorname{eq}}$. Note that for each $i\in\mathcal{V}^{\operatorname{eq}}$, $B_{ij}^sy_j$ is equal to $A_{ij}^sx_j$ if $j\in S_i^{\operatorname{eq}}$ and is equal to $\mathbf{0}_{m_i}$ otherwise. Hence, $B_i^s\mathbf{y}-b_i^s=(\sum_{j\in\mathcal{V}}B_{ij}^sy_j)-b_i^s=(\sum_{j\in S_i^{\operatorname{eq}}} A_{ij}^sx_j)-b_i^s$. It follows that (2d) can be written as $\mathbf{B}^s\mathbf{y}=\mathbf{c}^s$, where $\mathbf{B}^s\in\mathbb{R}^{(\sum_{i\in\mathcal{V}^{\operatorname{eq}}}m_i)\times N}$ and $\mathbf{c}^s\in\mathbb{R}^{\sum_{i\in\mathcal{V}^{\operatorname{eq}}}m_i}$ are given by
\begin{equation*}
\mathbf{B}^s=\begin{bmatrix}
B_{\zeta_1}^s\\ \vdots \\ B_{\zeta_a}^s\end{bmatrix},\quad
\mathbf{c}^s=\begin{bmatrix}
b_{\zeta_1}^s\\ \vdots \\ b_{\zeta_a}^s
\end{bmatrix},\quad \{\zeta_1,\ldots,\zeta_a\}=\mathcal{V}^{\operatorname{eq}}.
\end{equation*}
In other words, $\mathbf{B}^s$ and $\mathbf{c}^s$ are obtained by vertically stacking $B_i^s$ and $b_i^s$ $\forall i\in\mathcal{V}^{\operatorname{eq}}$, respectively.	

Combining the above, we derive the following equivalent form of problem~\eqref{eq:prob}:
\begin{equation}\label{eq:originalprobleminy}
\begin{split}
\underset{\mathbf{y}\in \mathbb{R}^N}{\operatorname{minimize}} ~~&~ \Phi(\mathbf{y})=f(\mathbf{y})+h(\mathbf{y})\\
\operatorname{subject~to}~&~\mathbf{G}(\mathbf{y})\le\mathbf{0}_{n\tilde{p}+\underset{{i\in\mathcal{V}^{\operatorname{in}}}}{\sum} p_i},\\
&~(\mathbf{1}_n\otimes I_{\tilde{m}+\tilde{p}})^T(\mathbf{B}\mathbf{y}-\mathbf{c}) = \mathbf{0}_{\tilde{m}+\tilde{p}},\\
&~\mathbf{B}^s\mathbf{y}=\mathbf{c}^s.
\end{split}
\end{equation}
%
%
\section{Algorithm}\label{sec:algdevelop}

In this section, we propose a novel distributed algorithm for solving problem~\eqref{eq:originalprobleminy} that is equivalent to \eqref{eq:prob} and \eqref{eq:proboriginal}.

We adopt a separation principle in our algorithm design. Specifically, we develop two new primal-dual proximal methods and utilize a prior virtual-queue-based method to minimize $\Phi(\mathbf{y})$ subject to each of the three constraints in \eqref{eq:originalprobleminy}, respectively. Then, we deliberately unite these three methods, so that \eqref{eq:originalprobleminy} with all the three constraints can be solved.

\subsection{Proximal Algorithm for Sparse Equality Constraints}\label{ssec:proximalsparse}

We tentatively focus on the sparse linear equality constraint $\mathbf{B}^s\mathbf{y}=\mathbf{c}^s$ and consider the following special case of \eqref{eq:originalprobleminy}:
\begin{equation}\label{eq:sparseeqprob}
	\begin{split}
		\underset{\mathbf{y}\in\mathbb{R}^N}{\operatorname{minimize}}~~&~\Phi(\mathbf{y})\\
		\operatorname{subject~to} ~&~\mathbf{B}^s\mathbf{y}=\mathbf{c}^s.
	\end{split}
\end{equation}

We endeavor to solve problem~\eqref{eq:sparseeqprob} in a distributed fashion. To this end, we try to apply the Method of Multipliers \cite{Boyd11} to \eqref{eq:sparseeqprob}, which has the following primal-dual dynamics:
\begin{align}
\mathbf{y}(k+1) \in& \operatorname{\arg\;\min}_{\mathbf{y}\in \mathbb{R}^N} \Phi(\mathbf{y})+\langle \tilde{\mathbf{v}}(k), \mathbf{B}^s\mathbf{y}\rangle\nonumber\displaybreak[0]\\
&+\frac{\gamma}{2}\|\mathbf{B}^s\mathbf{y}-\mathbf{c}^s\|^2,\label{eq:MMprimal}\\
\tilde{\mathbf{v}}(k+1) =& \tilde{\mathbf{v}}(k)+\gamma(\mathbf{B}^s\mathbf{y}(k+1)-\mathbf{c}^s),\quad\forall k\ge0.\label{eq:MMdual}
\end{align}
Here, $\tilde{\mathbf{v}}(0)\in\mathbb{R}^{\sum_{i\in\mathcal{V}^{\operatorname{eq}}}m_i}$ is arbitrarily chosen and $\gamma>0$. 

Although \eqref{eq:sparseeqprob} has a sparse structure, the primal update \eqref{eq:MMprimal} in the Method of Multipliers is still a \emph{centralized} operation since $\|\mathbf{B}^s\mathbf{y}-\mathbf{c}^s\|^2$ mingles the coordinates of the optimization variable $\mathbf{y}$. To decentralize the primal update, we add a proximal term $\frac{\gamma}{2}(\mathbf{y}-\mathbf{y}(k))^T(\lambda^2 I_N-(\mathbf{B}^s)^T\mathbf{B}^s)(\mathbf{y}-\mathbf{y}(k))$, $\lambda>0$ to the minimized function on the right-hand side of \eqref{eq:MMprimal}, where $\mathbf{y}(0)$ is arbitrarily given. It can be shown that
\begin{align*}
&\|\mathbf{B}^s\mathbf{y}-\mathbf{c}^s\|^2+(\mathbf{y}-\mathbf{y}(k))^T(\lambda^2I_N-(\mathbf{B}^s)^T\mathbf{B}^s)(\mathbf{y}-\mathbf{y}(k))\displaybreak[0]\\
&=\lambda^2\|\mathbf{y}-\mathbf{y}(k)+\frac{1}{\lambda^2}(\mathbf{B}^s)^T(\mathbf{B}^s\mathbf{y}(k)-\mathbf{c}^s)\|^2+C(k),
\end{align*}
where $C(k)=\|\mathbf{B}^s\mathbf{y}(k)-\mathbf{c}^s\|^2-\frac{1}{\lambda^2}\|(\mathbf{B}^s)^T(\mathbf{B}^s\mathbf{y}(k)-\mathbf{c}^s)\|^2$ is independent of $\mathbf{y}$. As a result, the primal update \eqref{eq:MMprimal} is changed to the following:
\begin{equation}\label{eq:sparsexupdate}
\begin{split}
&\mathbf{y}(k+1)=\operatorname{\arg\;\min}_{\mathbf{y}\in \mathbb{R}^N}~\Phi(\mathbf{y})+\langle\tilde{\mathbf{v}}(k),\mathbf{B}^s\mathbf{y}\rangle\\
&+\frac{\gamma\lambda^2}{2}\|\mathbf{y}-\mathbf{y}(k)+\frac{1}{\lambda^2}(\mathbf{B}^s)^T(\mathbf{B}^s\mathbf{y}(k)-\mathbf{c}^s)\|^2.
\end{split}
\end{equation}

To reduce the computational cost of \eqref{eq:sparsexupdate} and \eqref{eq:MMdual}, we additionally apply the linear transformation $\mathbf{v}(k)=(\mathbf{B}^s)^T\tilde{\mathbf{v}}(k)\in\mathbb{R}^N$ $\forall k\ge 0$, which results in
\begin{align}
&\mathbf{y}(k+1) = \operatorname{\arg\;\min}_{\mathbf{y}\in \mathbb{R}^N}~\Phi(\mathbf{y})+\Delta_1^k(\mathbf{y}),\label{eq:sparseyupdatetildev}\\
&\mathbf{v}(k+1)=\mathbf{v}(k)+\gamma(\mathbf{B}^s)^T(\mathbf{B}^s\mathbf{y}(k+1)-\mathbf{c}^s),\label{eq:sparsetildevupdate}
\end{align}
where $\Delta_1^k(\mathbf{y}):=\langle \mathbf{v}(k), \mathbf{y}\rangle+\frac{\gamma\lambda^2}{2}\|\mathbf{y}-\mathbf{y}(k)+\frac{1}{\lambda^2}(\mathbf{B}^s)^T(\mathbf{B}^s\mathbf{y}(k)-\mathbf{c}^s)\|^2$. Note that the computations involving $\mathbf{B}^s$ in \eqref{eq:sparsexupdate} and \eqref{eq:MMdual} include calculating $(\mathbf{B}^s)^T\tilde{\mathbf{v}}(k)$, $(\mathbf{B}^s)^T(\mathbf{B}^s\mathbf{y}(k)-\mathbf{c}^s)$, and $\mathbf{B}^s\mathbf{y}(k+1)$, yet \eqref{eq:sparseyupdatetildev} and \eqref{eq:sparsetildevupdate} only need to calculate $(\mathbf{B}^s)^T(\mathbf{B}^s\mathbf{y}(k+1)-\mathbf{c}^s)$, as $(\mathbf{B}^s)^T(\mathbf{B}^s\mathbf{y}(k)-\mathbf{c}^s)$ can be inherited from the last iteration. 

Due to the above linear transformation, we require $\mathbf{v}(k)\in\operatorname{Range}((\mathbf{B}^s)^T)$ for each $k\ge0$. This can be guaranteed by choosing $\mathbf{v}(0)\in \operatorname{Range}((\mathbf{B}^s)^T)$, because \eqref{eq:sparsetildevupdate} indicates that $\mathbf{v}(k+1)-\mathbf{v}(k)\in\operatorname{Range}((\mathbf{B}^s)^T)$ $\forall k\ge0$. 
	
The updates \eqref{eq:sparseyupdatetildev}--\eqref{eq:sparsetildevupdate} as well as the initialization of arbitrary $\mathbf{y}(0)\in\mathbb{R}^N$ and $\mathbf{v}(0)\in \operatorname{Range}((\mathbf{B}^s)^T)$ constitute a new primal-dual proximal algorithm for solving problems with sparse linear equality constraints in the form of \eqref{eq:sparseeqprob}. It can be viewed as a proximal variation of the Method of Multipliers, and can be executed in a decentralized manner, which will be discussed in Section~\ref{ssec:distributed}.

\subsection{Proximal Algorithm for Dense Equality Constraints}\label{ssec:proximaldense}

We switch to problem~\eqref{eq:originalprobleminy} with only the densely-coupled linear equality constraint, i.e., 
\begin{equation}\label{eq:equalityprob}
\begin{split}
\underset{\mathbf{y}\in\mathbb{R}^N}{\operatorname{minimize}}~~&~ \Phi(\mathbf{y})\\
\operatorname{subject~to} ~&~ (\mathbf{1}_n\otimes I_{\tilde{m}+\tilde{p}})^T(\mathbf{B}\mathbf{y}-\mathbf{c}) =\mathbf{0}_{\tilde{m}+\tilde{p}}.
\end{split}
\end{equation}

Instead of directly addressing \eqref{eq:equalityprob}, we tackle the Lagrange dual problem of \eqref{eq:equalityprob}, which is given by
\begin{equation}\label{eq:equalitydualprobnoncopyvariable}
\underset{u\in \mathbb{R}^{\tilde{m}+\tilde{p}}}{\operatorname{maximize}}\min_{\mathbf{y}\in\mathbb{R}^N} \Phi(\mathbf{y})+\langle u, (\mathbf{1}_n\otimes I_{\tilde{m}+\tilde{p}})^T(\mathbf{B}\mathbf{y}-\mathbf{c})\rangle.
\end{equation}
For each $i\in\mathcal{V}$, let $u_i\in \mathbb{R}^{\tilde{m}+\tilde{p}}$ be a local copy of the dual variable $u\in\mathbb{R}^{\tilde{m}+\tilde{p}}$ in \eqref{eq:equalitydualprobnoncopyvariable}, and let $\mathbf{u}=[u_1^T, \ldots, u_n^T]^T\in\mathbb{R}^{n(\tilde{m}+\tilde{p})}$. Thus, the dual problem~\eqref{eq:equalitydualprobnoncopyvariable} can be rewritten as
\begin{equation}\label{eq:equalitydualproblem}
\begin{split}
\underset{\mathbf{u}\in \mathbb{R}^{n(\tilde{m}+\tilde{p})}}{\operatorname{maximize}}~~&~ D_e(\mathbf{u}):=\min_{\mathbf{y}\in \mathbb{R}^N} \Phi(\mathbf{y})+\langle\mathbf{u},\mathbf{B}\mathbf{y}-\mathbf{c}\rangle\\
\operatorname{subject~to} ~&~ u_1 = \ldots = u_n.
\end{split}
\end{equation}

To solve the equivalent dual problem \eqref{eq:equalitydualproblem}, we develop another proximal algorithm based on the P-EXTRA algorithm \cite{ShiW15a}. P-EXTRA is a distributed first-order method for nonsmooth convex consensus optimization. According to \cite{Wu19a}, when we apply P-EXTRA to solve \eqref{eq:equalitydualproblem}, it can be expressed in the following form:
\begin{align*}
\mathbf{u}(k+1)=& \operatorname{\arg\;\min}_{\mathbf{u}\in\mathbb{R}^{n(\tilde{m}+\tilde{p})}} -D_e(\mathbf{u})+\langle\mathbf{u}, H^{\frac{1}{2}}\tilde{\mathbf{z}}(k)\rangle\displaybreak[0]\\
&+\frac{\rho}{2}\|\mathbf{u}-W\mathbf{u}(k)\|^2,\\
\tilde{\mathbf{z}}(k+1) =& \tilde{\mathbf{z}}(k)+\rho H^{\frac{1}{2}}\mathbf{u}(k+1).
\end{align*}
Here, $\mathbf{u}(0)\in\mathbb{R}^{n(\tilde{m}+\tilde{p})}$ is arbitrarily selected and $\tilde{\mathbf{z}}(0)=\rho H^{\frac{1}{2}}\mathbf{u}(0)$. In addition, $\rho>0$, $W=P^W\otimes I_{\tilde{m}+\tilde{p}}$, and $H=P^H\otimes I_{\tilde{m}+\tilde{p}}$, where $P^W,P^H\in \mathbb{R}^{n\times n}$ satisfy the assumption below.

\begin{assumption}\label{asm:weightmatrix}
The matrices $P^W$, $P^H$ satisfy the following:
\begin{enumerate}[(a)]
\item $[P^W]_{ij}=[P^H]_{ij}=0$, $\forall i\in \mathcal{V}$, $\forall j\notin \mathcal{N}_i\cup\{i\}$.
\item $P^W$ and $P^H$ are symmetric and positive semidefinite.
\item $P^W\mathbf{1}_n=\mathbf{1}_n$, $\operatorname{Null}(P^H)=\operatorname{span}(\mathbf{1}_n)$.
\item $P^W+P^H\preceq I_n$.
\end{enumerate}
\end{assumption}

Assumption \ref{asm:weightmatrix}(a) is used to enable distributed implementation and Assumption \ref{asm:weightmatrix}(b)--(d) are for ensuring the convergence of P-EXTRA. The connectivity of the graph $\mathcal{G}$ makes Assumption~\ref{asm:weightmatrix} easily hold. For example, we may let $P^W=\frac{I_n+P'}{2}$ and $P^H=\frac{I_n-P'}{2}$ for some $P'\in \mathbb{R}^{n\times n}$ such that $[P']_{ij}=[P']_{ji}>0$ $\forall \{i,j\}\in \mathcal{E}$, $[P']_{ii}=1-\sum_{j\in \mathcal{N}_i} [P']_{ij}>0$ $\forall i\in \mathcal{V}$, and $[P']_{ij}=0$ otherwise. Assumption~\ref{asm:weightmatrix} also yields the following properties of $W$ and $H$:
\begin{align}
&W=W^T\succeq \mathbf{O},\;H=H^T\succeq \mathbf{O},\label{eq:WHsymmetricity}\displaybreak[0]\\
&W(\mathbf{1}_n\otimes I_{\tilde{m}+\tilde{p}})=\mathbf{1}_n\otimes I_{\tilde{m}+\tilde{p}},\label{eq:W1is1}\displaybreak[0]\\
&H(\mathbf{1}_n\otimes I_{\tilde{m}+\tilde{p}})=\mathbf{O},\label{eq:H1=0}\displaybreak[0]\\
&\operatorname{Range}(H)\!=\!\{\mathbf{y}\in \mathbb{R}^{n(\tilde{m}+\tilde{p})}|(\mathbf{1}_n\otimes I_{\tilde{m}+\tilde{p}})^T\mathbf{y}\!=\!\mathbf{0}\},\label{eq:rangeH}\displaybreak[0]\\
&W+H\preceq I_{n(\tilde{m}+\tilde{p})}.\label{eq:WHsumsmallerthanid}
\end{align}

For the purpose of distributed design later, we further set $\mathbf{z}(k)=H^{\frac{1}{2}}\tilde{\mathbf{z}}(k)$ $\forall k\ge 0$, leading to
\begin{align}
\mathbf{u}(k+1)=& \operatorname{\arg\;\min}_{\mathbf{u}\in \mathbb{R}^{n(\tilde{m}+\tilde{p})}} -D_e(\mathbf{u})+\langle\mathbf{u}, \mathbf{z}(k)\rangle\nonumber\displaybreak[0]\\
&+\frac{\rho}{2}\|\mathbf{u}-W\mathbf{u}(k)\|^2,\label{eq:pextrauupdate}\\
\mathbf{z}(k+1) =& \mathbf{z}(k)+\rho H\mathbf{u}(k+1),\label{eq:pextrazupdate}
\end{align}
where $\mathbf{z}(0)=\rho H\mathbf{u}(0)$. 

Nevertheless, neither P-EXTRA nor its equivalent form \eqref{eq:pextrauupdate}--\eqref{eq:pextrazupdate} can be applied to solve \eqref{eq:equalitydualproblem} because the dual function $D_e$ generally cannot be expressed in closed form. To overcome this, we propose an implementable approximation of P-EXTRA for solving \eqref{eq:equalitydualproblem}. 

First of all, with the same initialization and parameter settings as P-EXTRA described above, for each $k\ge0$, let
\begin{align}
\mathbf{y}(k+1)\in&\operatorname{\arg\;\min}_{\mathbf{y}\in\mathbb{R}^N} \Phi(\mathbf{y})+\frac{1}{2\rho}\|\mathbf{B}\mathbf{y}-\mathbf{c}\|^2\nonumber\displaybreak[0]\\
&+\langle W\mathbf{u}(k)-\frac{1}{\rho}\mathbf{z}(k), \mathbf{B}\mathbf{y}-\mathbf{c}\rangle,\label{eq:originalyintheeqalg}\\
\mathbf{u}(k+1)=&W\mathbf{u}(k)+\frac{1}{\rho}(\mathbf{B}\mathbf{y}(k+1)-\mathbf{c}-\mathbf{z}(k)),\label{eq:relationofdualandprimal}
\end{align}
and $\mathbf{z}(k)$ be updated as \eqref{eq:pextrazupdate}. In fact, the method described by \eqref{eq:originalyintheeqalg}, \eqref{eq:relationofdualandprimal}, and \eqref{eq:pextrazupdate} is equivalent to P-EXTRA described by \eqref{eq:pextrauupdate}--\eqref{eq:pextrazupdate}, provided that $\mathbf{y}(k+1)$ in \eqref{eq:originalyintheeqalg} is well-defined. 

Below, we verify such an equivalent relation. To do so, we tentatively assume $\mathbf{y}(k+1)$ in \eqref{eq:originalyintheeqalg} exists, and will address this issue shortly. Note that \eqref{eq:originalyintheeqalg} is identical to
\begin{equation*}
	-\mathbf{B}^T(W\mathbf{u}(k)+\frac{1}{\rho}(\mathbf{B}\mathbf{y}(k+1)-\mathbf{c}-\mathbf{z}(k)))\in \partial \Phi(\mathbf{y}(k+1)).
\end{equation*}
Then, substituting \eqref{eq:relationofdualandprimal} into the above gives
\begin{equation*}
	-\mathbf{B}^T\mathbf{u}(k+1) \in \partial \Phi(\mathbf{y}(k+1)),
\end{equation*}
which indicates
\begin{equation}\label{eq:yk1isminimizer}
\mathbf{y}(k+1)\in \operatorname{\arg\;\min}_{\mathbf{y}\in \mathbb{R}^{N}} \Phi(\mathbf{y})+\langle \mathbf{u}(k+1), \mathbf{B}\mathbf{y}-\mathbf{c}\rangle.
\end{equation}
In addition, \eqref{eq:yk1isminimizer} implies $\mathbf{B}\mathbf{y}(k+1)-\mathbf{c}\in\partial D_e(\mathbf{u}(k+1))$ \cite{Bertsekas99}. On the other hand, \eqref{eq:relationofdualandprimal} results in $\mathbf{B}\mathbf{y}(k+1)-\mathbf{c} = \mathbf{z}(k)+\rho(\mathbf{u}(k+1)-W\mathbf{u}(k))$. Consequently, 
\begin{align*}
\mathbf{z}(k)+\rho(\mathbf{u}(k+1)-W\mathbf{u}(k))\in \partial D_e(\mathbf{u}(k+1)), 
\end{align*}
which is equivalent to \eqref{eq:pextrauupdate}. Therefore, $(\mathbf{z}(k),\mathbf{u}(k))$ generated by \eqref{eq:originalyintheeqalg}, \eqref{eq:relationofdualandprimal}, and \eqref{eq:pextrazupdate} is exactly the same as $(\mathbf{z}(k),\mathbf{u}(k))$ generated by \eqref{eq:pextrauupdate}--\eqref{eq:pextrazupdate}, yet \eqref{eq:originalyintheeqalg}, \eqref{eq:relationofdualandprimal}, and \eqref{eq:pextrazupdate} do not rely on $D_e$ and thus do not require that $D_e$ be in closed form. 

Apart from that, the additional variable $\mathbf{y}(k)$ updated as in \eqref{eq:originalyintheeqalg} is more than an auxiliary variable. It can also be used as an online approximate primal solution to problem~\eqref{eq:equalityprob}, while \eqref{eq:pextrauupdate}--\eqref{eq:pextrazupdate} cannot provide such a real-time primal approximation. To see this, note that for any optimum $\mathbf{u}^\star$ of \eqref{eq:equalitydualproblem}, $\mathbf{y}^\star\in \operatorname{\arg\;\min}_{\mathbf{y}} \Phi(\mathbf{y})+\langle\mathbf{u}^\star, \mathbf{B}\mathbf{y}-\mathbf{c}\rangle$ is an optimum of \eqref{eq:equalityprob}. Thus, from \eqref{eq:yk1isminimizer}, $\mathbf{y}(k)$ is able to converge to a primal optimum of \eqref{eq:equalityprob}, as long as $\mathbf{u}(k)$ converges to a dual optimum. Since $\mathbf{u}(k)$ results from P-EXTRA, it is guaranteed to converge to an optimum of \eqref{eq:equalitydualproblem} under proper conditions \cite{ShiW15a}.

It remains to address the issue that \eqref{eq:originalyintheeqalg} may not be well-posed as the convex optimization problem on its right-hand side may not have an optimal solution. To this end, we modify \eqref{eq:originalyintheeqalg} by adding a proximal term $\frac{\alpha_1}{2}\|\mathbf{y}-\mathbf{y}(k)\|^2$, $\alpha_1>0$ to its right-hand side, which gives
\begin{equation}\label{eq:yintheeqalg}
\mathbf{y}(k+1)=\operatorname{\arg\;\min}_{\mathbf{y}\in\mathbb{R}^N} \Phi(\mathbf{y})+\Delta_2^k(\mathbf{y}),
\end{equation}
where $\mathbf{y}(0)$ can be arbitrary and $\Delta_2^k(\mathbf{y}):=\frac{1}{2\rho}\|\mathbf{B}\mathbf{y}-\mathbf{c}\|^2+\langle W\mathbf{u}(k)-\frac{1}{\rho}\mathbf{z}(k), \mathbf{B}\mathbf{y}-\mathbf{c}\rangle+\frac{\alpha_1}{2}\|\mathbf{y}-\mathbf{y}(k)\|^2$. This ensures the unique existence of $\mathbf{y}(k+1)$.

The above proximal algorithm originating from P-EXTRA for solving the dual problem of \eqref{eq:equalityprob} can be summarized as follows: Given any $\mathbf{u}(0)\in\mathbb{R}^{n(\tilde{m}+\tilde{p})}$ and $\mathbf{y}(0)\in\mathbb{R}^N$, we set $\mathbf{z}(0)=\rho H\mathbf{u}(0)$ and then successively update $\mathbf{y}(k+1)$, $\mathbf{u}(k+1)$, and $\mathbf{z}(k+1)$ for each $k\ge0$ according to \eqref{eq:yintheeqalg}, \eqref{eq:relationofdualandprimal}, and \eqref{eq:pextrazupdate}, respectively. The entire process can be executed in a fully distributed way over the graph $\mathcal{G}$, which will be discussed in Section~\ref{ssec:distributed}.

\subsection{Virtual-Queue Algorithm for Sparse Inequality Constraints}\label{ssec:virtualqueue}

Now we only keep the inequality constraint for problem~\eqref{eq:originalprobleminy}:
\begin{equation}\label{eq:inequalityprobiny}
\begin{split}
\underset{\mathbf{y}\in\mathbb{R}^N}{\operatorname{minimize}}~~&~ \Phi(\mathbf{y})\\
\operatorname{subject~to} ~&~ \mathbf{G}(\mathbf{y})\le\mathbf{0}_{n\tilde{p}+\underset{{i\in\mathcal{V}^{\operatorname{in}}}}{\sum} p_i}.
\end{split}
\end{equation}
We may utilize the virtual-queue-based algorithm \cite{YuH17} to solve \eqref{eq:inequalityprobiny}. The virtual-queue-based algorithm can be viewed as a modification of the dual subgradient algorithm, with an additional proximal term in the primal update and an approximate dual update that resembles a queuing equation. It takes the following form when applied to problem~\eqref{eq:inequalityprobiny}: Arbitrarily pick $\mathbf{y}(0)\in\operatorname{dom}(h)$ and set $\mathbf{q}(0) = \max\{-\mathbf{G}(\mathbf{y}(0)),\mathbf{0}_{n\tilde{p}+\sum_{{i\in\mathcal{V}^{\operatorname{in}}}} p_i}\}$. Then, for any $k\ge 0$,
\begin{align}
\!\!\mathbf{y}(k\!+\!1) \!=&\! \operatorname{\arg\;\min}_{\mathbf{y}\in \mathbb{R}^N} \Phi(\mathbf{y})+\Delta_3^k(\mathbf{y}),\label{eq:virtualqueueiny}\\
\!\!\mathbf{q}(k\!+\!1) \!=& \max\{-\mathbf{G}(\mathbf{y}(k+1)), \mathbf{q}(k)\!+\!\mathbf{G}(\mathbf{y}(k+1))\},\label{eq:virtualqueueq}
\end{align}
where $\Delta_3^k(\mathbf{y}):=\langle \mathbf{q}(k)+\mathbf{G}(\mathbf{y}(k)),\mathbf{G}(\mathbf{y})\rangle+\frac{\alpha_2}{2}\|\mathbf{y}-\mathbf{y}(k)\|^2$, $\alpha_2>0$. 

Recall from Section~\ref{ssec:probtrans} that the inequality constraint in \eqref{eq:inequalityprobiny} is sparse, which would enable the distributed implementation of the above updates (cf. Section~\ref{ssec:distributed}). Indeed, although the virtual-queue-based algorithm is targeted at inequality-constrained convex problems, it can also deal with equality constraints by means of transforming each equality constraint into two inequalities. However, this would significantly increase the number of constraints and thus the algorithm complexity. Moreover, the virtual-queue-based algorithm would rely on centralized coordination when it comes to the dense constraints in problem~\eqref{eq:originalprobleminy}. 

\subsection{Integrated Primal-Dual Proximal Algorithm}\label{ssec:alg}

The three constraints in problem~\eqref{eq:originalprobleminy} are considered separately so far---We have proposed two new proximal algorithms in Sections~\ref{ssec:proximalsparse} and~\ref{ssec:proximaldense}, and presented the virtual-queue-based algorithm \cite{YuH17} in Section~\ref{ssec:virtualqueue}. Each of them is primal-dual by nature and copes with only one constraint in \eqref{eq:originalprobleminy}. 

To unite these three primal-dual algorithms in order to solve \eqref{eq:originalprobleminy}, we keep their variables $\mathbf{v}(k)$, $\mathbf{u}(k)$, $\mathbf{z}(k)$, and $\mathbf{q}(k)$, which are updated via \eqref{eq:sparsetildevupdate}, \eqref{eq:relationofdualandprimal}, \eqref{eq:pextrazupdate}, and \eqref{eq:virtualqueueq}, respectively. Their initializations are also the same as those given in Sections~\ref{ssec:proximalsparse}, \ref{ssec:proximaldense}, and \ref{ssec:virtualqueue}. Then, note that these algorithms have a common primal variable $\mathbf{y}(k)$, yet they update $\mathbf{y}(k)$ differently via \eqref{eq:sparseyupdatetildev}, \eqref{eq:yintheeqalg}, and \eqref{eq:virtualqueueiny}.

Below, we unify these different updates of $\mathbf{y}(k)$. Note that \eqref{eq:sparseyupdatetildev}, \eqref{eq:yintheeqalg}, and \eqref{eq:virtualqueueiny} all involve minimizing the objective function $\Phi(\mathbf{y})$ of \eqref{eq:originalprobleminy} plus a few other terms denoted by $\Delta_i^k(\mathbf{y})$, $i=1,2,3$. Thus, we let $\mathbf{y}(0)$ be any vector in $\operatorname{dom}(h)$ and 
\begin{align}
\mathbf{y}(k+1) = \operatorname{\arg\;\min}_{\mathbf{y}\in \mathbb{R}^N}~\Phi(\mathbf{y})+\Delta^k(\mathbf{y}),\quad\forall k\ge0,\label{eq:proxfrogyupdate}
\end{align}
where $\Delta^k(\mathbf{y})=\Delta_1^k(\mathbf{y})+\Delta—_2^k(\mathbf{y})+\Delta_3^k(\mathbf{y})$. Note that \eqref{eq:proxfrogyupdate} is able to uniquely determine $\mathbf{y}(k+1)$ given the last iterates $\mathbf{y}(k)$, $\mathbf{v}(k)$, $\mathbf{u}(k)$, $\mathbf{z}(k)$, and $\mathbf{q}(k)$.

Furthermore, in order to reduce the computational complexity of \eqref{eq:proxfrogyupdate}, we replace the smooth component $f(\mathbf{y})$ in $\Phi(\mathbf{y})$ with the sum of its first-order approximation at $\mathbf{y}(k)$ (i.e., $f(\mathbf{y}(k))+\langle \nabla f(\mathbf{y}(k)), \mathbf{y}-\mathbf{y}(k)\rangle$) and a proximal term $\frac{\alpha_3}{2}\|\mathbf{y}-\mathbf{y}(k)\|^2$, $\alpha_3>0$, so that \eqref{eq:proxfrogyupdate} is modified to
\begin{equation}\label{eq:finalupdatey}
\begin{split}
&\mathbf{y}(k+1)=\operatorname{\arg\;\min}_{\mathbf{y}\in\mathbb{R}^N} \mathcal{L}^k(\mathbf{y}),\quad\forall k\ge0,
\end{split}
\end{equation}
where $\mathcal{L}^k(\mathbf{y})=\langle \nabla f(\mathbf{y}(k)), \mathbf{y}-\mathbf{y}(k)\rangle+h(\mathbf{y})+\Delta^k(\mathbf{y})+\frac{\alpha_3}{2}\|\mathbf{y}-\mathbf{y}(k)\|^2$. Note that \eqref{eq:finalupdatey} ensures $\mathbf{y}(k+1)\in\operatorname{dom}(h)$. Compared to \eqref{eq:proxfrogyupdate}, the primal update \eqref{eq:finalupdatey} is less computationally costly, especially when $h(\mathbf{y})\equiv 0$ and $\mathbf{G}(\mathbf{y})$ is simple.

Combining the above, we construct the following \emph{\uppercase{i}ntegrated \uppercase{p}rima\uppercase{l}-d\uppercase{u}al pro\uppercase{x}imal algorithm}, referred to as IPLUX: Given arbitrary $\mathbf{y}(0)\in\operatorname{dom}(h)$ and $\mathbf{u}(0)\in\mathbb{R}^{n(\tilde{m}+\tilde{p})}$, we initialize $\mathbf{v}(0)\in \operatorname{Range}((\mathbf{B}^s)^T)$, $\mathbf{z}(0)=\rho H\mathbf{u}(0)$, and $\mathbf{q}(0) = \max\{-\mathbf{G}(\mathbf{y}(0)),\mathbf{0}_{n\tilde{p}+\sum_{{i\in\mathcal{V}^{\operatorname{in}}}} p_i}\}$. Subsequently, for each $k\ge0$, we compute $\mathbf{y}(k+1)$, $\mathbf{v}(k+1)$, $\mathbf{u}(k+1)$, $\mathbf{z}(k+1)$, and $\mathbf{q}(k+1)$ according to \eqref{eq:finalupdatey}, \eqref{eq:sparsetildevupdate}, \eqref{eq:relationofdualandprimal}, \eqref{eq:pextrazupdate}, and \eqref{eq:virtualqueueq} with $\gamma,\lambda,\rho,\alpha_1,\alpha_2,\alpha_3>0$.

\begin{rem}
If problem~\eqref{eq:prob} does not contain the sparse equality constraint (2d), we may simply remove $\Delta_1^k(\mathbf{y})$ from $\mathcal{L}^k(\mathbf{y})$ in \eqref{eq:finalupdatey} and remove $\mathbf{v}(k)$ from IPLUX. When the dense constraints (2a) and (2b) do not exist, then $\Delta_2^k(\mathbf{y})$ and the variables $\mathbf{u}(k)$ and $\mathbf{z}(k)$ can be eliminated. Once the inequality constraints (2a) and (2c) are absent, then $\Delta_3^k(\mathbf{y})$ and $\mathbf{q}(k)$ are unnecessary.
\end{rem}

\begin{rem}\label{rem:dimension}
The dimensions of the variables in IPLUX add up to $2n\tilde{m}+5n\tilde{p}+2\sum_{i\in\mathcal{V}}d_i+\sum_{i\in\mathcal{V}^{\operatorname{in}}}p_i$. If we do not apply the constraint separation in Section~\ref{ssec:separation} to problem~\eqref{eq:proboriginal}, namely, all the sparse constraints in \eqref{eq:prob} are treated as dense constraints, then the total variable dimension of IPLUX becomes $2n\tilde{m}+4n\tilde{p}+\sum_{i\in\mathcal{V}}d_i+2n\sum_{i\in\mathcal{V}^{\operatorname{eq}}}m_i+4n\sum_{i\in\mathcal{V}^{\operatorname{in}}}p_i$. Therefore, the constraint separation reduces the variable dimension of IPLUX once $2n\sum_{i\in\mathcal{V}^{\operatorname{eq}}}m_i+(4n-1)\sum_{i\in\mathcal{V}^{\operatorname{in}}}p_i-n\tilde{p}-\sum_{i\in\mathcal{V}}d_i>0$, which implies that the constraints in \eqref{eq:proboriginal} contain plenty of sparsity and the decision variables are of moderate sizes relative to the number of sparse constraints.
\end{rem}

\subsection{Distributed Implementation}\label{ssec:distributed}

Although the dynamics of IPLUX have been provided in Section~\ref{ssec:alg}, the distributed implementation of IPLUX is not straightforward yet and will be discussed in this subsection. 

We first partition the variables of IPLUX as follows:
\begin{align*}
&\mathbf{y}(k)=[y_1^T(k),\ldots,y_n^T(k)]^T,\displaybreak[0]\\
&\mathbf{v}(k)=[v_1^T(k),\ldots,v_n^T(k)]^T,\;\mathbf{u}(k)=[u_1^T(k),\ldots,u_n^T(k)]^T,\displaybreak[0]\\
&\mathbf{z}(k)=[z_1^T(k),\ldots,z_n^T(k)]^T,\;\mathbf{q}(k)=[q_1^T(k),\ldots,q_n^T(k)]^T.
\end{align*}
Here, for each $i\in\mathcal{V}$, $y_i(k)\in \mathbb{R}^{d_i+\tilde{p}}$, $v_i(k)\in \mathbb{R}^{d_i+\tilde{p}}$, $u_i(k)\in \mathbb{R}^{\tilde{m}+\tilde{p}}$, and $z_i(k)\in \mathbb{R}^{\tilde{m}+\tilde{p}}$. Additionally, $q_i(k)\in \mathbb{R}^{p_i+\tilde{p}}$ if $i\in\mathcal{V}^{\operatorname{in}}$ and $q_i(k)\in \mathbb{R}^{\tilde{p}}$ otherwise. 

We allocate the variables $y_i(k)$, $v_i(k)$, $u_i(k)$, $z_i(k)$, and $q_i(k)$ to each node $i\in\mathcal{V}$. Similar to \eqref{eq:yi=xiti}, we write each $y_i(k)$ as $y_i(k)=[x_i^T(k),t_i^T(k)]^T$, with $x_i(k)\in\mathbb{R}^{d_i}$ and $t_i(k)\in\mathbb{R}^{\tilde{p}}$. Then, we further partition $v_i(k)$, $u_i(k)$, and $z_i(k)$ as follows: $v_i(k)=[(v_i^x(k))^T, (v_i^t(k))^T]^T$, $u_i(k)=[(u_i^x(k))^T, (u_i^t(k))^T]^T$, and $z_i(k)=[(z_i^x(k))^T, (z_i^t(k))^T]^T$. It will be shown shortly that $v_i^x(k)\in\mathbb{R}^{d_i}$, $u_i^x(k)\in\mathbb{R}^{\tilde{m}}$, and $z_i^x(k)\in\mathbb{R}^{\tilde{m}}$ are associated with $x_i(k)$, and that $v_i^t(k)\in\mathbb{R}^{\tilde{p}}$, $u_i^t(k)\in\mathbb{R}^{\tilde{p}}$, and $z_i^t(k)\in\mathbb{R}^{\tilde{p}}$ are associated with $t_i(k)$. In addition, for all $i\in\mathcal{V}^{\operatorname{in}}$, we let $q_i(k) = [(q_i'(k))^T, (q_i''(k))^T]^T$, with $q_i'(k)\in\mathbb{R}^{\tilde{p}}$ and $q_i''(k)\in\mathbb{R}^{p_i}$. For each $i\in\mathcal{V}-\mathcal{V}^{\operatorname{in}}$, we set $q_i(k)=q_i'(k)\in\mathbb{R}^{\tilde{p}}$ for convenience.

In addition to the above local variables, we let each node $i\in\mathcal{V}$ maintain two auxiliary variables defined as
\begin{align}
&r_i(k)=\sum_{j\in\mathcal{V}^{\operatorname{eq}}: i\in S_j^{\operatorname{eq}}} (A_{ji}^s)^T\Bigl(\bigl(\sum_{\ell\in S_j^{\operatorname{eq}}} A_{j\ell}^sx_\ell(k)\bigr)-b_j^s\Bigr),\label{eq:ri}\displaybreak[0]\\
&s_i(k)=G_i(\mathbf{y}(k)),\quad\forall k\ge0.\label{eq:si}
\end{align} 
These two variables will play a role in decentralizedly updating $y_i(k)$, $v_i(k)$, and $q_i(k)$. Note that if node $i$ does not belong to any $S_j^{\operatorname{eq}}$ $\forall j\in\mathcal{V}^{\operatorname{eq}}$, then $r_i(k)\equiv\mathbf{0}_{d_i}$, which means node $i$ does not need to update $r_i(k)$. We keep $r_i(k)$ for every node $i$ for the sake of simplifying the notation. Moreover, similar to $q_i(k)$, we write $s_i(k) = [(s_i'(k))^T, (s_i''(k))^T]^T$ for each $i\in\mathcal{V}^{\operatorname{in}}$, where $s_i'(k)\in\mathbb{R}^{\tilde{p}}$ and $s_i''(k)\in\mathbb{R}^{p_i}$, and set $s_i(k)=s_i'(k)\in\mathbb{R}^{\tilde{p}}$ for all $i\in\mathcal{V}-\mathcal{V}^{\operatorname{in}}$.

In the sequel, we investigate the initializations and updates of each node $i$'s local variables $y_i(k)$, $v_i(k)$, $u_i(k)$, $z_i(k)$, $q_i(k)$, $r_i(k)$, and $s_i(k)$ over the graph $\mathcal{G}$ in Section~\ref{ssec:graph}. 

Table~\ref{table:distributedcalculation} presents the distributed updates of $r_i(k)$ and $s_i(k)$. It can be seen that (r1), (r2), and (s2) only require a subset of the nodes to interact with part of their neighbors. For instance, according to (s2), node $i\in\mathcal{V}$ interacts with the nodes in the set $\{\ell\in\mathcal{V}^{\operatorname{in}}-\{i\}|~i\in S_\ell^{\operatorname{in}}\}$. This set is a subset of $\mathcal{N}_i$ and is possibly empty. If the set is empty, then node $i$ does not need to send out anything, but it will receive information from some of its neighbors when $S_i^{\operatorname{in}}-\{i\}\subseteq\mathcal{N}_i$ is nonempty.

\begin{table}[ht]
	\centering
	\caption{Distributed updates of $r_i(k)$ and $s_i(k)$.}
	\label{table:distributedcalculation}
	\bgroup
	\def\arraystretch{1.5}
	\begin{tabular}{|p{0.45\textwidth}|}
	\hline \textbf{Module of updating $r_i(k)$ $\forall i\in\mathcal{V}$ $\forall k\ge0$}:
		\begin{enumerate}[(r1)]
			\item Each node $i\in\mathcal{V}$ sends $A_{ji}^sx_i(k)$ to every node $j\in\{\ell \in\mathcal{V}^{\operatorname{eq}}-\{i\}|~i\in S_\ell^{\operatorname{eq}}\}\subseteq\mathcal{N}_i$.
			\item Each node $i\in\mathcal{V}^{\operatorname{eq}}$ computes $\tilde{r}_i^k:=(\sum_{j\in S_i^{\operatorname{eq}}} A_{ij}^sx_j(k))-b_i^s$ and sends $\tilde{r}_i^k$ to every node $j\in S_i^{\operatorname{eq}}-\{i\}\subseteq\mathcal{N}_i$.
			\item Each node $i\in \mathcal{V}$ sets $r_i(k)=\sum_{j\in\mathcal{V}^{\operatorname{eq}}: i\in S_j^{\operatorname{eq}}} (A_{ji}^s)^T\tilde{r}_j^k$.
		\end{enumerate}\\
		\hline \textbf{Module of updating $s_i(k)$ $\forall i\in\mathcal{V}$ $\forall k\ge0$}:
		\begin{enumerate}[(s1)]
		  \item Each node $i\in\mathcal{V}$ sets $s_i'(k)=g_i(x_i(k))-t_i(k)$.
			\item Each node $i\in\mathcal{V}$ sends $g_{ji}^s(x_i(k))$ to every node $j\in\{\ell\in\mathcal{V}^{\operatorname{in}}-\{i\}|~i\in S_\ell^{\operatorname{in}}\}\subseteq\mathcal{N}_i$.
			\item Each node $i\in\mathcal{V}^{\operatorname{in}}$ sets $s_i''(k)=\sum_{j\in S_i^{\operatorname{in}}} g_{ij}^s(x_j(k))$.	  
		\end{enumerate}\\
	\hline
	\end{tabular}
	\egroup
\end{table}

Subsequently, we discuss the initializations of the remaining variables. Recall from Section~\ref{ssec:alg} that we allow $\mathbf{u}(0)$ to be arbitrary, and require $\mathbf{y}(0)\in\operatorname{dom}(h)$, $\mathbf{v}(0)\in\operatorname{Range}((\mathbf{B}^s)^T)$, $\mathbf{z}(0)=\rho H\mathbf{u}(0)$, and $\mathbf{q}(0)=\max\{-\mathbf{G}(\mathbf{y}(0)),\mathbf{0}\}$. To realize such initializations, we arbitrarily pick $u_i(0)\in \mathbb{R}^{\tilde{m}+\tilde{p}}$, $x_i(0)\in\operatorname{dom}(h_i)$, and $t_i(0)\in\mathbb{R}^{\tilde{p}}$. Besides, we simply set $v_i(0)=\mathbf{0}_{d_i+\tilde{p}}$ $\forall i\in\mathcal{V}$, so that $\mathbf{v}(0)=\mathbf{0}_N$ is guaranteed to be in $\operatorname{Range}((\mathbf{B}^s)^T)$. Additionally, due to the structure of $H$ in Section~\ref{ssec:proximaldense}, $\mathbf{z}(0)=\rho H\mathbf{u}(0)$ is equivalent to $z_i(0)=\rho\sum_{j\in\mathcal{N}_i\cup\{i\}} [P^H]_{ij}u_j(0)$ $\forall i\in\mathcal{V}$, which can be initialized by letting each node $i$ exchange $u_i(0)$ with all the neighbors\footnote{Since $u_i(0)$ can be arbitrary, if we choose $u_i(0)=\mathbf{0}_{\tilde{m}+\tilde{p}}$ $\forall i\in\mathcal{V}$, then each node $i$ can directly set $z_i(0)=\mathbf{0}_{\tilde{m}+\tilde{p}}$ by default.}. Lastly, $\mathbf{q}(0)$ can be found by the nodes in parallel via $q_i(0) = \max\{-s_i(0), \mathbf{0}\}$ $\forall i\in\mathcal{V}$.

To update each $v_i(k)$ through \eqref{eq:sparsetildevupdate}, note from \eqref{eq:ri} that
\begin{align}
&[(\mathbf{B}^s)^T(\mathbf{B}^s\mathbf{y}(k)-\mathbf{c}^s)]^T=[(r_1(k))^T, \mathbf{0}_{\tilde{p}}^T,\ldots,(r_n(k))^T, \mathbf{0}_{\tilde{p}}^T].\label{eq:BByc=r0r0}
\end{align}
This, along with \eqref{eq:sparsetildevupdate} and $v_i(0)=\mathbf{0}_{d_i+\tilde{p}}$, implies that 
\begin{align}
v_i^t(k)\equiv\mathbf{0}_{\tilde{p}},\quad\forall i\in\mathcal{V}.\label{eq:vit=0}
\end{align}
Hence, $v_i^t(k)$ takes no effect in the algorithm. Indeed, each node $i$ only needs to maintain $v_i^x(k)$ instead of the entire vector $v_i(k)$, which can be updated via
\begin{align}
v_i^x(k+1)=v_i^x(k)+\gamma r_i(k+1).\label{eq:vi1distributedimplementation}
\end{align}
In addition, note that \eqref{eq:relationofdualandprimal}, \eqref{eq:pextrazupdate}, and \eqref{eq:virtualqueueq} can be decomposed as follows: For each $i\in\mathcal{V}$,
\begin{align}
u_i(k+1)=& \frac{1}{\rho}(B_iy_i(k+1)\!-\!c_i\!-\!z_i(k))\!+\!\!\!\!\!\sum_{j\in \mathcal{N}_i\cup\{i\}}\!\!\!\!\! [P^W]_{ij}u_j(k),\label{eq:udistributedimplementation}\\
z_i(k+1)=& z_i(k)+\rho\sum_{j\in \mathcal{N}_i\cup\{i\}}[P^H]_{ij}u_j(k+1),\label{eq:zdistributedimplementation}\\
q_i(k+1)=& \max\{-s_i(k+1), q_i(k)+s_i(k+1)\}.\label{eq:Qdistributedimplementation}
\end{align}
From the above, the computations of $v_i^x(k+1)$ (or equivalently $v_i(k+1)$), $u_i(k+1)$, $z_i(k+1)$, and $q_i(k+1)$ are all local operations, which only require each node to acquire $u_j(k)$ and $u_j(k+1)$ from every neighbor $j\in\mathcal{N}_i$.

Finally, we decompose $\mathcal{L}^k(\mathbf{y})$ in \eqref{eq:finalupdatey} in order to describe the update of $y_i(k)=[x_i^T(k),t_i^T(k)]^T$ $\forall i\in\mathcal{V}$. By using \eqref{eq:si}, we obtain $\langle \mathbf{q}(k)+\mathbf{G}(\mathbf{y}(k)), \mathbf{G}(\mathbf{y})\rangle=\sum_{i\in \mathcal{V}}\langle q_i'(k)+s_i'(k), g_i(x_i)-t_i\rangle+\sum_{i\in\mathcal{V}^{\operatorname{in}}}\langle q_i''(k)+s_i''(k),\sum_{j\in S_i^{\operatorname{in}}}g_{ij}^s(x_j)\rangle=\sum_{i\in \mathcal{V}}\Bigl(\langle q_i'(k)+s_i'(k), g_i(x_i)-t_i\rangle+\sum_{j\in\mathcal{V}^{\operatorname{in}}:i\in S_j^{\operatorname{in}}}\langle q_j''(k)+s_j''(k),g_{ji}^s(x_i)\rangle\Bigr).$ It then follows from \eqref{eq:BByc=r0r0} and \eqref{eq:vit=0} that $\mathcal{L}^k(\mathbf{y})$ can be decomposed as
\begin{align*}
\mathcal{L}^k(\mathbf{y})=\sum_{i\in \mathcal{V}} (\mathcal{L}_i^k(x_i)+\tilde{\mathcal{L}}_i^k(t_i)).
\end{align*}
For each $k\ge0$ and $i\in\mathcal{V}$, $\mathcal{L}_i^k(x_i)$ is a function of $x_i$ and $\tilde{\mathcal{L}}_i^k(t_i)$ is a function of $t_i$, which depend on the iterates of node $i$ and its neighbors at iteration $k$. Specifically, $\mathcal{L}_i^k(x_i)=\langle\nabla f_i(x_i(k)),x_i-x_i(k)\rangle+h_i(x_i)+\langle v_i^x(k),x_i\rangle+\frac{\gamma\lambda^2}{2}\|x_i-x_i(k)+\frac{1}{\lambda^2}r_i(k)\|^2+\frac{1}{2\rho}\|A_ix_i-b_i\|^2+\langle\bigl(\sum_{j\in\mathcal{N}_i\cup\{i\}}[P^W]_{ij}u_j^x(k)\bigr)-\frac{1}{\rho}z_i^x(k),A_ix_i-b_i\rangle+\langle q_i'(k)+s_i'(k),g_i(x_i)\rangle+\Bigl(\sum_{j\in\mathcal{V}^{\operatorname{in}}:i\in S_j^{\operatorname{in}}}\langle q_j''(k)+s_j''(k),g_{ji}^s(x_i)\rangle\Bigr)+\frac{\alpha}{2}\|x_i-x_i(k)\|^2$ and $\tilde{\mathcal{L}}_i^k(t_i)=\frac{\gamma\lambda^2+\alpha}{2}\|t_i-t_i(k)\|^2+\frac{1}{2\rho}\|t_i\|^2+\langle\bigl(\sum_{j\in\mathcal{N}_i\cup\{i\}}[P^W]_{ij}u_j^t(k)\bigr)-\frac{1}{\rho}z_i^t(k),t_i\rangle-\langle q_i'(k)+s_i'(k),t_i\rangle$, where $\alpha=\alpha_1+\alpha_2+\alpha_3>0$ and $\rho,\gamma,\lambda>0$.

As a result, the update \eqref{eq:finalupdatey} of $\mathbf{y}(k)$ is equivalent to letting each node $i\in\mathcal{V}$ implement
\begin{align}
x_i(k+1)&= \operatorname{\arg\;\min}_{x_i\in \mathbb{R}^{d_i}} \mathcal{L}_i^k(x_i),\label{eq:updateofxi}\\
t_i(k+1)&= \operatorname{\arg\;\min}_{t_i\in \mathbb{R}^{\tilde{p}}} \tilde{\mathcal{L}}_i^k(t_i).\label{eq:timinimization}
\end{align}
Furthermore, since \eqref{eq:timinimization} involves solving a convex quadratic program, it can be identically expressed as
\begin{align}
&t_i(k+1)=\frac{1}{1/\rho+\gamma\lambda^2+\alpha}\label{eq:updateofti}\displaybreak[0]\\
&\cdot\!\!\Bigl(\!\!(\gamma\lambda^2\!\!+\!\alpha)t_i(k)\!-\!\!\bigl(\!\!\!\!\!\!\sum_{j\in\mathcal{N}_i\cup\{i\}}\!\!\!\!\!\!\![P^W]_{ij}u_j^t(k)\bigr)\!\!+\!\frac{1}{\rho}z_i^t(k)\!+\!q_i'(k)\!+\!s_i'(k)\!\Bigr).\nonumber
\end{align}
Observe from \eqref{eq:updateofxi} and \eqref{eq:updateofti} that each node $i$ is able to locally compute $x_i(k+1)$ and $t_i(k+1)$ by collecting 
$u_j(k)=[(u_j^x(k))^T,(u_j^t(k))^T]^T$ from every neighbor $j\in\mathcal{N}_i$ and $q_j''(k)+s_j''(k)$ from each selected neighbor $j\in\{\ell\in\mathcal{V}^{\operatorname{in}}-\{i\}|~i\in S_{\ell}^{\operatorname{in}}\}\subseteq\mathcal{N}_i$.

Combining the above, Algorithm~\ref{alg:IPLUX} describes the distributed implementation of IPLUX developed in Section~\ref{ssec:alg}. To execute Algorithm~\ref{alg:IPLUX}, all the nodes need to agree on the values of $\gamma,\lambda,\rho,\alpha>0$. Also, according to \cite{ShiW15a}, each node $i$ manages to locally determine $[P^W]_{ij}$ and $[P^H]_{ij}$ $\forall j\in\mathcal{N}_{i}\cup\{i\}$ such that Assumption~\ref{asm:weightmatrix} holds. 

{
	\renewcommand{\baselinestretch}{1.05}
	\begin{algorithm} [!htb]
		\caption{\small Distributed Implementation of IPLUX}
		\label{alg:IPLUX}
		\begin{algorithmic}[1]
			\small
			\STATE Each node $i\in\mathcal{V}$ maintains $x_i(k)$, $t_i(k)$, $v_i^x(k)$, $u_i(k)$, $z_i(k)$, $q_i(k)$, $r_i(k)$, and $s_i(k)$.
			\STATE \textbf{Initialization:}
			\STATE Each node $i\in\mathcal{V}$ sets $v_i^x(0)=\mathbf{0}_{d_i}$ and arbitrarily selects $x_i(0)\in\operatorname{dom}(h_i)$, $t_i(0)\in\mathbb{R}^{\tilde{p}}$, and $u_i(0)\in\mathbb{R}^{\tilde{m}+\tilde{p}}$.
			\STATE Each node $i\in\mathcal{V}$ sends $u_i(0)$ to all $j\in \mathcal{N}_i$.
			\STATE Each node $i\in\mathcal{V}$ sets $z_i(0)=\rho\sum_{j\in\mathcal{N}_i\cup\{i\}}[P^H]_{ij}u_j(0)$.
			\STATE Each node $i\in\mathcal{V}$ computes $r_i(0)$ and $s_i(0)$ according to Table~\ref{table:distributedcalculation}.
			\STATE Each node $i\in\mathcal{V}$ sets $q_i(0) = \max\{-s_i(0), \mathbf{0}\}$.
			\FOR{ $k=0,1,\ldots$}
			\STATE Each node $i\!\in\!\mathcal{V}^{\operatorname{in}}$ sends $q_i''(k)\!+\!s_i''(k)$ to all $j\!\in\! S_i^{\operatorname{in}}\!-\{i\}\!\subseteq\!\mathcal{N}_i$.
			\STATE Each node $i\in\mathcal{V}$ computes $x_i(k+1)$ and $t_i(k+1)$ according to \eqref{eq:updateofxi} and \eqref{eq:updateofti}, respectively.
			\STATE Each node $i\in\mathcal{V}$ computes $r_i(k+1)$ and $s_i(k+1)$ according to Table~\ref{table:distributedcalculation}.
			\STATE Each node $i\in \mathcal{V}$ computes $v_i^x(k+1)$, $u_i(k+1)$, and $q_i(k+1)$ according to \eqref{eq:vi1distributedimplementation}, \eqref{eq:udistributedimplementation}, and \eqref{eq:Qdistributedimplementation}, respectively.
			\STATE Each node $i\in\mathcal{V}$ sends $u_i(k+1)$ to all $j\in\mathcal{N}_i$.
			\STATE Each node $i\in \mathcal{V}$ computes $z_i(k+1)$ according to \eqref{eq:zdistributedimplementation}.
			\ENDFOR
		\end{algorithmic}
	\end{algorithm}
}

\begin{rem}
The three algorithms in Sections~\ref{ssec:proximalsparse}, \ref{ssec:proximaldense}, and \ref{ssec:virtualqueue} for solving special forms of problem~\eqref{eq:originalprobleminy} can be specialized from a variant of IPLUX with \eqref{eq:finalupdatey} replaced by \eqref{eq:proxfrogyupdate}. Since the distributed realization of \eqref{eq:proxfrogyupdate} is very close to that of \eqref{eq:finalupdatey}, these three algorithms are also distributed.
\end{rem}

\section{Convergence Analysis}\label{sec:convanal}

In this section, we analyze the convergence behavior of IPLUX under the following problem assumptions.

\begin{assumption}\label{asm:prob}
Problem \eqref{eq:prob} satisfies the following:
\begin{enumerate}[(a)]
\item Each $f_i(x_i)$ and $h_i(x_i)$, $i\in \mathcal{V}$ are convex on $\operatorname{dom}(h_i)$, where $\operatorname{dom}(h_i)$ is convex.
\item Each $f_i(x_i)$, $i\in \mathcal{V}$ is differentiable on $\operatorname{dom}(h_i)$ and its gradient $\nabla f_i(x_i)$ is Lipschitz continuous with Lipschitz constant $L_f>0$ on $\operatorname{dom}(h_i)$.
\item Each $g_i(x_i)$, $i\in \mathcal{V}$ is convex on $\operatorname{dom}(h_i)$ and Lipschitz continuous with Lipschitz constant $L_g>0$ on $\operatorname{dom}(h_i)$.
\item Each $g_{ij}^s(x_j)$, $i\in \mathcal{V}^{\operatorname{in}}$, $j\in S_i^{\operatorname{in}}$ is convex on $\operatorname{dom}(h_j)$ and Lipschitz continuous with Lipschitz constant $L_{g^s}>0$ on $\operatorname{dom}(h_j)$.
\item There exists at least one optimal solution $\mathbf{x}^\star=[(x_1^\star)^T, \ldots, (x_n^\star)^T]^T$ to problem \eqref{eq:prob}.
\item There exist $\tilde{x}_i\in \operatorname{rel\;int}(\operatorname{dom}(h_i))$ $\forall i\in \mathcal{V}$ such that $\sum_{i\in \mathcal{V}} g_i(\tilde{x}_i)<\mathbf{0}_{\tilde{p}}$, $\sum_{i\in \mathcal{V}} A_i\tilde{x}_i=\sum_{i\in \mathcal{V}}b_i$, $\sum_{j\in S_i^{\operatorname{in}}} g_{ij}^s(\tilde{x}_j)<\mathbf{0}_{p_i}$ $\forall i\in \mathcal{V}^{\operatorname{in}}$, and $\sum_{j\in S_i^{\operatorname{eq}}} A_{ij}^s\tilde{x}_j= b_i^s$ $\forall i\in \mathcal{V}^{\operatorname{eq}}$.
\end{enumerate}
\end{assumption}

Assumption \ref{asm:prob} allows each local objective function $f_i+h_i$ in problem~\eqref{eq:prob} to be a general convex function on $\operatorname{dom}(h_i)$. The Lipschitz continuity in Assumptions~\ref{asm:prob}(b)--(d) only needs to hold on the domains of the $h_i$'s. If the domains happen to be compact, then such Lipschitz continuity can be guaranteed by continuous differentiability. For simplicity, we let the Lipschitz constants $L_f,L_g,L_{g^s}$ be uniform for all the $\nabla f_i(x_i)$'s, $g_i(x_i)$'s, and $g_{ij}^s(x_j)$'s, respectively. Assumption~\ref{asm:prob}(e) ensures that problem~\eqref{eq:prob} is solvable. In addition, Assumption \ref{asm:prob}(f) is the Slater's condition, which is used to guarantee the strong duality between problem~\eqref{eq:prob} and its Lagrange dual \cite{Bertsekas99}. 

The following proposition shows that the equivalent form \eqref{eq:originalprobleminy} of problem~\eqref{eq:prob} enjoys several favorable properties owing to Assumption \ref{asm:prob}.

\begin{proposition}\label{prop:propertyofproby}
Suppose Assumption \ref{asm:prob} holds. Then, problem~\eqref{eq:originalprobleminy} has the following properties:
\begin{enumerate}[(a)]
\item $f(\mathbf{y})$, $h(\mathbf{y})$, and $\Phi(\mathbf{y})$ are convex on $\operatorname{dom}(h)$.
\item $\nabla f(\mathbf{y})$ is Lipschitz continuous with Lipschitz constant $L_f$ on $\operatorname{dom}(h)$.
\item $\mathbf{G}(\mathbf{y})$ is Lipschitz continuous with Lipschitz constant $L>0$ on $\operatorname{dom}(h)$, where $L:=\Bigl((\max_{i\in\mathcal{V}}\sum_{j\in\mathcal{V}^{\operatorname{in}}:i\in S_j^{\operatorname{in}}}|S_j^{\operatorname{in}}|)L_{g^s}^2+1+L_g^2\Bigr)^{\frac{1}{2}}$.
\item There exists at least one optimal solution $\mathbf{y}^\star\in\operatorname{dom}(h)$ to problem \eqref{eq:originalprobleminy}.
\item Strong duality holds between \eqref{eq:originalprobleminy} and its Lagrange dual.
\end{enumerate}
\end{proposition}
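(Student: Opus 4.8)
The plan is to dispatch the five claims in turn, exploiting the block structure $y_i=[x_i^T,t_i^T]^T$ together with the equivalence between \eqref{eq:prob} and \eqref{eq:originalprobleminy} established in Section~\ref{ssec:probtrans}. For (a), I would note that each $x_i$ is recovered from $\mathbf{y}$ by a fixed coordinate-selection (hence linear) map, so $f(\mathbf{y})=\sum_{i\in\mathcal{V}}f_i(x_i)$ and $h(\mathbf{y})=\sum_{i\in\mathcal{V}}h_i(x_i)$ are sums of convex functions composed with linear maps; convexity on $\operatorname{dom}(h)$ then follows from Assumption~\ref{asm:prob}(a), and $\Phi=f+h$ inherits it. For (b), since $f$ depends on $\mathbf{y}$ only through the $x_i$-blocks, the gradient block at $y_i$ is $[\nabla f_i(x_i)^T,\mathbf{0}_{\tilde{p}}^T]^T$, so I would compute $\|\nabla f(\mathbf{y})-\nabla f(\mathbf{y}')\|^2=\sum_{i\in\mathcal{V}}\|\nabla f_i(x_i)-\nabla f_i(x_i')\|^2\le L_f^2\sum_{i\in\mathcal{V}}\|x_i-x_i'\|^2\le L_f^2\|\mathbf{y}-\mathbf{y}'\|^2$, invoking Assumption~\ref{asm:prob}(b) and discarding the vanishing $t$-blocks; this yields the Lipschitz constant $L_f$.

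The technical heart of the proposition is (c), and it is the step I expect to demand the most care. I would split $\|\mathbf{G}(\mathbf{y})-\mathbf{G}(\mathbf{y}')\|^2$ into the contributions of the blocks $g_i(x_i)-t_i$ (for all $i\in\mathcal{V}$) and $\sum_{j\in S_i^{\operatorname{in}}}g_{ij}^s(x_j)$ (for $i\in\mathcal{V}^{\operatorname{in}}$). For each $i$, bounding $\|(g_i(x_i)-t_i)-(g_i(x_i')-t_i')\|\le L_g\|x_i-x_i'\|+\|t_i-t_i'\|$ and applying Cauchy--Schwarz to the right-hand side controls the first block by $(1+L_g^2)\|y_i-y_i'\|^2$, contributing $(1+L_g^2)\|\mathbf{y}-\mathbf{y}'\|^2$ after summing over $i$. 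For the second block I would apply Cauchy--Schwarz twice: once to pull $|S_i^{\operatorname{in}}|$ out of $\|\sum_{j\in S_i^{\operatorname{in}}}(g_{ij}^s(x_j)-g_{ij}^s(x_j'))\|^2$ together with the Lipschitz bound $L_{g^s}$, and then swap the order of summation to regroup by $j$ and bound the resulting weight $\sum_{i\in\mathcal{V}^{\operatorname{in}}:j\in S_i^{\operatorname{in}}}|S_i^{\operatorname{in}}|$ by its maximum over $j$. Summing the two contributions produces exactly $\|\mathbf{G}(\mathbf{y})-\mathbf{G}(\mathbf{y}')\|^2\le(ML_{g^s}^2+1+L_g^2)\|\mathbf{y}-\mathbf{y}'\|^2$ with $M=\max_{i\in\mathcal{V}}\sum_{j\in\mathcal{V}^{\operatorname{in}}:i\in S_j^{\operatorname{in}}}|S_j^{\operatorname{in}}|$, matching the stated $L$; the delicate point is getting the double-sum bookkeeping and index relabeling exactly right so that the constant is not inflated.

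Finally, for (d) and (e) I would transfer the hypotheses on \eqref{eq:prob} through the reformulation. Given the optimal $\mathbf{x}^\star$ from Assumption~\ref{asm:prob}(e), I would set $t_i^\star=g_i(x_i^\star)-\tfrac{1}{n}\sum_{j\in\mathcal{V}}g_j(x_j^\star)$, which satisfies $g_i(x_i^\star)\le t_i^\star$ (since $\sum_{j\in\mathcal{V}}g_j(x_j^\star)\le\mathbf{0}_{\tilde{p}}$) and $\sum_{i\in\mathcal{V}}t_i^\star=\mathbf{0}_{\tilde{p}}$; then $\mathbf{y}^\star=[\ldots,(x_i^\star)^T,(t_i^\star)^T,\ldots]^T$ is feasible for \eqref{eq:originalprobleminy}, lies in $\operatorname{dom}(h)$, and attains the common optimal value, giving (d). For (e) I would verify Slater's condition for \eqref{eq:originalprobleminy}: starting from the strictly-feasible $\tilde{x}_i$ of Assumption~\ref{asm:prob}(f) and setting $\tilde{t}_i=g_i(\tilde{x}_i)-\tfrac{1}{n}\sum_{j\in\mathcal{V}}g_j(\tilde{x}_j)$, one now has the strict inequality $g_i(\tilde{x}_i)<\tilde{t}_i$ because $\sum_{j\in\mathcal{V}}g_j(\tilde{x}_j)<\mathbf{0}_{\tilde{p}}$. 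The resulting point $\tilde{\mathbf{y}}$ lies in $\operatorname{rel\;int}(\operatorname{dom}(h))$—since $\operatorname{dom}(h)=\prod_{i\in\mathcal{V}}(\operatorname{dom}(h_i)\times\mathbb{R}^{\tilde{p}})$ and the $t$-blocks are unconstrained—satisfies $\mathbf{G}(\tilde{\mathbf{y}})<\mathbf{0}$, and meets both equality constraints; the existence of such a point yields strong duality by standard convex duality \cite{Bertsekas99}.
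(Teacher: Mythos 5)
Your proposal is correct and follows essentially the same route as the paper's proof: the same block decomposition and index-swap bookkeeping in (c) (your triangle-inequality-plus-Cauchy--Schwarz bound for the $g_i(x_i)-t_i$ blocks yields the identical constant $(1+L_g^2)$ that the paper gets by expanding the square and applying Young's inequality), and the same constructions $t_i^\star=g_i(x_i^\star)-\tfrac{1}{n}\sum_{j}g_j(x_j^\star)$ and $\tilde{t}_i=g_i(\tilde{x}_i)-\tfrac{1}{n}\sum_{j}g_j(\tilde{x}_j)$ for (d) and (e). Your explicit remark that $\operatorname{dom}(h)$ is a product with unconstrained $t$-blocks, justifying $\tilde{\mathbf{y}}\in\operatorname{rel\;int}(\operatorname{dom}(h))$, is a detail the paper leaves implicit.
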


\begin{proof}
See Appendix~\ref{ssec:proofofproLipschitz}.
\end{proof}

Next, we derive a couple of auxiliary lemmas towards establishing the convergence results. Let $\mathbf{y}(k)$, $\mathbf{v}(k)$, $\mathbf{u}(k)$, $\mathbf{z}(k)$, and $\mathbf{q}(k)$ be generated by IPLUX described in Section~\ref{ssec:alg} with parameters $\gamma>0$, $\lambda>0$, $\rho>0$, and $\alpha=\alpha_1+\alpha_2+\alpha_3>0$. The first lemma bounds the accumulative difference between the objective value and the optimal value of problem~\eqref{eq:originalprobleminy} with the aid of Proposition~\ref{prop:propertyofproby}.

\begin{lemma}\label{lemma:boundedsumF}
Suppose Assumptions~\ref{asm:weightmatrix} and~\ref{asm:prob} hold. If $\alpha\ge L_f+L^2$ and $\lambda\ge\|\mathbf{B}^s\|_2$, then for any $k\ge 1$, 
\begin{equation}\label{eq:sumPhiykystarupperbound}
\sum_{\ell=1}^k (\Phi(\mathbf{y}(\ell))-\Phi(\mathbf{y}^\star)) \le R(0)-R(k),
\end{equation}
where $R(k)=\frac{1}{2\rho}\|\mathbf{z}(k)-\mathbf{z}^\star\|_{H^\dag}^2+\frac{\rho}{2}\|\mathbf{u}(k)\|_W^2+\frac{1}{2\gamma}\|\mathbf{v}(k)\|_{((\mathbf{B}^s)^T\mathbf{B}^s)^\dag}^2+\frac{\gamma\lambda^2+\alpha}{2}\|\mathbf{y}(k)-\mathbf{y}^\star\|^2+\frac{1}{2}\|\mathbf{q}(k)\|^2-\frac{1}{2}\|\mathbf{G}(\mathbf{y}(k))\|^2\ge0$ $\forall k\ge 0$ and $\mathbf{z}^\star=\mathbf{B}\mathbf{y}^\star-\mathbf{c}$.
\end{lemma}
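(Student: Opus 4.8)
The plan is to establish a one-step Lyapunov inequality $\Phi(\mathbf{y}(k+1))-\Phi(\mathbf{y}^\star)\le R(k)-R(k+1)$ and then sum it over $\ell=0,\dots,k-1$, so that the right-hand side telescopes into $R(0)-R(k)$ and gives \eqref{eq:sumPhiykystarupperbound}. The starting point is the primal update \eqref{eq:finalupdatey}: since the four quadratic proximal terms inside $\mathcal{L}^k$ make it strongly convex with modulus $\gamma\lambda^2+\alpha$, its minimizer $\mathbf{y}(k+1)$ satisfies the three-point inequality $\mathcal{L}^k(\mathbf{y}^\star)\ge\mathcal{L}^k(\mathbf{y}(k+1))+\frac{\gamma\lambda^2+\alpha}{2}\|\mathbf{y}^\star-\mathbf{y}(k+1)\|^2$. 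I would expand $\mathcal{L}^k(\mathbf{y}^\star)-\mathcal{L}^k(\mathbf{y}(k+1))$ and isolate $\Phi(\mathbf{y}(k+1))-\Phi(\mathbf{y}^\star)$, using convexity of $h$ (Proposition~\ref{prop:propertyofproby}(a)) and the $L_f$-smoothness together with convexity of $f$ (Proposition~\ref{prop:propertyofproby}(b)) to upgrade the linearization $\langle\nabla f(\mathbf{y}(k)),\cdot\rangle$ into $f$ evaluated at $\mathbf{y}(k+1)$ and $\mathbf{y}^\star$; the descent lemma leaves a residual $\frac{L_f}{2}\|\mathbf{y}(k+1)-\mathbf{y}(k)\|^2$.

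The core of the argument is to rewrite the cross terms produced by $\Delta_1^k,\Delta_2^k,\Delta_3^k$ through the three dual recursions, converting each into a telescoping piece of $R(k)$. Writing $M=(\mathbf{B}^s)^T\mathbf{B}^s$, for the sparse equality handler I would use \eqref{eq:sparsetildevupdate} together with $(\mathbf{B}^s)^T(\mathbf{B}^s\mathbf{y}^\star-\mathbf{c}^s)=\mathbf{0}$ to turn the cross term $\langle\mathbf{v}(k),\mathbf{y}(k+1)-\mathbf{y}^\star\rangle$ into the telescoping difference of $\frac{1}{2\gamma}\|\mathbf{v}\|_{M^\dag}^2$ plus an excess quadratic $\frac{\gamma}{2}\|\mathbf{B}^s\mathbf{y}(k+1)-\mathbf{c}^s\|^2$; this excess is exactly what the proximal construction of $\Delta_1^k$ was designed to absorb, which is legitimate precisely because $\lambda\ge\|\mathbf{B}^s\|_2$ keeps $\lambda^2 I_N-M\succeq\mathbf{O}$. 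For the dense equality handler I would invoke the P-EXTRA energy identity (as in \cite{ShiW15a,Wu19a}) driven by \eqref{eq:relationofdualandprimal}--\eqref{eq:pextrazupdate} and the spectral facts \eqref{eq:WHsymmetricity}--\eqref{eq:WHsumsmallerthanid}, especially $W+H\preceq I_{n(\tilde{m}+\tilde{p})}$, to telescope $\frac{1}{2\rho}\|\mathbf{z}(k)-\mathbf{z}^\star\|_{H^\dag}^2+\frac{\rho}{2}\|\mathbf{u}(k)\|_W^2$ with $\mathbf{z}^\star=\mathbf{B}\mathbf{y}^\star-\mathbf{c}$. For the inequality handler I would use the coordinatewise virtual-queue inequality $q_i(k+1)^2\le(q_i(k)+G_i(\mathbf{y}(k+1)))^2+G_i(\mathbf{y}(k+1))^2$ implied by \eqref{eq:virtualqueueq} to telescope $\frac{1}{2}\|\mathbf{q}(k)\|^2-\frac{1}{2}\|\mathbf{G}(\mathbf{y}(k))\|^2$, and dispose of the leftover penalty cross term via $\mathbf{q}(k)+\mathbf{G}(\mathbf{y}(k))\ge\mathbf{0}$, the convexity of $\mathbf{G}$, and $\mathbf{G}(\mathbf{y}^\star)\le\mathbf{0}$; the $L$-Lipschitz bound $\|\mathbf{G}(\mathbf{y}(k+1))-\mathbf{G}(\mathbf{y}(k))\|^2\le L^2\|\mathbf{y}(k+1)-\mathbf{y}(k)\|^2$ from Proposition~\ref{prop:propertyofproby}(c) controls the residual $\|\mathbf{G}(\mathbf{y}(k+1))\|^2$ term.

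Assembling these, the $\langle\text{dual},\cdot\rangle$ cross terms cancel, the dual-energy residuals are matched exactly, and the two remaining positive quadratics $\frac{L_f}{2}\|\mathbf{y}(k+1)-\mathbf{y}(k)\|^2$ and $\frac{L^2}{2}\|\mathbf{y}(k+1)-\mathbf{y}(k)\|^2$ are dominated by the strong-convexity slack because $\alpha\ge L_f+L^2$; this delivers the one-step descent and hence \eqref{eq:sumPhiykystarupperbound} after summation. Finally, $R(k)\ge0$ follows since $H,W\succeq\mathbf{O}$ make the first two weighted norms nonnegative, these pseudoinverse norms being well-posed because $\mathbf{z}(k)-\mathbf{z}^\star\in\operatorname{Range}(H)$ by \eqref{eq:rangeH} and $\mathbf{v}(k)\in\operatorname{Range}((\mathbf{B}^s)^T)$; the term $\frac{\gamma\lambda^2+\alpha}{2}\|\mathbf{y}(k)-\mathbf{y}^\star\|^2$ is obviously nonnegative; and the queue recursion forces $\mathbf{q}(k)\ge|\mathbf{G}(\mathbf{y}(k))|$ coordinatewise for $k\ge1$, so $\frac{1}{2}\|\mathbf{q}(k)\|^2-\frac{1}{2}\|\mathbf{G}(\mathbf{y}(k))\|^2\ge0$.

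The main obstacle I anticipate is the simultaneous bookkeeping: the three primal--dual mechanisms must be telescoped inside a single Lyapunov function, and the delicate P-EXTRA part intertwines $\mathbf{u}$ and $\mathbf{z}$ through $W$ and $H$, so its energy identity must be reproduced carefully and shown to be compatible with the other two telescopes. The parameter thresholds $\alpha\ge L_f+L^2$ and $\lambda\ge\|\mathbf{B}^s\|_2$ have to emerge exactly matched to the residual quadratics, and arranging every coefficient of $\|\mathbf{y}(k+1)-\mathbf{y}(k)\|^2$ to fall under the available strong-convexity slack is where the bulk of the technical effort lies.
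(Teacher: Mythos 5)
Your overall strategy coincides with the paper's (a one-step descent inequality $\Phi(\mathbf{y}(\ell+1))-\Phi(\mathbf{y}^\star)\le R(\ell)-R(\ell+1)$ obtained by bounding the three cross-term groups and then telescoping), but your very first step, as stated, breaks the argument. You invoke strong convexity of $\mathcal{L}^k$ with modulus $\gamma\lambda^2+\alpha$ only, discarding the extra curvature $\frac{1}{\rho}\mathbf{B}^T\mathbf{B}$ contributed by the term $\frac{1}{2\rho}\|\mathbf{B}\mathbf{y}-\mathbf{c}\|^2$ inside $\Delta_2^k$. That curvature is not optional. Writing $\tilde{\Delta}_2^k(\mathbf{y}):=\langle W\mathbf{u}(k)-\frac{1}{\rho}\mathbf{z}(k),\mathbf{B}\mathbf{y}-\mathbf{c}\rangle+\frac{1}{2\rho}\|\mathbf{B}\mathbf{y}-\mathbf{c}\|^2$ and substituting \eqref{eq:relationofdualandprimal} yields the exact identity $\tilde{\Delta}_2^k(\mathbf{y}^\star)-\tilde{\Delta}_2^k(\mathbf{y}(k+1))=\langle\mathbf{u}(k+1),\mathbf{B}(\mathbf{y}^\star-\mathbf{y}(k+1))\rangle+\frac{1}{2\rho}\|\mathbf{B}(\mathbf{y}^\star-\mathbf{y}(k+1))\|^2$; the inner-product part indeed telescopes into $\frac{1}{2\rho}\|\mathbf{z}(\cdot)-\mathbf{z}^\star\|_{H^\dag}^2+\frac{\rho}{2}\|\mathbf{u}(\cdot)\|_W^2$ via \eqref{eq:pextrazupdate}, \eqref{eq:WHsumsmallerthanid}, and Lemma~\ref{lem:matrixlemma}, but the quadratic $\frac{1}{2\rho}\|\mathbf{B}(\mathbf{y}^\star-\mathbf{y}(k+1))\|^2$ survives. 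It is a positive term in $\|\mathbf{y}^\star-\mathbf{y}(k+1)\|$, so it is neither telescoping nor dominated by the slack $\alpha\ge L_f+L^2$, which only controls $\|\mathbf{y}(k+1)-\mathbf{y}(k)\|^2$ residuals; nor can you sacrifice any of the $-\frac{\gamma\lambda^2+\alpha}{2}\|\mathbf{y}^\star-\mathbf{y}(k+1)\|^2$ produced by your three-point inequality, since all of it is needed to form the $\frac{\gamma\lambda^2+\alpha}{2}\|\mathbf{y}(k+1)-\mathbf{y}^\star\|^2$ term of $R(k+1)$. So your claim that ``the dual-energy residuals are matched exactly'' fails under the inequality you start from. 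The fix is exactly the paper's formulation: apply strong convexity (modulus $\gamma\lambda^2+\alpha$) to $\mathcal{L}^k(\mathbf{y})-\frac{1}{2\rho}\|\mathbf{B}\mathbf{y}-\mathbf{c}\|^2$, keeping the subgradient $-\frac{1}{\rho}\mathbf{B}^T(\mathbf{B}\mathbf{y}(k+1)-\mathbf{c})$ explicit---equivalently, use the sharper three-point inequality weighted by $(\gamma\lambda^2+\alpha)I_N+\frac{1}{\rho}\mathbf{B}^T\mathbf{B}$---which produces precisely the $-\frac{1}{2\rho}\|\mathbf{B}(\mathbf{y}^\star-\mathbf{y}(k+1))\|^2$ needed to cancel the leftover.

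A second, smaller gap concerns $R(k)\ge0$, which the lemma asserts for all $k\ge0$: your queue argument $\mathbf{q}(k)\ge|\mathbf{G}(\mathbf{y}(k))|$ is valid only for $k\ge1$. At $k=0$, the initialization $\mathbf{q}(0)=\max\{-\mathbf{G}(\mathbf{y}(0)),\mathbf{0}\}$ gives $q_i(0)=0$ on every coordinate where $G_i(\mathbf{y}(0))>0$, so $\frac{1}{2}\|\mathbf{q}(0)\|^2-\frac{1}{2}\|\mathbf{G}(\mathbf{y}(0))\|^2$ can be negative. You must pair it with $\frac{\alpha}{2}\|\mathbf{y}(0)-\mathbf{y}^\star\|^2$: from $\mathbf{G}(\mathbf{y}^\star)\le\mathbf{0}$ one gets $\|\mathbf{G}(\mathbf{y}(0))\|^2\le\|\mathbf{q}(0)\|^2+\|\mathbf{G}(\mathbf{y}(0))-\mathbf{G}(\mathbf{y}^\star)\|^2$, and Proposition~\ref{prop:propertyofproby}(c) together with $\alpha\ge L^2$ gives $\|\mathbf{G}(\mathbf{y}(0))-\mathbf{G}(\mathbf{y}^\star)\|^2\le\alpha\|\mathbf{y}(0)-\mathbf{y}^\star\|^2$, which closes this case as in the paper. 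Apart from these two points, your outline---the $\Delta_1$ treatment using $\lambda\ge\|\mathbf{B}^s\|_2$ and $\mathbf{v}(k)\in\operatorname{Range}((\mathbf{B}^s)^T)$, the P-EXTRA energy telescoping, and the virtual-queue inequality with the $L$-Lipschitz residual---is the same as the paper's proof.
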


\begin{proof}
See Appendix~\ref{ssec:proofoflemmaboundedsumF}.
\end{proof}

For each $k\ge1$, let $\bar{\mathbf{y}}(k):=\frac{1}{k}\sum_{\ell=1}^k \mathbf{y}(\ell)$ be the running average of $\mathbf{y}(\ell)$ from $\ell=1$ to $\ell=k$. The second lemma connects the constraint violations regarding problem~\eqref{eq:originalprobleminy} at $\bar{\mathbf{y}}(k)$ with the iterates $\mathbf{q}(k), \mathbf{u}(k), \mathbf{v}(k)$ of IPLUX. 

\begin{lemma}\label{lemma:feasibility}
Suppose Assumptions~\ref{asm:weightmatrix} and~\ref{asm:prob} hold. For any $k\ge 1$,
	\begin{align}
	&\mathbf{G}(\bar{\mathbf{y}}(k))\le \frac{\mathbf{q}(k)}{k},\label{eq:Gbaryklemma}\\
	&(\mathbf{1}_n\!\otimes\! I_{\tilde{m}+\tilde{p}})^T(\mathbf{B}\bar{\mathbf{y}}(k)-\mathbf{c})=\frac{\rho}{k}(\mathbf{1}_n\!\otimes\! I_{\tilde{m}+\tilde{p}})^T(\mathbf{u}(k)\!-\!\mathbf{u}(0)),\label{eq:Bbaryklemma}\\
	&\|\mathbf{B}^s\bar{\mathbf{y}}(k)-\mathbf{c}^s\|= \frac{\|\mathbf{v}(k)-\mathbf{v}(0)\|_{((\mathbf{B}^s)^T\mathbf{B}^s)^{\dag}}}{\gamma k}.\label{eq:Bsbaryklemma}
	\end{align}
\end{lemma}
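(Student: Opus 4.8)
The plan is to establish the three relations independently, each by telescoping the corresponding recursion of IPLUX and then dividing by $k$; the averaging $\bar{\mathbf{y}}(k)=\frac1k\sum_{\ell=1}^k\mathbf{y}(\ell)$ enters only through $\sum_{\ell=1}^k(\mathbf{B}\mathbf{y}(\ell)-\mathbf{c})=k(\mathbf{B}\bar{\mathbf{y}}(k)-\mathbf{c})$, $\sum_{\ell=1}^k(\mathbf{B}^s\mathbf{y}(\ell)-\mathbf{c}^s)=k(\mathbf{B}^s\bar{\mathbf{y}}(k)-\mathbf{c}^s)$, and Jensen's inequality.

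For \eqref{eq:Gbaryklemma}, I would start from the virtual-queue update \eqref{eq:virtualqueueq}, which since the maximum dominates its second argument gives the elementwise bound $\mathbf{G}(\mathbf{y}(\ell))\le\mathbf{q}(\ell)-\mathbf{q}(\ell-1)$ for every $\ell\ge1$. Summing over $\ell=1,\dots,k$ telescopes to $\sum_{\ell=1}^k\mathbf{G}(\mathbf{y}(\ell))\le\mathbf{q}(k)-\mathbf{q}(0)\le\mathbf{q}(k)$, where the last inequality uses $\mathbf{q}(0)=\max\{-\mathbf{G}(\mathbf{y}(0)),\mathbf{0}\}\ge\mathbf{0}$. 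Since each component of $\mathbf{G}$ is convex (by its construction from the convex functions $g_i$, $g_{ij}^s$, cf. Proposition~\ref{prop:propertyofproby}(a)), Jensen's inequality yields $\mathbf{G}(\bar{\mathbf{y}}(k))\le\frac1k\sum_{\ell=1}^k\mathbf{G}(\mathbf{y}(\ell))\le\mathbf{q}(k)/k$.

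For \eqref{eq:Bbaryklemma}, I would left-multiply the dual recursion \eqref{eq:relationofdualandprimal} by $(\mathbf{1}_n\otimes I_{\tilde m+\tilde p})^T$. The preliminary observation is that $(\mathbf{1}_n\otimes I_{\tilde m+\tilde p})^T\mathbf{z}(k)=\mathbf{0}$ for all $k$: this holds at $k=0$ because $\mathbf{z}(0)=\rho H\mathbf{u}(0)$ together with \eqref{eq:H1=0} forces $(\mathbf{1}_n\otimes I)^T\mathbf{z}(0)=\mathbf{0}$, and it is preserved by \eqref{eq:pextrazupdate} for the same reason. Using $(\mathbf{1}_n\otimes I)^TW=(\mathbf{1}_n\otimes I)^T$ from \eqref{eq:W1is1}, the projected recursion collapses to $(\mathbf{1}_n\otimes I)^T(\mathbf{u}(\ell)-\mathbf{u}(\ell-1))=\frac1\rho(\mathbf{1}_n\otimes I)^T(\mathbf{B}\mathbf{y}(\ell)-\mathbf{c})$. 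Telescoping over $\ell=1,\dots,k$ and substituting $\sum_{\ell=1}^k(\mathbf{B}\mathbf{y}(\ell)-\mathbf{c})=k(\mathbf{B}\bar{\mathbf{y}}(k)-\mathbf{c})$ gives the claimed identity after multiplying through by $\rho/k$.

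For the norm identity \eqref{eq:Bsbaryklemma}, I would telescope the dual update \eqref{eq:sparsetildevupdate} to obtain $\mathbf{v}(k)-\mathbf{v}(0)=\gamma k\,(\mathbf{B}^s)^T(\mathbf{B}^s\bar{\mathbf{y}}(k)-\mathbf{c}^s)$. Writing $w:=\mathbf{B}^s\bar{\mathbf{y}}(k)-\mathbf{c}^s$ and substituting into $\|\mathbf{v}(k)-\mathbf{v}(0)\|_{((\mathbf{B}^s)^T\mathbf{B}^s)^\dag}^2$, the squared weighted norm equals $(\gamma k)^2\,w^T\mathbf{B}^s\bigl((\mathbf{B}^s)^T\mathbf{B}^s\bigr)^\dag(\mathbf{B}^s)^Tw$. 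The key step is to recognize $P:=\mathbf{B}^s\bigl((\mathbf{B}^s)^T\mathbf{B}^s\bigr)^\dag(\mathbf{B}^s)^T$ as the orthogonal projector onto $\operatorname{Range}(\mathbf{B}^s)$, so that $w^TPw=\|Pw\|^2$; and then to argue $w\in\operatorname{Range}(\mathbf{B}^s)$, whence $Pw=w$ and the expression reduces to $(\gamma k)^2\|w\|^2$, giving \eqref{eq:Bsbaryklemma} upon taking square roots. The membership $w\in\operatorname{Range}(\mathbf{B}^s)$ holds because $\mathbf{B}^s\bar{\mathbf{y}}(k)\in\operatorname{Range}(\mathbf{B}^s)$ trivially, while $\mathbf{c}^s\in\operatorname{Range}(\mathbf{B}^s)$ follows from the Slater point of Assumption~\ref{asm:prob}(f), which yields a $\tilde{\mathbf{y}}$ with $\mathbf{B}^s\tilde{\mathbf{y}}=\mathbf{c}^s$. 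I expect this last relation to be the main obstacle: it requires both the projection characterization of the Moore--Penrose inverse and the feasibility-based range membership, whereas the other two parts are purely mechanical telescoping arguments.
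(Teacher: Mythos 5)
Your proposal is correct and follows essentially the same route as the paper's proof: telescoping the virtual-queue update plus convexity of $\mathbf{G}$ for \eqref{eq:Gbaryklemma}, projecting the $\mathbf{u}$-recursion by $(\mathbf{1}_n\otimes I_{\tilde{m}+\tilde{p}})^T$ using \eqref{eq:W1is1}, \eqref{eq:H1=0}, and $(\mathbf{1}_n\otimes I_{\tilde{m}+\tilde{p}})^T\mathbf{z}(\ell)=\mathbf{0}$ for \eqref{eq:Bbaryklemma}, and telescoping \eqref{eq:sparsetildevupdate} with Moore--Penrose algebra for \eqref{eq:Bsbaryklemma}. The only cosmetic difference is in the last part, where the paper writes $\mathbf{c}^s=\mathbf{B}^s\mathbf{y}^\star$ via the feasible optimum and invokes $MM^\dag M=M$, whereas you use the Slater point for range membership and the projector characterization of $\mathbf{B}^s((\mathbf{B}^s)^T\mathbf{B}^s)^\dag(\mathbf{B}^s)^T$ --- two equivalent phrasings of the same identity.
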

\begin{proof}
	See Appendix \ref{sec:prooflemmafeasibility}.
\end{proof}

From Lemma~\ref{lemma:feasibility}, if $\mathbf{q}(k)$, $(\mathbf{1}_n\otimes I_{\tilde{m}+\tilde{p}})^T(\mathbf{u}(k)-\mathbf{u}(0))$, and $\|\mathbf{v}(k)-\mathbf{v}(0)\|_{((\mathbf{B}^s)^T\mathbf{B}^s)^{\dag}}$ are bounded, then $\bar{\mathbf{y}}(k)$ would reach the feasibility of problem~\eqref{eq:originalprobleminy} at a rate of $O(1/k)$. As is shown in Theorem~\ref{theo:funcval} below, this is indeed true, and the optimal value of problem~\eqref{eq:originalprobleminy} can also be achieved at an $O(1/k)$ rate. 

To introduce Theorem~\ref{theo:funcval}, we let $(\mathbf{q}^\star, u^\star, \tilde{\mathbf{v}}^\star)$ be a geometric multiplier for problem~\eqref{eq:originalprobleminy}, where $\mathbf{q}^\star\ge\mathbf{0}_{n\tilde{p}+\sum_{{i\in\mathcal{V}^{\operatorname{in}}}} p_i}$, $u^\star\in\mathbb{R}^{\tilde{m}+\tilde{p}}$, and $\tilde{\mathbf{v}}^\star\in\mathbb{R}^{\sum_{i\in\mathcal{V}^{\operatorname{eq}}}m_i}$. Owing to Proposition \ref{prop:propertyofproby}(e),
\begin{align}
&\Phi(\mathbf{y}^\star)=\min_{\mathbf{y}\in \mathbb{R}^N}~\Phi(\mathbf{y})+\langle\mathbf{q}^\star,\mathbf{G}(\mathbf{y})\rangle\nonumber\displaybreak[0]\\
&\quad+\langle u^\star,(\mathbf{1}_n\otimes I_{\tilde{m}+\tilde{p}})^T(\mathbf{B}\mathbf{y}-\mathbf{c})\rangle+\langle\tilde{\mathbf{v}}^\star,\mathbf{B}^s\mathbf{y}-\mathbf{c}^s\rangle.\label{eq:ystarminimum}
\end{align}
Then, we present our main results in Theorem~\ref{theo:funcval}.

\begin{theorem}\label{theo:funcval}
Suppose Assumptions~\ref{asm:weightmatrix} and~\ref{asm:prob} hold. Also suppose $\alpha\ge L_f+L^2$ and $\lambda\ge\|\mathbf{B}^s\|_2$. Then, for any $k\ge 1$,
\begin{align}
	&\mathbf{G}(\bar{\mathbf{y}}(k))\le \frac{V_{\mathbf{q}}}{k}\mathbf{1}_{n\tilde{p}+\underset{i\in\mathcal{V}^{\operatorname{in}}}{\sum} p_i},\label{eq:Gerror}\\
	&\|(\mathbf{1}\otimes I_{\tilde{m}+\tilde{p}})^T(\mathbf{B}\bar{\mathbf{y}}(k)-\mathbf{c})\|\le\frac{\rho V_{\mathbf{u}}}{k},\label{eq:Berror}\\
	&\|\mathbf{B}^s\bar{\mathbf{y}}(k)-\mathbf{c}^s\|\le\frac{V_{\mathbf{v}}}{\gamma k},\label{eq:Bserror}\\
	&-\frac{R_{\Phi}}{k}\le\Phi(\bar{\mathbf{y}}(k))-\Phi(\mathbf{y}^\star)\le\frac{R_{\Phi}'}{k},\label{eq:theofuncvalbound}
\end{align}
where $V_{\mathbf{q}},V_{\mathbf{u}},V_{\mathbf{v}},R_{\Phi},R_{\Phi}'\in[0,\infty)$ are given by
\begin{align*}
V_{\mathbf{q}}=&2(\|\mathbf{q}^\star\|+\sqrt{C}),\displaybreak[0]\\
V_{\mathbf{u}}=&\sqrt{n}(\|\mathbf{u}(0)\|_W+\sqrt{n}\|u^\star\|+\sqrt{2C/\rho}),\displaybreak[0]\\
V_{\mathbf{v}}=&\|\mathbf{v}(0)\|_{((\mathbf{B}^s)^T\mathbf{B}^s)^{\dag}}+\|\tilde{\mathbf{v}}^\star\|+\sqrt{2\gamma C},\displaybreak[0]\\
R_{\Phi}=&V_{\mathbf{q}}\mathbf{1}_{n\tilde{p}+\sum_{i\in\mathcal{V}^{\operatorname{in}}} p_i}^T\mathbf{q}^\star+\rho V_{\mathbf{u}}\|u^\star\|+V_{\mathbf{v}}\|\tilde{\mathbf{v}}^\star\|/\gamma,\displaybreak[0]\\
R_{\Phi}'=&R(0),\displaybreak[0]\\
C=&R(0)\!+\!\rho\sqrt{n}\|u^\star\|\!\cdot\!\|\mathbf{u}(0)\|_W\!+\!\frac{1}{\gamma}\|\tilde{\mathbf{v}}^\star\|\!\cdot\!\|\mathbf{v}(0)\|_{((\mathbf{B}^s)^T\mathbf{B}^s)^{\dag}}\displaybreak[0]\\
&+\frac{\rho n}{2}\|u^\star\|^2+\frac{1}{2\gamma}\|\tilde{\mathbf{v}}^\star\|^2+\|\mathbf{q}^\star\|^2+\frac{1}{2}\|\mathbf{G}(\mathbf{y}^\star)\|^2.
\end{align*}
\end{theorem}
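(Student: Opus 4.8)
The plan is to obtain all four estimates from Lemmas~\ref{lemma:boundedsumF} and~\ref{lemma:feasibility}, so that the whole proof reduces to bounding the dual-type iterates $\mathbf{q}(k)$, $\mathbf{u}(k)$, and $\mathbf{v}(k)$ by constants. Two of the four bounds are immediate. For the optimality upper bound, convexity of $\Phi$ (Proposition~\ref{prop:propertyofproby}(a)) gives $\Phi(\bar{\mathbf{y}}(k))\le\frac{1}{k}\sum_{\ell=1}^{k}\Phi(\mathbf{y}(\ell))$, so Lemma~\ref{lemma:boundedsumF} with $R(k)\ge 0$ yields $\Phi(\bar{\mathbf{y}}(k))-\Phi(\mathbf{y}^\star)\le (R(0)-R(k))/k\le R(0)/k=R_\Phi'/k$. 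The three feasibility bounds \eqref{eq:Gerror}--\eqref{eq:Bserror} follow directly from Lemma~\ref{lemma:feasibility} once $\|\mathbf{q}(k)\|$, $\|(\mathbf{1}_n\otimes I)^T(\mathbf{u}(k)-\mathbf{u}(0))\|$, and $\|\mathbf{v}(k)-\mathbf{v}(0)\|_{((\mathbf{B}^s)^T\mathbf{B}^s)^\dag}$ are controlled, and the optimality lower bound then comes from the geometric-multiplier inequality \eqref{eq:ystarminimum}.

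Next I would derive the master estimate. Weak duality via \eqref{eq:ystarminimum} gives, for each $\ell$, the lower bound $\Phi(\mathbf{y}(\ell))-\Phi(\mathbf{y}^\star)\ge-\langle\mathbf{q}^\star,\mathbf{G}(\mathbf{y}(\ell))\rangle-\langle u^\star,(\mathbf{1}_n\otimes I)^T(\mathbf{B}\mathbf{y}(\ell)-\mathbf{c})\rangle-\langle\tilde{\mathbf{v}}^\star,\mathbf{B}^s\mathbf{y}(\ell)-\mathbf{c}^s\rangle$. Summing over $\ell$ and converting each sum into an iterate difference---using the virtual-queue recursion \eqref{eq:virtualqueueq} to get $\sum_{\ell=1}^{k}\mathbf{G}(\mathbf{y}(\ell))\le\mathbf{q}(k)$ (whence $\langle\mathbf{q}^\star,\cdot\rangle\le\langle\mathbf{q}^\star,\mathbf{q}(k)\rangle$ since $\mathbf{q}^\star\ge\mathbf{0}$), Lemma~\ref{lemma:feasibility} for the dense-equality sum, and the dual recursion \eqref{eq:sparsetildevupdate} for the sparse-equality sum---and combining with Lemma~\ref{lemma:boundedsumF}, I obtain
\begin{equation*}
R(k)\le R(0)+\langle\mathbf{q}^\star,\mathbf{q}(k)\rangle+\rho\langle u^\star,(\mathbf{1}_n\otimes I)^T(\mathbf{u}(k)-\mathbf{u}(0))\rangle+\tfrac{1}{\gamma}\langle\tilde{\mathbf{v}}^\star,\tilde{\mathbf{v}}(k)-\tilde{\mathbf{v}}(0)\rangle,
\end{equation*}
that is, the quadratic quantity $R(k)$ is dominated by an expression linear in the iterates.

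The core obstacle is converting this mixed inequality into explicit constants by completing the square, the delicate point being the sign-indefinite $-\tfrac12\|\mathbf{G}(\mathbf{y}(k))\|^2$ inside $R(k)$. For the $\mathbf{u}$-block I would use $W(\mathbf{1}_n\otimes I)=\mathbf{1}_n\otimes I$ to rewrite $\langle u^\star,(\mathbf{1}_n\otimes I)^T\mathbf{u}(k)\rangle$ as a $W$-inner product and bound it by $\sqrt{n}\|u^\star\|\,\|\mathbf{u}(k)\|_W$ (Cauchy--Schwarz in the $W$-seminorm); for the $\mathbf{v}$-block, since $\mathbf{v}(k)=(\mathbf{B}^s)^T\tilde{\mathbf{v}}(k)$, the weight $((\mathbf{B}^s)^T\mathbf{B}^s)^\dag$ makes $\|\mathbf{v}(k)\|_{((\mathbf{B}^s)^T\mathbf{B}^s)^\dag}$ equal to the norm of the projection of $\tilde{\mathbf{v}}(k)$ onto $\operatorname{Range}(\mathbf{B}^s)$, so the $\tilde{\mathbf{v}}^\star$-term is bounded by $\|\tilde{\mathbf{v}}^\star\|\,\|\mathbf{v}(k)\|_{((\mathbf{B}^s)^T\mathbf{B}^s)^\dag}$. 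Completing the square against the $\tfrac{\rho}{2}\|\mathbf{u}(k)\|_W^2$ and $\tfrac{1}{2\gamma}\|\mathbf{v}(k)\|_{((\mathbf{B}^s)^T\mathbf{B}^s)^\dag}^2$ terms of $R(k)$ isolates $\|\mathbf{u}(k)\|_W$ and $\|\mathbf{v}(k)\|_{((\mathbf{B}^s)^T\mathbf{B}^s)^\dag}$ and deposits $\tfrac{\rho n}{2}\|u^\star\|^2$, $\tfrac{1}{2\gamma}\|\tilde{\mathbf{v}}^\star\|^2$ and the $\mathbf{u}(0),\mathbf{v}(0)$ cross terms into $C$. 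For the $\mathbf{q}$-block I would isolate $\tfrac12\|\mathbf{q}(k)\|^2$ and dominate $\tfrac12\|\mathbf{G}(\mathbf{y}(k))\|^2$ by the Lipschitz bound $\|\mathbf{G}(\mathbf{y}(k))\|\le\|\mathbf{G}(\mathbf{y}^\star)\|+L\|\mathbf{y}(k)-\mathbf{y}^\star\|$ (Proposition~\ref{prop:propertyofproby}(c)), paying for the resulting $\|\mathbf{y}(k)-\mathbf{y}^\star\|^2$ against the $\tfrac{\gamma\lambda^2+\alpha}{2}\|\mathbf{y}(k)-\mathbf{y}^\star\|^2$ already present in $R(k)$---this is exactly where $\alpha\ge L_f+L^2$ is consumed---leaving a scalar inequality $\tfrac12\|\mathbf{q}(k)\|^2\le C+\|\mathbf{q}^\star\|\,\|\mathbf{q}(k)\|$, whose larger root gives $\|\mathbf{q}(k)\|\le 2(\|\mathbf{q}^\star\|+\sqrt{C})=V_{\mathbf{q}}$.

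Finally I would assemble. Substituting the three iterate bounds into Lemma~\ref{lemma:feasibility} yields \eqref{eq:Gerror}--\eqref{eq:Bserror} with the stated $V_{\mathbf{q}},V_{\mathbf{u}},V_{\mathbf{v}}$. For the optimality lower bound, evaluating \eqref{eq:ystarminimum} at $\bar{\mathbf{y}}(k)$ gives $\Phi(\bar{\mathbf{y}}(k))-\Phi(\mathbf{y}^\star)\ge-\langle\mathbf{q}^\star,\mathbf{G}(\bar{\mathbf{y}}(k))\rangle-\langle u^\star,(\mathbf{1}_n\otimes I)^T(\mathbf{B}\bar{\mathbf{y}}(k)-\mathbf{c})\rangle-\langle\tilde{\mathbf{v}}^\star,\mathbf{B}^s\bar{\mathbf{y}}(k)-\mathbf{c}^s\rangle$, and bounding each term through \eqref{eq:Gerror}--\eqref{eq:Bserror} (using $\mathbf{q}^\star\ge\mathbf{0}$ together with $\mathbf{G}(\bar{\mathbf{y}}(k))\le\frac{V_{\mathbf{q}}}{k}\mathbf{1}$ for the first, and Cauchy--Schwarz for the other two) produces exactly $-R_\Phi/k$. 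The single genuinely hard step is the $\mathbf{q}$-block bookkeeping above: the indefinite $-\tfrac12\|\mathbf{G}(\mathbf{y}(k))\|^2$ must be absorbed by the primal proximal term without inflating the constant, which is what forces the condition $\alpha\ge L_f+L^2$.
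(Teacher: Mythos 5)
Your overall route is the paper's route: sum the geometric-multiplier inequality \eqref{eq:ystarminimum} over $\ell=1,\dots,k$, convert the three constraint sums into iterate quantities via \eqref{eq:sumGupperbound}, \eqref{eq:sumbylminusc}, and \eqref{eq:sumbsyminusc}, combine with Lemma~\ref{lemma:boundedsumF} so that the quadratic $R(k)$ is dominated by an expression linear in $\|\mathbf{q}(k)\|$, $\|\mathbf{u}(k)\|_W$, and $\|\mathbf{v}(k)\|_{((\mathbf{B}^s)^T\mathbf{B}^s)^\dag}$, complete squares, and feed the resulting bounds back into Lemma~\ref{lemma:feasibility} and \eqref{eq:ystarminimum}. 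Your $\mathbf{u}$-block and $\mathbf{v}$-block treatments are sound and coincide with the paper's \eqref{eq:1uku0upperbound} and \eqref{eq:sumbsyminusc}, and your derivations of the two optimality bounds are the paper's.

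The gap is in the $\mathbf{q}$-block, exactly the step you single out as the hard one. You keep the full $\tfrac12\|\mathbf{q}(k)\|^2$ in the lower bound of $R(k)$ and propose to absorb all of $\tfrac12\|\mathbf{G}(\mathbf{y}(k))\|^2\le\tfrac12\bigl(\|\mathbf{G}(\mathbf{y}^\star)\|+L\|\mathbf{y}(k)-\mathbf{y}^\star\|\bigr)^2$ into $\tfrac{\gamma\lambda^2+\alpha}{2}\|\mathbf{y}(k)-\mathbf{y}^\star\|^2$. After the cross term $L\|\mathbf{G}(\mathbf{y}^\star)\|\,\|\mathbf{y}(k)-\mathbf{y}^\star\|$ is charged to the quadratic part, the coefficient you must cancel is at least $L^2$, so the absorption requires $\gamma\lambda^2+\alpha\ge 2L^2$, which does \emph{not} follow from $\alpha\ge L_f+L^2$ when $L_f$ and $\gamma\lambda^2$ are small relative to $L^2$. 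You can instead complete the square in $\|\mathbf{y}(k)-\mathbf{y}^\star\|$ (legitimate, since $\alpha>L^2$), but the remainder is then $\tfrac12\|\mathbf{G}(\mathbf{y}^\star)\|^2\cdot\tfrac{\gamma\lambda^2+\alpha}{\gamma\lambda^2+\alpha-L^2}$, strictly larger than the $\tfrac12\|\mathbf{G}(\mathbf{y}^\star)\|^2$ appearing in the stated $C$; either way your scalar inequality $\tfrac12\|\mathbf{q}(k)\|^2\le C+\|\mathbf{q}^\star\|\,\|\mathbf{q}(k)\|$ with the theorem's $C$ is not established, and the stated $V_{\mathbf{q}},V_{\mathbf{u}},V_{\mathbf{v}},R_\Phi$ do not come out. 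The missing ingredient is the virtual-queue property \eqref{eq:qkplus1normlargethang} of Lemma~\ref{lemma:qkproperties}, $\|\mathbf{q}(k)\|\ge\|\mathbf{G}(\mathbf{y}(k))\|$: the paper uses it to write $\|\mathbf{G}(\mathbf{y}(k))\|^2\le\|\mathbf{q}(k)\|\,\|\mathbf{G}(\mathbf{y}(k))\|\le\tfrac12\|\mathbf{q}(k)\|^2+\tfrac12\|\mathbf{G}(\mathbf{y}(k))\|^2$, so that only $\tfrac{L^2}{2}\|\mathbf{y}(k)-\mathbf{y}^\star\|^2\le\tfrac{\alpha}{2}\|\mathbf{y}(k)-\mathbf{y}^\star\|^2$ has to be absorbed (and $\alpha\ge L^2$ suffices, with the $\tfrac{\gamma\lambda^2}{2}$ margin left over), at the price of retaining only $\tfrac14\|\mathbf{q}(k)\|^2$; completing $\tfrac14(\|\mathbf{q}(k)\|-2\|\mathbf{q}^\star\|)^2$ is precisely what produces the $\|\mathbf{q}^\star\|^2$ and $\tfrac12\|\mathbf{G}(\mathbf{y}^\star)\|^2$ terms in $C$ and the factor $2$ in $V_{\mathbf{q}}=2(\|\mathbf{q}^\star\|+\sqrt{C})$. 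A smaller inaccuracy: $\alpha\ge L_f+L^2$ is not ``consumed'' at this step; it is consumed in the proof of Lemma~\ref{lemma:boundedsumF}, while the theorem's own argument only uses $\alpha\ge L^2$.
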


\begin{proof}
See Appendix \ref{ssec:proofoffuncval}.
\end{proof}

Theorem \ref{theo:funcval} states that IPLUX achieves the optimality and feasibility of problem~\eqref{eq:originalprobleminy} at an $O(1/k)$ rate. The following corollary traces back to problem~\eqref{eq:prob} and provides $O(1/k)$ bounds on the constraint violations and the objective error with respect to problem~\eqref{eq:prob} at $\bar{x}_i(k)=\frac{1}{k}\sum_{\ell=1}^k x_i(\ell)$ $\forall i\in\mathcal{V}$ $\forall k\ge 1$, where the $x_i(k)$'s are generated in Algorithm~\ref{alg:IPLUX}.

\begin{corollary}\label{cor:originalprobconv}
Suppose all the conditions in Theorem~\ref{theo:funcval} hold. Then, for any $k\ge 1$,
\begin{align}
&\sum_{i\in\mathcal{V}}g_i(\bar{x}_i(k))\le\frac{nV_{\mathbf{q}}+\rho V_{\mathbf{u}}}{k}\mathbf{1}_{\tilde{p}},\displaybreak[0]\label{eq:globineqconv}\\
&\sum_{j\in S_i^{\operatorname{in}}} g_{ij}^s(\bar{x}_j(k))\le \frac{V_{\mathbf{q}}}{k}\mathbf{1}_{p_i},\quad\forall i\in\mathcal{V}^{\operatorname{in}},\label{eq:sparseineqconv}\\
&\|\sum_{i\in\mathcal{V}}(A_i\bar{x}_i(k)-b_i)\|\le\frac{\rho V_{\mathbf{u}}}{k},\displaybreak[0]\label{eq:globeqconv}\\
&\|\Bigl(\sum_{j\in S_i^{\operatorname{eq}}}A_{ij}^s\bar{x}_j(k)\Bigr)-b_i^s\|\le\frac{V_{\mathbf{v}}}{\gamma k},\quad\forall i\in\mathcal{V}^{\operatorname{eq}},\label{eq:sparseeqconv}\\
&-\frac{R_{\Phi}}{k}\!\le\!\sum_{i\in \mathcal{V}} ((f_i+h_i)(\bar{x}_i(k))\!-\!(f_i+h_i)(x_i^\star))\!\le\!\frac{R_{\Phi}'}{k},\label{eq:funcvalbound}
\end{align}
where the constants are given in Theorem~\ref{theo:funcval}.
\end{corollary}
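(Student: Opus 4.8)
The plan is to read Corollary~\ref{cor:originalprobconv} off Theorem~\ref{theo:funcval} by unpacking the lifted quantities $\mathbf{G}$, $\mathbf{B}$, $\mathbf{B}^s$, and $\Phi$ in terms of the original data of problem~\eqref{eq:prob}. Writing each averaged iterate as $\bar{\mathbf{y}}(k)=[\bar{y}_1(k)^T,\ldots,\bar{y}_n(k)^T]^T$ with $\bar{y}_i(k)=[\bar{x}_i(k)^T,\bar{t}_i(k)^T]^T$ and $\bar{t}_i(k)=\frac{1}{k}\sum_{\ell=1}^k t_i(\ell)$, every bound in the corollary is obtained by extracting the appropriate sub-block of a conclusion in Theorem~\ref{theo:funcval} and invoking two elementary facts: the Euclidean norm of a sub-vector is dominated by the norm of the whole vector, and $\|w\|\le c$ forces each entry of $w$ to lie in $[-c,c]$.

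First I would dispatch the bounds that translate directly. For the sparse inequalities, the lower block of $G_i(\bar{\mathbf{y}}(k))$ equals $\sum_{j\in S_i^{\operatorname{in}}}g_{ij}^s(\bar{x}_j(k))$ for $i\in\mathcal{V}^{\operatorname{in}}$, so \eqref{eq:sparseineqconv} is exactly the corresponding sub-block of \eqref{eq:Gerror}. For the dense equality residual, the $x$-part of $(\mathbf{1}_n\otimes I_{\tilde{m}+\tilde{p}})^T(\mathbf{B}\bar{\mathbf{y}}(k)-\mathbf{c})$ is $\sum_{i\in\mathcal{V}}(A_i\bar{x}_i(k)-b_i)$, whose norm is at most the full norm bounded in \eqref{eq:Berror}, giving \eqref{eq:globeqconv}; likewise $B_i^s\bar{\mathbf{y}}(k)-b_i^s=(\sum_{j\in S_i^{\operatorname{eq}}}A_{ij}^s\bar{x}_j(k))-b_i^s$ is a block of $\mathbf{B}^s\bar{\mathbf{y}}(k)-\mathbf{c}^s$, so \eqref{eq:sparseeqconv} follows from \eqref{eq:Bserror}. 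Finally, since $\Phi$ depends only on the $x$-blocks, $\Phi(\bar{\mathbf{y}}(k))=\sum_{i\in\mathcal{V}}(f_i+h_i)(\bar{x}_i(k))$ and $\Phi(\mathbf{y}^\star)=\sum_{i\in\mathcal{V}}(f_i+h_i)(x_i^\star)$ by the equivalence established in Section~\ref{ssec:probtrans}, so \eqref{eq:funcvalbound} is precisely \eqref{eq:theofuncvalbound}.

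The only estimate that genuinely recombines two bounds is \eqref{eq:globineqconv}, and this is where the decoupling auxiliary variables $t_i$ reassemble. From the upper block of \eqref{eq:Gerror} I get $g_i(\bar{x}_i(k))-\bar{t}_i(k)\le\frac{V_{\mathbf{q}}}{k}\mathbf{1}_{\tilde{p}}$ for every $i\in\mathcal{V}$; summing over $i$ yields $\sum_{i\in\mathcal{V}}g_i(\bar{x}_i(k))\le\sum_{i\in\mathcal{V}}\bar{t}_i(k)+\frac{nV_{\mathbf{q}}}{k}\mathbf{1}_{\tilde{p}}$. The $t$-part of $(\mathbf{1}_n\otimes I_{\tilde{m}+\tilde{p}})^T(\mathbf{B}\bar{\mathbf{y}}(k)-\mathbf{c})$ equals $\sum_{i\in\mathcal{V}}\bar{t}_i(k)$, whose norm is bounded by \eqref{eq:Berror}, so $\sum_{i\in\mathcal{V}}\bar{t}_i(k)\le\frac{\rho V_{\mathbf{u}}}{k}\mathbf{1}_{\tilde{p}}$ componentwise; combining the two gives \eqref{eq:globineqconv}.

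I expect no analytical difficulty here, as the whole corollary is a bookkeeping exercise once Theorem~\ref{theo:funcval} is in hand. The only points demanding care are matching each sub-block of the lifted variables to the correct piece of the original problem data and, for \eqref{eq:globineqconv}, recognizing that the coupled inequality $\sum_{i\in\mathcal{V}}g_i(\bar{x}_i(k))\le\mathbf{0}$ is recovered asymptotically by adding the per-agent slack inequalities $g_i(\bar{x}_i(k))\le\bar{t}_i(k)$ to the vanishing aggregate $\sum_{i\in\mathcal{V}}\bar{t}_i(k)$, exactly mirroring the reformulation \eqref{eq:g<=t}--\eqref{eq:sumt=0}.
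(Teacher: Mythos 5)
Your proposal is correct and follows essentially the same route as the paper's own proof: each bound is obtained by extracting the relevant sub-block of the conclusions of Theorem~\ref{theo:funcval}, and the coupled inequality \eqref{eq:globineqconv} is recovered exactly as the paper does it, by summing the per-agent bounds $g_i(\bar{x}_i(k))-\bar{t}_i(k)\le\frac{V_{\mathbf{q}}}{k}\mathbf{1}_{\tilde{p}}$ from \eqref{eq:Gerror} and controlling $\sum_{i\in\mathcal{V}}\bar{t}_i(k)$ componentwise through the $t$-block of \eqref{eq:Berror}. No gaps; the bookkeeping of sub-blocks (the $x$- and $t$-parts of $\mathbf{B}\bar{\mathbf{y}}(k)-\mathbf{c}$, the two blocks of $G_i$, and the identification $\Phi(\mathbf{y}^\star)=\sum_{i\in\mathcal{V}}(f_i+h_i)(x_i^\star)$) matches the paper's Appendix argument step for step.
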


\begin{proof}
See Appendix \ref{ssec:proofofcororiginprob}.
\end{proof}

\subsection{Comparison with Related Works}

This subsection compares IPLUX and the prior distributed optimization methods \cite{ChangTH14,Falsone17,Notarnicola20,Liang19,Liang19a} in their assumptions and convergence results, whereby \cite{ChangTH14,Falsone17,Notarnicola20} can also tackle nonlinear inequality constraints like (2a) and (2c) in problem~\eqref{eq:prob} and \cite{Liang19,Liang19a} manage to handle both nonlinear inequality and linear equality constraints like (2a)--(2d). Note that they all assume the problem to be convex. Moreover, \cite{Notarnicola20,Liang19,Liang19a}, like IPLUX, consider undirected, fixed graphs, while \cite{ChangTH14,Falsone17} allow for time-varying directed graphs.

From Table~\ref{table:comparison}, \cite{ChangTH14,Liang19,Liang19a} require that the objective function be smooth on a closed convex set, i.e., each $h_i$ in \eqref{eq:prob} be equal to an indicator function with respect to that set, yet IPLUX and \cite{Falsone17,Notarnicola20} can address problems with general nonsmooth objective functions. In addition, \cite{ChangTH14,Falsone17,Notarnicola20,Liang19a} assume that the problem is constrained by a compact set, while IPLUX and \cite{Liang19} do not need such compactness. When it comes to the nonlinear inequality constraint functions, IPLUX requires a slightly stronger condition than \cite{Falsone17,Notarnicola20}. Specifically, IPLUX and \cite{Liang19a} need Lipschitz continuity of the inequality constraint functions, which is relaxed to local Lipschitz continuity in \cite{Liang19}. However, unlike \cite{ChangTH14,Liang19}, IPLUX does not require the inequality constraint functions to have Lipschitz gradients. Furthermore, the Slater's condition is necessary for all these algorithms to establish their convergence results.

Among the algorithms in Table~\ref{table:comparison}, only IPLUX and \cite{Liang19a} derive non-asymptotic convergence results. It can be seen that the $O(1/k)$ convergence rate of IPLUX is much faster than the $O(\ln k/\sqrt{k})$ rate in \cite{Liang19a}, and IPLUX indeed imposes weaker assumptions than \cite{Liang19a}. Finally, the algorithms in \cite{ChangTH14,Falsone17,Notarnicola20,Liang19a} adopt diminishing step-sizes, which may cause slow convergence in practice.

\begin{table*}[tb]
\centering
\caption{{\upshape Comparison in problem assumptions and convergence results. Here, $\surd$ means the assumption is required.}}\label{table:comparison}
\vspace*{0.1in}
\begin{tabular}{|c|c|c|c|c|c|c|c|c|c|}
\hline
Algorithm & smooth & compact & Lipschitz continuous & smooth & Slater's &convergence\\
& objective & domain & constraint function & constraint function & condition & result\\
\hline
primal-dual perturbation method \cite{ChangTH14} & $\surd$ & $\surd$ & & $\surd$ &$\surd$ &asymptotic\\
\hline
dual-decomposition-based method \cite{Falsone17} & & $\surd$ & & & $\surd$ & asymptotic\\
\hline
RSDD \cite{Notarnicola20} & & $\surd$ & & & $\surd$ & asymptotic\\
\hline
primal-dual gradient method \cite{Liang19} & $\surd$ & & $\surd$ (local) & $\surd$ (local) & $\surd$ & asymptotic\\
\hline
dual subgradient method \cite{Liang19a} & $\surd$ & $\surd$ & $\surd$ & & $\surd$ & $O(\ln k/\sqrt{k})$\\
\hline
IPLUX & & & $\surd$ & & $\surd$ & $O(1/k)$\\
\hline
\end{tabular}
\end{table*}

\section{Numerical Examples}\label{sec:numericalexample}

In this section, we demonstrate the practical performance of IPLUX in solving distributed convex optimization problems with both nonlinear inequality and linear equality constraints.

\subsection{Smooth Problem}\label{ssec:smooth}

We first consider the following problem:
\begin{align}
\underset{x_1,\ldots,x_n\in\mathbb{R}^d}{\operatorname{minimize}} ~&~ \sum_{i=1}^n\! \left(x_i^TP_ix_i+Q_i^Tx_i+\mathcal{I}_{X_i}(x_i)\right)\displaybreak[0]\nonumber\\
\operatorname{subject~to}~&~ \sum_{i=1}^n (\|x_i-a_i'\|^2-c_i') \le 0,\displaybreak[0]\nonumber\\
&~\sum_{i=1}^n A_ix_i = \mathbf{0}_{\tilde{m}},\displaybreak[0]\nonumber\\
&~\sum_{j\in S_i^{\text{in}}} (\|x_j-a_{ij}''\|^2-c_{ij}'')\le 0,\quad\forall i\in\mathcal{V}^{\text{in}},\displaybreak[0]\nonumber\\
&~\sum_{j\in S_i^{\text{eq}}} A_{ij}^sx_j= \mathbf{0}_{m^s},\quad\forall i\in\mathcal{V}^{\text{eq}}.\label{eq:simuprob}
\end{align}
In the objective function, each $P_i\in\mathbb{R}^{d\times d}$ is symmetric positive semidefinite, $Q_i\in \mathbb{R}^d$, and $\mathcal{I}_{X_i}$ is the indicator function with respect to the set $X_i=\{x\in\mathbb{R}^d|~\|x-a_i\|^2\le c_i\}$ with $a_i\in\mathbb{R}^d$ and $c_i>\|a_i\|^2$, so that we set each $f_i(x_i)=x_i^TP_ix_i+Q_i^Tx_i$ and each $h_i(x_i)=\mathcal{I}_{X_i}(x_i)$. In the constraints, each $a_i',a_{ij}''\in \mathbb{R}^d$, $c_i',c_{ij}''\in\mathbb{R}$, $A_i\in\mathbb{R}^{\tilde{m}\times d}$, $A_{ij}^s\in\mathbb{R}^{m^s\times d}$, and $\mathcal{V}^{\operatorname{in}},S_i^{\operatorname{in}},\mathcal{V}^{\operatorname{eq}},S_i^{\operatorname{eq}}$ are nonempty subsets of $\mathcal{V}=\{1,\ldots,n\}$. We also require $\sum_{i=1}^n c_i'> \sum_{i=1}^n \|a_i'\|^2$ and $\sum_{j\in S_i^{\text{in}}}c_{ij}''>\sum_{j\in S_i^{\text{in}}}\|a_{ij}''\|^2$ $\forall i\in\mathcal{V}^{\text{in}}$. Apparently, problem~\eqref{eq:simuprob} meets Assumption \ref{asm:prob}(a). Moreover, since $\operatorname{dom}(h_i)=X_i$ $\forall i\in\mathcal{V}$ are compact, Assumption \ref{asm:prob}(b)-(e) hold. Besides, Assumption~\ref{asm:prob}(f) is satisfied with $\tilde{x}_i=\mathbf{0}_d$ $\forall i\in \mathcal{V}$.

Since problem~\eqref{eq:simuprob} also obeys the assumptions in \cite{Liang19, Liang19a}, below we simulate the convergence performance of IPLUX, the distributed primal-dual gradient method in \cite{Liang19}, and the distributed dual subgradient method in \cite{Liang19a} when solving \eqref{eq:simuprob}. The simulation settings are as follows: Let $n=30$, $d = 5$, $\tilde{m}=3$, and $m^s=2$. We randomly generate $\mathcal{V}^{\operatorname{in}}$, $S_i^{\operatorname{in}}$ $\forall i\in\mathcal{V}^{\operatorname{in}}$, $\mathcal{V}^{\operatorname{eq}}$, and $S_i^{\operatorname{eq}}$ $\forall i\in\mathcal{V}^{\operatorname{eq}}$ satisfying $|\mathcal{V}^{\operatorname{in}}|=|\mathcal{V}^{\operatorname{eq}}|=15$, $|S_i^{\operatorname{in}}|=4$ $\forall i\in\mathcal{V}^{\operatorname{in}}$, and $|S_i^{\operatorname{eq}}|=4$ $\forall i\in\mathcal{V}^{\operatorname{eq}}$. The remaining problem data is also randomly generated under the conditions imposed above. The underlying interaction graph $\mathcal{G}$ is obtained according to \emph{Case~3} in Section \ref{ssec:graph}, and is shown in Fig.~\ref{fig:simugraph}. All the algorithm parameters are fine-tuned to achieve best possible convergence performance. Moreover, we let the three algorithms start from the same initial primal iterates for fairness.

\begin{figure}[ht]
	\centering
	\includegraphics[scale=0.45]{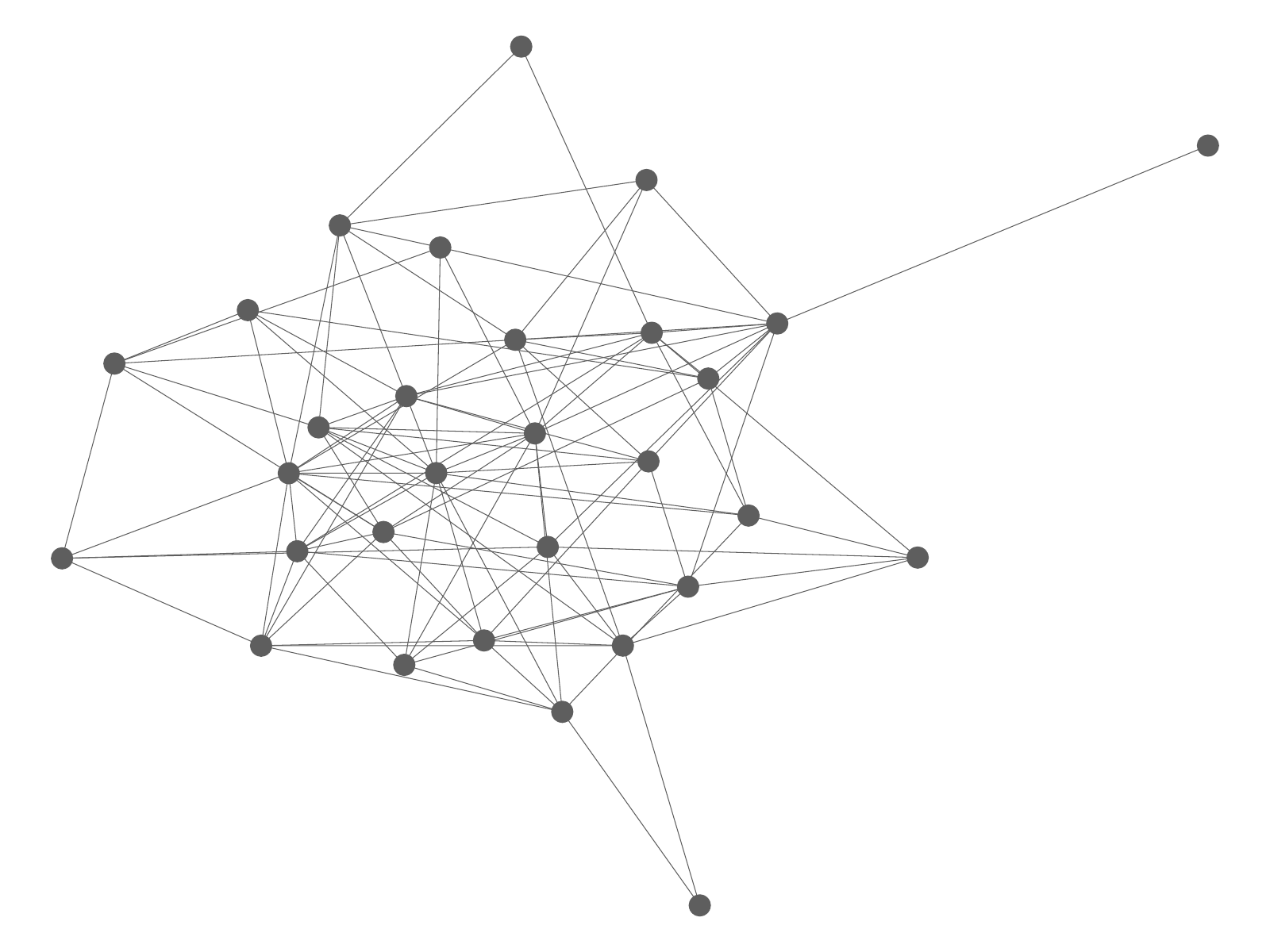}
	\caption{Underlying interaction graph $\mathcal{G}$ induced from problems~\eqref{eq:simuprob} and~\eqref{eq:simuprobnonsmooth}.}
	\label{fig:simugraph}
\end{figure}

We plot the constraint violations and the objective errors generated by the aforementioned three algorithms during $2000$ iterations in Fig.~\ref{fig:smooth}. Here, we use $w_i(k)$ $\forall i\in\mathcal{V}$ to denote the variables in these three algorithms that are theoretically guaranteed to reach primal optimality. Specifically, for the distributed primal-dual algorithm \cite{Liang19}, $w_i(k)$ is the primal iterate associated with node $i\in\mathcal{V}$ at iteration $k$. For the distributed dual subgradient method \cite{Liang19a} and our proposed IPLUX, $w_i(k)$ is the running average of the primal iterates associated with node $i\in\mathcal{V}$ (i.e., $\bar{x}_i(k)$ defined above Corollary~\ref{cor:originalprobconv} with $\bar{x}_i(0)=x_i(0)$). The constraint violation $e_1(k)+e_2(k)+e_3(k)+e_4(k)$ consists of four parts, i.e., $e_1(k)=\max\{\sum_{i=1}^n (\|w_i(k)-a_i'\|^2-c_i'), 0\}$, $e_2(k)=\|\sum_{i=1}^n A_iw_i(k)\|$, $e_3(k)=\sum_{i\in\mathcal{V}^{\operatorname{in}}}\max\{\sum_{j\in S_i^{\operatorname{in}}} (\|w_j(k)-a_{ij}''\|^2-c_{ij}''), 0\}$, and $e_4(k)=\sum_{i\in\mathcal{V}^{\operatorname{eq}}}\|\sum_{j\in S_i^{\operatorname{eq}}}A_{ij}^sw_j(k)\|$. The objective error is defined as $|\sum_{i=1}^n \left(w_i(k)^TP_iw_i(k)\!+\!Q_i^Tw_i(k)\!+\!\mathcal{I}_{X_i}(w_i(k))\right)-\Phi^\star|$, where $\Phi^\star$ is the optimal value of \eqref{eq:simuprob} calculated by CVXPY \cite{Diamond16}. Observe from Fig.~\ref{fig:smooth} that compared to the two existing methods in \cite{Liang19, Liang19a}, IPLUX presents a prominent advantage in convergence speed and accuracy with respect to both optimality and feasibility.

\begin{figure}[!ht]
	\centering
	\subfigure[Constraint violation.]{
		\includegraphics[width=8cm,height=4.8cm]{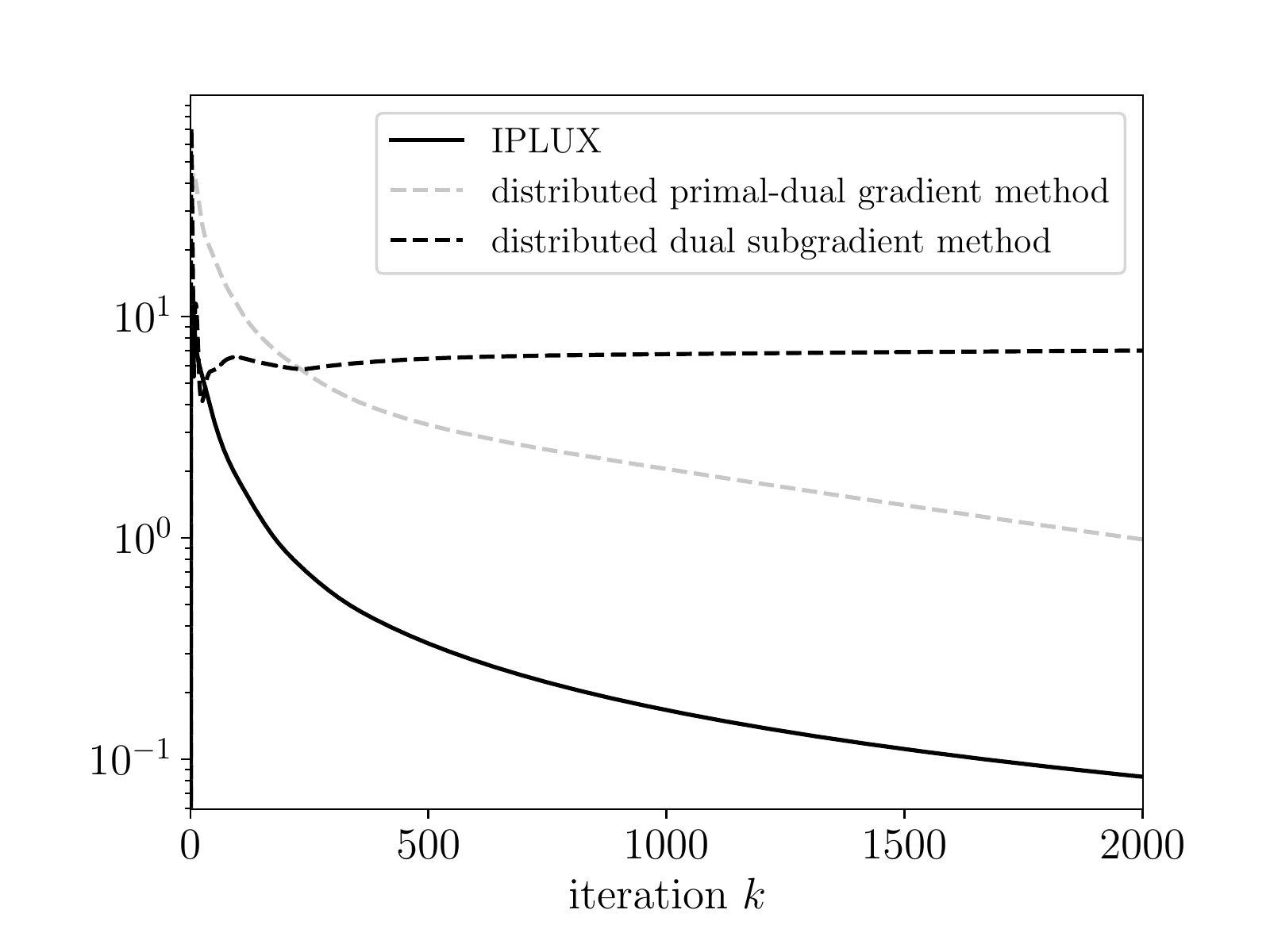}\label{fig:smoothfeasibility}}
	\vfill
	\subfigure[Objective error.]{
		\includegraphics[width=8cm,height=4.8cm]{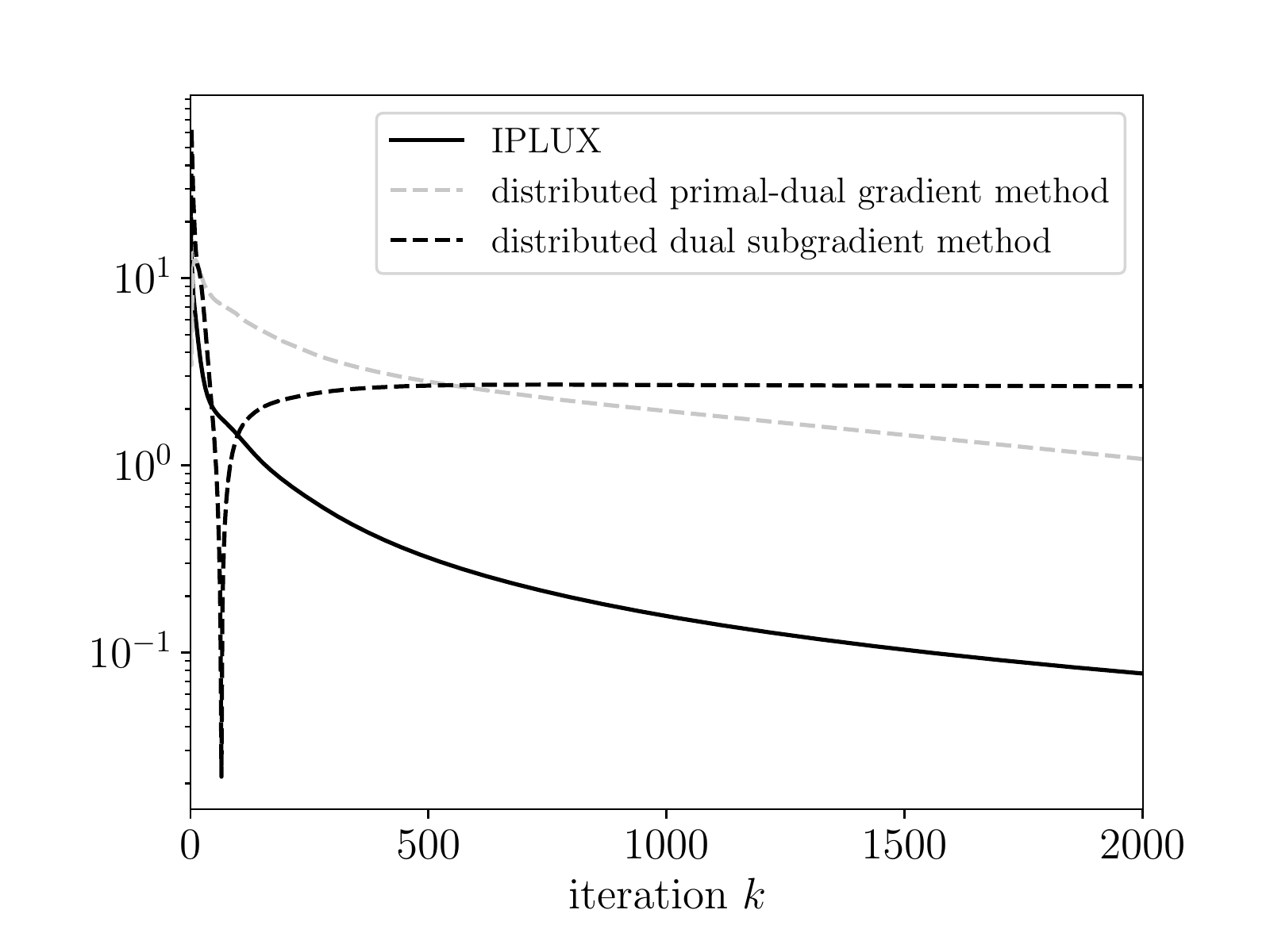}\label{fig:smoothfuncval}}
	\caption{Convergence performance of IPLUX, the distributed primal-dual gradient method \cite{Liang19}, and the distributed dual subgradient method \cite{Liang19a} in solving problem~\eqref{eq:simuprob}.}
	\label{fig:smooth}
\end{figure}

\subsection{Nonsmooth Problem}\label{ssec:nonsmooth}

Now we add the $\ell_1$-norm of each $x_i$ to the objective function of problem~\eqref{eq:simuprob}, and thus consider the following problem:
\begin{align}
\underset{x_1,\ldots,x_n\in\mathbb{R}^d}{\operatorname{minimize}} ~&~ \sum_{i=1}^n\! \left(x_i^TP_ix_i+Q_i^Tx_i+\|x_i\|_1+\mathcal{I}_{X_i}(x_i)\right)\displaybreak[0]\nonumber\\
\operatorname{subject~to}~&~ \text{All the constraints in \eqref{eq:simuprob}}.\label{eq:simuprobnonsmooth}
\end{align}
Since the existing methods in \cite{Liang19, Liang19a} have not been proven to achieve the optimality of \eqref{eq:simuprobnonsmooth}, we only implement IPLUX for this numerical example. All the settings are the same as those in Section~\ref{ssec:smooth}, except that each $h_i(x_i)$ is changed to $h_i(x_i)=\|x_i\|_1+\mathcal{I}_{X_i}(x_i)$.

Fig.~\ref{fig:nonsmooth} illustrates that the constraint violation and the objective error at $\bar{x}_i(k)$ $\forall i\in\mathcal{V}$ in solving \eqref{eq:simuprobnonsmooth} vanish quickly, validating the convergence of IPLUX established in Section~\ref{sec:convanal}.

\begin{figure}[!ht]
\centering
\includegraphics[width=8cm,height=4.8cm]{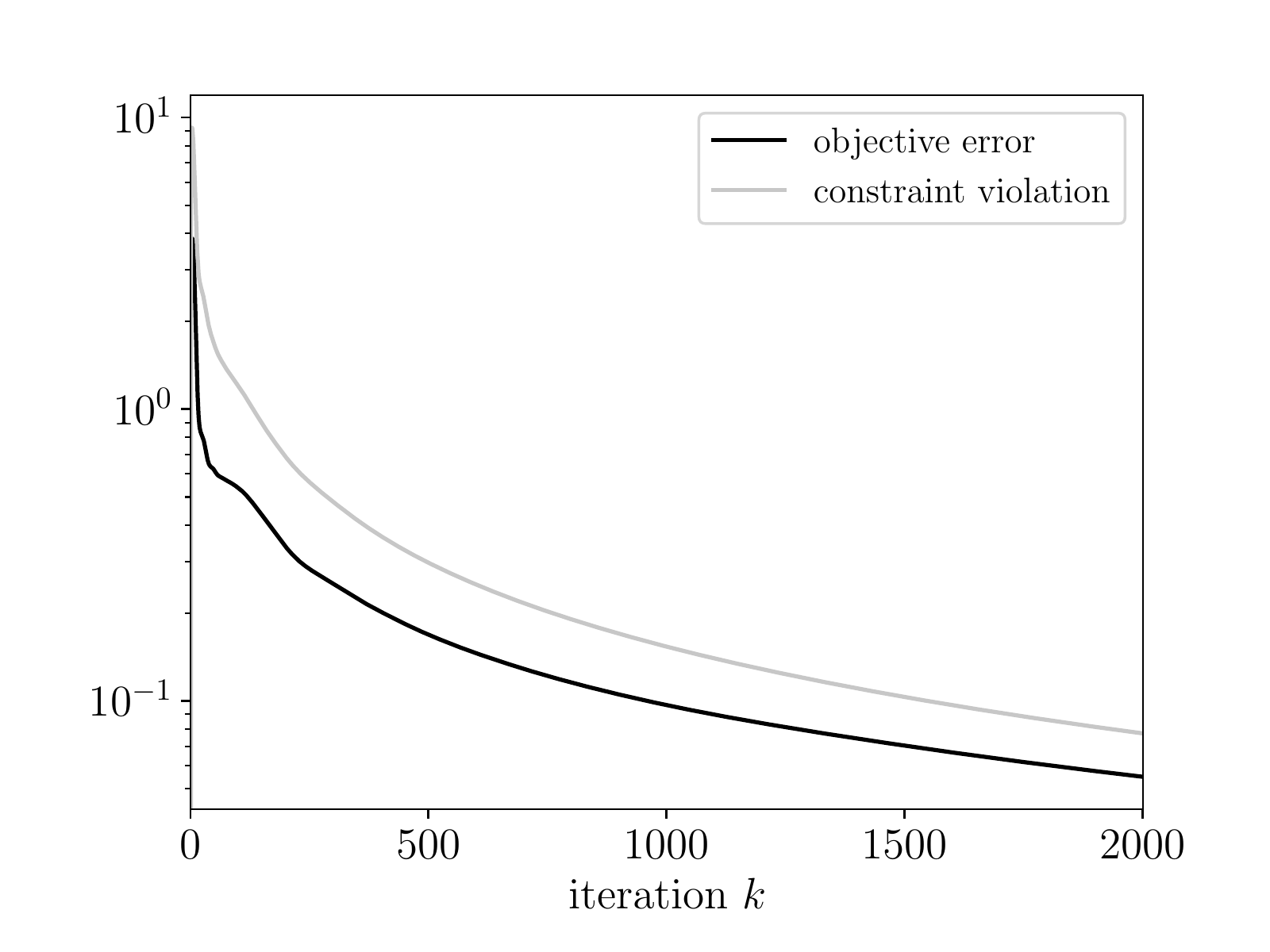}
\caption{Convergence performance of IPLUX in solving problem~\eqref{eq:simuprobnonsmooth}.}
\label{fig:nonsmooth}
\end{figure}

\section{Conclusion}\label{sec:conclusion}
We have developed the integrated primal-dual proximal (IPLUX) algorithm to solve nonsmooth convex optimization problems with both nonlinear inequality and linear equality constraints in a distributed way. Moreover, we have shown that both the objective error and the constraint violation of IPLUX dissipate at $O(1/k)$ rates. IPLUX is able to address more general problems than most prior distributed optimization algorithms, achieve stronger convergence results than the existing methods that also admit coupling nonlinear inequality constraints, and possess superior practical performance in solving constrained distributed convex optimization problems.

\appendix
\subsection{Proof of Proposition \ref{prop:propertyofproby}}\label{ssec:proofofproLipschitz}

Pick any $\mathbf{y}, \mathbf{y}'\in \operatorname{dom}(h)$. Then, they can be written as $\mathbf{y}=[y_1^T, \ldots, y_n^T]^T$ and $\mathbf{y}'=[(y_1')^T, \ldots, (y_n')^T]^T$, where for each $i\in\mathcal{V}$, $y_i=[x_i^T, t_i^T]^T$ and $y_i'=[(x_i')^T, (t_i')^T]^T$, with $x_i, x_i'\in \operatorname{dom}(h_i)$ and $t_i, t_i'\in \mathbb{R}^{\tilde{p}}$. 

First of all, since $f(\mathbf{y})=\sum_{i\in\mathcal{V}}f_i(x_i)$ and $h(\mathbf{y})=\sum_{i\in\mathcal{V}}h_i(x_i)$, Assumption~\ref{asm:prob}(a) leads to the convexity of $f$ and $h$ on $\operatorname{dom}(h)$. Since $\Phi(\mathbf{y})=f(\mathbf{y})+h(\mathbf{y})$, $\Phi$ is also convex on $\operatorname{dom}(h)$. Therefore, property (a) is satisfied.

Due to Assumption~\ref{asm:prob}(b), $\|\nabla f(\mathbf{y})-\nabla f(\mathbf{y}')\|^2=\sum_{i\in \mathcal{V}} \|\nabla f_i(x_i)-\nabla f_i(x_i')\|^2\le L_f^2\sum_{i\in \mathcal{V}} \|x_i-x_i'\|^2\le L_f^2\|\mathbf{y}-\mathbf{y}'\|^2$, i.e., property (b) holds. 

To prove property (c), note from the definition of $\mathbf{G}$ that
\begin{align}
&\|\mathbf{G}(\mathbf{y})-\mathbf{G}(\mathbf{y}')\|^2=\sum_{i\in \mathcal{V}}\|(g_i(x_i)-t_i)-(g_i(x_i')-t_i')\|^2\nonumber\displaybreak[0]\\
&\qquad\qquad+\sum_{i\in\mathcal{V}^{\operatorname{in}}}\|\sum_{j\in S_i^{\operatorname{in}}} (g_{ij}^s(x_j)-g_{ij}^s(x_j'))\|^2.\label{eq:Lipschitzpropeq1}
\end{align}
Note that for each $i\in\mathcal{V}$, $\|(g_i(x_i)-t_i)-(g_i(x_i')-t_i')\|^2=\|g_i(x_i)-g_i(x_i')\|^2+\|t_i-t_i'\|^2-2\langle g_i(x_i)-g_i(x_i'), t_i-t_i'\rangle\le(1+\frac{1}{L_g^2})\|g_i(x_i)\!-\!g_i(x_i')\|^2+(1+L_g^2)\|t_i-t_i'\|^2$. Then, using Assumption~\ref{asm:prob}(c),
\begin{align}
&\sum_{i\in \mathcal{V}}\|(g_i(x_i)-t_i)-(g_i(x_i')-t_i')\|^2\nonumber\displaybreak[0]\\
\le&\sum_{i\in \mathcal{V}}\Bigl((1+\frac{1}{L_g^2})\cdot L_g^2\|x_i-x_i'\|^2+(1+L_g^2)\|t_i\!-\!t_i'\|^2\Bigr)\nonumber\displaybreak[0]\\
=&(1+L_g^2)\|\mathbf{y}-\mathbf{y}'\|^2.\label{eq:sumgtgt<=1Lyy}
\end{align}
On the other hand, from Assumption~\ref{asm:prob}(d),
\begin{align}
&\sum_{i\in \mathcal{V}^{\operatorname{in}}}\Big\|\sum_{j\in S_i^{\operatorname{in}}}(g_{ij}^s(x_j)-g_{ij}^s(x_j'))\Big\|^2\nonumber\displaybreak[0]\\
\le&\sum_{i\in \mathcal{V}^{\operatorname{in}}}|S_i^{\operatorname{in}}|\sum_{j\in S_i^{\operatorname{in}}}\| g_{ij}^s(x_j)-g_{ij}^s(x_j')\|^2\nonumber\displaybreak[0]\\
\le&\sum_{i\in \mathcal{V}^{\operatorname{in}}}|S_i^{\operatorname{in}}|\sum_{j\in S_i^{\operatorname{in}}}L_{g^s}^2\|x_j-x_j'\|^2\nonumber\displaybreak[0]\\
=&\sum_{i\in\mathcal{V}}\sum_{j\in\mathcal{V}^{\operatorname{in}}:i\in S_j^{\operatorname{in}}}|S_j^{\operatorname{in}}|L_{g^s}^2\|x_i-x_i'\|^2\nonumber\displaybreak[0]\\
\le&\Bigl(\max_{i\in \mathcal{V}}\sum_{j\in\mathcal{V}^{\operatorname{in}}:i\in S_j^{\operatorname{in}}}|S_j^{\operatorname{in}}|\Bigr)L_{g^s}^2\|\mathbf{y}-\mathbf{y}'\|^2.\label{eq:sumsumgg<=maxCLyy}
\end{align}
Incorporating \eqref{eq:sumgtgt<=1Lyy} and \eqref{eq:sumsumgg<=maxCLyy} into \eqref{eq:Lipschitzpropeq1} leads to property (c).

Next, we prove property (d) by constructing an optimal solution $\mathbf{y}^\star$ to problem \eqref{eq:originalprobleminy}. To do so, let $\mathbf{x}^\star=[(x_1^\star)^T,\ldots,(x_n^\star)^T]^T$ be an optimum of \eqref{eq:prob}, which exists due to Assumption~\ref{asm:prob}(e). Then, let $t_i^\star=g_i(x_i^\star)-\frac{1}{n}\sum_{j\in \mathcal{V}} g_j(x_j^\star)$ $\forall i\in \mathcal{V}$, and let $\mathbf{y}^\star=[(y_1^\star)^T, \ldots, (y_n^\star)^T]^T$ with $y_i^\star=[(x_i^\star)^T, (t_i^\star)^T]^T$ $\forall i\in \mathcal{V}$. Since $\mathbf{x}^\star$ satisfies constraint (2a) in \eqref{eq:prob}, we have $t_i^\star\ge g_i(x_i^\star)$. Also observe that $\sum_{i\in\mathcal{V}}t_i^\star=\mathbf{0}_{\tilde{p}}$. It follows that $\mathbf{y}^\star$ satisfies \eqref{eq:g<=t}, \eqref{eq:sumt=0}, and (2b)-(2d) in problem~\eqref{eq:prob}. As is shown in Section~\ref{ssec:probtrans}, these constraints are equivalent to the constraints in \eqref{eq:originalprobleminy}, i.e., $\mathbf{y}^\star$ is feasible to problem~\eqref{eq:originalprobleminy}. Moreover, because $\Phi(\mathbf{y}^\star)=\sum_{i\in \mathcal{V}} (f_i(x_i^\star)+h_i(x_i^\star))$ which means $\mathbf{y}^\star$ attains the optimal value of problem~\eqref{eq:prob} and because of the equivalence between \eqref{eq:prob} and \eqref{eq:originalprobleminy}, $\mathbf{y}^\star$ is an optimum to \eqref{eq:originalprobleminy}.

Finally, we prove property (e) by showing that the Slater's condition holds for problem~\eqref{eq:originalprobleminy}. Let $\tilde{\mathbf{x}}=[(\tilde{x}_1)^T,\ldots,(\tilde{x}_n)^T]^T$ be a Slater's point as is described in Assumption~\ref{asm:prob}(f). Then, let $\tilde{t}_i=g_i(\tilde{x}_i)-\frac{1}{n}\sum_{j\in \mathcal{V}} g_j(\tilde{x}_j)$ $\forall i\in \mathcal{V}$, $\tilde{y}_i=[(\tilde{x}_i)^T, (\tilde{t}_i)^T]^T$ $\forall i\in \mathcal{V}$, and $\tilde{\mathbf{y}}=[(\tilde{y}_1)^T, \ldots, (\tilde{y}_n)^T]^T$. Similar to the proof of property (d) above, it can be shown that $\tilde{\mathbf{y}}\in \operatorname{rel\;int}(\operatorname{dom}(h))$ is a Slater's point of \eqref{eq:originalprobleminy}, so that property (e) holds \cite{Bertsekas99}.

\subsection{Proof of Lemma \ref{lemma:boundedsumF}}\label{ssec:proofoflemmaboundedsumF}

To prove \eqref{eq:sumPhiykystarupperbound}, it suffices to show that
\begin{equation}\label{eq:FandRk}
\Phi(\mathbf{y}(\ell+1))-\Phi(\mathbf{y}^\star) \le R(\ell)-R(\ell+1),\quad\forall \ell\ge 0.
\end{equation}
Below, we first derive an upper bound on $\Phi(\mathbf{y}(\ell+1))-\Phi(\mathbf{y}^\star)=f(\mathbf{y}(\ell+1))+h(\mathbf{y}(\ell+1))-f(\mathbf{y}^\star)-h(\mathbf{y}^\star)$, and then show that this upper bound cannot exceed $R(\ell)-R(\ell+1)$ due to the condition on $\alpha$. 

For convenience, we define $\tilde{\Delta}_1^\ell(\mathbf{y})=\langle\mathbf{v}(\ell), \mathbf{y}\rangle+\frac{\gamma\lambda^2}{2}\|\mathbf{y}-\mathbf{y}(\ell)+\frac{1}{\lambda^2}(\mathbf{B}^s)^T(\mathbf{B}^s\mathbf{y}(\ell)-\mathbf{c}^s)\|^2$, $\tilde{\Delta}_2^\ell(\mathbf{y})=\langle W\mathbf{u}(\ell)-\frac{1}{\rho}\mathbf{z}(\ell), \mathbf{B}\mathbf{y}-\mathbf{c}\rangle+\frac{1}{2\rho}\|\mathbf{B}\mathbf{y}-\mathbf{c}\|^2$, and $\tilde{\Delta}_3^\ell(\mathbf{y})=\langle\mathbf{q}(\ell)+\mathbf{G}(\mathbf{y}(\ell)),\mathbf{G}(\mathbf{y})\rangle+\frac{\alpha}{2}\|\mathbf{y}-\mathbf{y}(\ell)\|^2$ for each $\ell\ge 0$. Thus, $\mathcal{L}^{\ell}(\mathbf{y})$ defined below \eqref{eq:finalupdatey} can be rewritten as 
\begin{align}\label{eq:L=nablafyyhsumtildeDelta}
\mathcal{L}^{\ell}(\mathbf{y})=\langle \nabla f(\mathbf{y}(\ell)), \mathbf{y}-\mathbf{y}(\ell)\rangle+h(\mathbf{y})+\sum_{i=1}^3\tilde{\Delta}_i^\ell(\mathbf{y}).
\end{align}
Note that $\mathcal{L}^\ell(\mathbf{y})-\frac{1}{2\rho}\|\mathbf{B}\mathbf{y}-\mathbf{c}\|^2$ is strongly convex with convexity parameter $\alpha+\gamma\lambda^2>0$. Also note that $\mathbf{0}\in \partial \mathcal{L}^\ell(\mathbf{y}(\ell+1))$ because of \eqref{eq:finalupdatey}. It follows that
\begin{align}
&(\mathcal{L}^\ell(\mathbf{y}^\star)-\frac{1}{2\rho}\|\mathbf{B}\mathbf{y}^\star-\mathbf{c}\|^2)\nonumber\displaybreak[0]\\
&-(\mathcal{L}^\ell(\mathbf{y}(\ell+1))-\frac{1}{2\rho}\|\mathbf{B}\mathbf{y}(\ell+1)-\mathbf{c}\|^2)\nonumber\displaybreak[0]\\
\ge&\langle -\frac{1}{\rho}\mathbf{B}^T(\mathbf{B}\mathbf{y}(\ell+1)-\mathbf{c}), \mathbf{y}^\star-\mathbf{y}(\ell+1)\rangle\nonumber\displaybreak[0]\\
&+\frac{\alpha+\gamma\lambda^2}{2}\|\mathbf{y}^\star-\mathbf{y}(\ell+1)\|^2.\label{eq:L12rhoBycL12rhoByc<=}
\end{align}
From Proposition~\ref{prop:propertyofproby}(b), $\nabla f$ is Lipschitz continuous on $\operatorname{dom}(h)$. Also, $f$ is convex on $\operatorname{dom}(h)$ due to Assumption~\ref{asm:prob}(a), and $\mathbf{y}(\ell),\mathbf{y}(\ell+1),\mathbf{y}^\star\in\operatorname{dom}(h)$. It follows that
\begin{align*}
f(\mathbf{y}(\ell+1)) \le & f(\mathbf{y}(\ell))+\langle \nabla f(\mathbf{y}(\ell)), \mathbf{y}(\ell+1)-\mathbf{y}(\ell)\rangle\\
&+\frac{L_f}{2}\|\mathbf{y}(\ell+1)-\mathbf{y}(\ell)\|^2,\displaybreak[0]\\
-f(\mathbf{y}^\star)\le& -f(\mathbf{y}(\ell))-\langle \nabla f(\mathbf{y}(\ell)), \mathbf{y}^\star-\mathbf{y}(\ell)\rangle.
\end{align*}
Adding the above two inequalities yields
\begin{align}
&\langle \nabla f(\mathbf{y}(\ell)), \mathbf{y}(\ell+1)-\mathbf{y}^\star\rangle\nonumber\displaybreak[0]\\
\ge& f(\mathbf{y}(\ell+1))-f(\mathbf{y}^\star)-\frac{L_f}{2}\|\mathbf{y}(\ell+1)-\mathbf{y}(\ell)\|^2.\label{eq:fk1minusfstar}
\end{align}
By incorporating \eqref{eq:fk1minusfstar} and \eqref{eq:L=nablafyyhsumtildeDelta} into \eqref{eq:L12rhoBycL12rhoByc<=}, we obtain
\begin{align}
& \Phi(\mathbf{y}(\ell+1))-\Phi(\mathbf{y}^\star)\le\sum_{i=1}^3\Bigl(\tilde{\Delta}_i^\ell(\mathbf{y}^\star)-\tilde{\Delta}_i^\ell(\mathbf{y}(\ell+1))\Bigr)\nonumber\displaybreak[0]\\
&+\frac{1}{2\rho}\|\mathbf{B}\mathbf{y}(\ell+1)-\mathbf{c}\|^2-\frac{1}{2\rho}\|\mathbf{B}\mathbf{y}^\star-\mathbf{c}\|^2\nonumber\displaybreak[0]\\
&+\frac{1}{\rho}\langle\mathbf{B}^T(\mathbf{B}\mathbf{y}(\ell+1)-\mathbf{c}), \mathbf{y}^\star-\mathbf{y}(\ell+1)\rangle\nonumber\displaybreak[0]\\
&-\frac{\alpha\!+\!\gamma\lambda^2}{2}\|\mathbf{y}^\star\!-\!\mathbf{y}(\ell+1)\|^2\!+\!\frac{L_f}{2}\|\mathbf{y}(\ell+1)-\mathbf{y}(\ell)\|^2.\label{eq:Phiyk1ystar}
\end{align}

Subsequently, we bound $\tilde{\Delta}_i^\ell(\mathbf{y}^\star)-\tilde{\Delta}_i^\ell(\mathbf{y}(\ell+1))$, $\forall i=1,2,3$ in \eqref{eq:Phiyk1ystar}. To do so, we introduce the following lemmas.

\begin{lemma}\label{lem:matrixlemma}
For any symmetric positive semidefinite matrix $M$ and vectors $\mathnormal{a}', \mathnormal{a}''$ of proper dimensions, 
\begin{equation}\label{eq:matrixlemma1}
\langle M(\mathnormal{a}'-\mathnormal{a}''),\mathnormal{a}''\rangle\le\frac{1}{2}(\|\mathnormal{a}'\|_{M}^2-\|\mathnormal{a}''\|_{M}^2).
\end{equation}
If, in addition, $\mathnormal{a}'\in\operatorname{Range}(M)$, then
\begin{equation}\label{eq:matrixlemma2}
\langle \mathnormal{a}',\mathnormal{a}''\rangle=\langle \mathnormal{a}',M^\dag M\mathnormal{a}''\rangle.
\end{equation}
\end{lemma}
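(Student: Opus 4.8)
The plan is to establish the two identities separately, each directly from the definition $\|x\|_M^2=x^TM x$ together with elementary properties of the Moore--Penrose inverse; no iterate-level machinery is needed here, since this is a purely linear-algebraic fact about a fixed symmetric positive semidefinite matrix $M$.

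For \eqref{eq:matrixlemma1}, I would simply compute the gap between the two sides. Writing $\langle M(a'-a''),a''\rangle=(a')^TMa''-(a'')^TMa''$ and $\frac{1}{2}(\|a'\|_M^2-\|a''\|_M^2)=\frac{1}{2}(a')^TMa'-\frac{1}{2}(a'')^TMa''$, a direct expansion shows that the difference of the right-hand side and the left-hand side equals $\frac{1}{2}\bigl((a')^TMa'-2(a')^TMa''+(a'')^TMa''\bigr)=\frac{1}{2}\|a'-a''\|_M^2$. Since $M$ is positive semidefinite, this quantity is nonnegative, which is exactly \eqref{eq:matrixlemma1}.

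For \eqref{eq:matrixlemma2}, the extra hypothesis $a'\in\operatorname{Range}(M)$ lets me write $a'=Mw$ for some vector $w$. Using the symmetry of $M$, the left-hand side becomes $\langle a',a''\rangle=(Mw)^Ta''=w^TMa''$, while the right-hand side becomes $\langle a',M^\dag Ma''\rangle=(Mw)^TM^\dag Ma''=w^TMM^\dag Ma''$. The defining property $MM^\dag M=M$ of the Moore--Penrose inverse then collapses the right-hand side to $w^TMa''$, matching the left-hand side. This is the only slightly non-routine ingredient, and it is standard; alternatively one may invoke that $M^\dag M$ is the orthogonal projector onto $\operatorname{Range}(M)$ for symmetric $M$, so that $M^\dag Ma'=a'$ after moving the (symmetric) projector onto $a'$.

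There is no substantial obstacle in this lemma: part one reduces to nonnegativity of a weighted quadratic form and part two to $MM^\dag M=M$, both immediate. The only point requiring a moment's care is ensuring the symmetry of $M$ is used correctly in \eqref{eq:matrixlemma2} so that the projector/pseudoinverse identity applies, which the substitution $a'=Mw$ handles cleanly.
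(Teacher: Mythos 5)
Your proposal is correct and matches the paper's argument in substance: part one is the same quadratic-form identity (the gap between the two sides is $\tfrac{1}{2}\|a'-a''\|_M^2\ge 0$ by positive semidefiniteness), and part two uses the same Moore--Penrose facts, merely phrased via $a'=Mw$ and $MM^\dag M=M$ instead of the paper's equivalent route $a'=M^\dag M a'$ together with symmetry of $M^\dag M$ (which you also note as an alternative).
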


\begin{proof}
Since $\langle M(\mathnormal{a}'-\mathnormal{a}''),\mathnormal{a}''\rangle=\frac{1}{2}(\|\mathnormal{a}'\|_{M}^2-\|\mathnormal{a}''\|_{M}^2-\|\mathnormal{a}''-\mathnormal{a}'\|_{M}^2)$, we obtain \eqref{eq:matrixlemma1}. Also, \eqref{eq:matrixlemma2} holds because of $\mathnormal{a}'=M^\dag M\mathnormal{a}'$ \cite{yanai11} and because $M^\dag M$ is symmetric.
\end{proof}

\begin{lemma}\label{lemma:qkproperties}
For each $\ell\ge0$,
\begin{align}
&\mathbf{q}(\ell)\ge\mathbf{0},\nonumber\displaybreak[0]\\
&\|\mathbf{q}(\ell+1)\|\ge\|\mathbf{G}(\mathbf{y}(\ell+1))\|,\label{eq:qkplus1normlargethang}\displaybreak[0]\\
&\langle\mathbf{q}(\ell),\mathbf{G}(\mathbf{y}(\ell+1))\rangle\ge\frac{1}{2}(\|\mathbf{q}(\ell+1)\|^2-\|\mathbf{q}(\ell)\|^2)\nonumber\displaybreak[0]\\
&\qquad\qquad\qquad\qquad\qquad-\|\mathbf{G}(\mathbf{y}(\ell+1))\|^2.\label{eq:successivepropertyofqk}
\end{align} 
\end{lemma}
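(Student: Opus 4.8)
The plan is to establish all three assertions coordinatewise, exploiting the elementwise structure of the virtual-queue update $\mathbf{q}(\ell+1)=\max\{-\mathbf{G}(\mathbf{y}(\ell+1)),\,\mathbf{q}(\ell)+\mathbf{G}(\mathbf{y}(\ell+1))\}$. Throughout I fix $\ell\ge 0$ and a coordinate $j$, abbreviating $q_j:=[\mathbf{q}(\ell)]_j$, $q_j^+:=[\mathbf{q}(\ell+1)]_j$, and $G_j:=[\mathbf{G}(\mathbf{y}(\ell+1))]_j$, so that $q_j^+=\max\{-G_j,\,q_j+G_j\}$.

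For the nonnegativity $\mathbf{q}(\ell)\ge\mathbf{0}$, I would argue by induction on $\ell$. The base case holds since $\mathbf{q}(0)=\max\{-\mathbf{G}(\mathbf{y}(0)),\mathbf{0}\}\ge\mathbf{0}$ by construction. For the inductive step, assuming $q_j\ge 0$ I note $q_j+G_j\ge G_j$, whence $q_j^+=\max\{-G_j,\,q_j+G_j\}\ge\max\{-G_j,\,G_j\}=|G_j|\ge 0$. This same chain of inequalities simultaneously yields $q_j^+\ge|G_j|$ for every coordinate, and since $q_j^+\ge 0$ I may square and sum over $j$ to obtain $\|\mathbf{q}(\ell+1)\|^2=\sum_j (q_j^+)^2\ge\sum_j G_j^2=\|\mathbf{G}(\mathbf{y}(\ell+1))\|^2$, which is exactly \eqref{eq:qkplus1normlargethang}. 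Thus the first two assertions come out together with essentially no extra work, with the second relying on the first through the bound $q_j+G_j\ge G_j$.

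For the third inequality I would again sum a scalar bound established coordinatewise, namely $q_j G_j\ge\frac{1}{2}((q_j^+)^2-q_j^2)-G_j^2$. Here I split on which term attains the maximum. If $q_j+G_j\ge -G_j$, then $q_j^+=q_j+G_j$ and $(q_j^+)^2=q_j^2+2q_jG_j+G_j^2$, so $\frac{1}{2}((q_j^+)^2-q_j^2)=q_jG_j+\frac{1}{2}G_j^2$, which gives the claimed bound with room to spare since $-\frac{1}{2}G_j^2\ge -G_j^2$. Otherwise $q_j^+=-G_j$ and $(q_j^+)^2=G_j^2$, and the desired inequality rearranges to the trivially true $\frac{1}{2}(q_j+G_j)^2\ge 0$. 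Summing over $j$ then yields \eqref{eq:successivepropertyofqk}.

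The one point requiring care is the second branch of the case analysis for the third claim: one must recognize that after substituting $q_j^+=-G_j$ the inequality collapses to a perfect square, rather than attempting to bound the two terms separately. I expect no genuine obstacle beyond keeping the elementwise bookkeeping straight; note in particular that \eqref{eq:successivepropertyofqk} does not even require the nonnegativity of $\mathbf{q}(\ell)$, so the three parts may indeed be proven in the order stated.
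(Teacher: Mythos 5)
Your proof is correct, and it is essentially the same argument the paper relies on: the paper omits the proof of Lemma~\ref{lemma:qkproperties}, deferring to the proofs of Lemmas~3 and~4 in \cite{YuH17}, which proceed by exactly your coordinatewise case analysis of the update $q_j^+=\max\{-G_j,\,q_j+G_j\}$ (induction for nonnegativity, the bound $q_j^+\ge|G_j|$ for \eqref{eq:qkplus1normlargethang}, and the two-case split collapsing to the perfect square $\tfrac{1}{2}(q_j+G_j)^2\ge 0$ for \eqref{eq:successivepropertyofqk}). Your observation that the third inequality does not need nonnegativity of $\mathbf{q}(\ell)$ is a nice touch, though nonnegativity is still needed for \eqref{eq:qkplus1normlargethang} via $q_j+G_j\ge G_j$, exactly as you use it.
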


\begin{proof}
The proof is very similar to the proofs of \cite[Lemmas~3 and~4]{YuH17}, and therefore is omitted.
\end{proof}

\subsubsection{Upper bound on $\tilde{\Delta}_1^\ell(\mathbf{y}^\star)-\tilde{\Delta}_1^\ell(\mathbf{y}(\ell+1))$}

Let $\mathnormal{a}'=\mathbf{v}(\ell)$, $\mathnormal{a}''=\mathbf{y}^\star-\mathbf{y}(\ell+1)$, and $M=(\mathbf{B}^s)^T\mathbf{B}^s$ in Lemma~\ref{lem:matrixlemma}. Recall from Section~\ref{ssec:proximalsparse} that $\mathbf{v}(\ell)\in\operatorname{Range}((\mathbf{B}^s)^T)$. Since $\operatorname{Range}((\mathbf{B}^s)^T)=\operatorname{Range}(M)$, we guarantee $\mathnormal{a}'\in\operatorname{Range}(M)$. Hence, from \eqref{eq:matrixlemma2}, 
\begin{align}
&\langle \mathbf{v}(\ell), \mathbf{y}^\star\rangle-\langle \mathbf{v}(\ell), \mathbf{y}(\ell+1)\rangle=\langle \mathbf{v}(\ell), \mathbf{y}^\star-\mathbf{y}(\ell+1)\rangle\nonumber\displaybreak[0]\\
&=\langle\mathbf{v}(\ell), ((\mathbf{B}^s)^T\mathbf{B}^s)^\dag(\mathbf{B}^s)^T\mathbf{B}^s(\mathbf{y}^\star-\mathbf{y}(\ell+1))\rangle.\label{eq:vyvy=BBvBByy}
\end{align}
Moreover, because of \eqref{eq:sparsetildevupdate} and because $\mathbf{B}^s\mathbf{y}^\star=\mathbf{c}^s$, 
\begin{align}
(\mathbf{B}^s)^T\mathbf{B}^s(\mathbf{y}^\star-\mathbf{y}(\ell+1))=\frac{1}{\gamma}(\mathbf{v}(\ell)-\mathbf{v}(\ell+1)).\label{eq:BByy=1gammavv}
\end{align}
By substituting \eqref{eq:BByy=1gammavv} into \eqref{eq:vyvy=BBvBByy}, we obtain
\begin{align}\label{eq:tildevystaryk1}
&\langle \mathbf{v}(\ell), \mathbf{y}^\star\rangle-\langle \mathbf{v}(\ell), \mathbf{y}(\ell+1)\rangle\nonumber\displaybreak[0]\\
=& \frac{1}{\gamma}\langle \mathbf{v}(\ell), ((\mathbf{B}^s)^T\mathbf{B}^s)^\dag(\mathbf{v}(\ell)-\mathbf{v}(\ell+1))\rangle\nonumber\displaybreak[0]\\
=&\frac{1}{2\gamma}(\|\mathbf{v}(\ell)\|_{((\mathbf{B}^s)^T\mathbf{B}^s)^\dag}^2-\|\mathbf{v}(\ell+1)\|_{((\mathbf{B}^s)^T\mathbf{B}^s)^\dag}^2)\nonumber\displaybreak[0]\\
&+\frac{1}{2\gamma}\|\mathbf{v}(\ell)-\mathbf{v}(\ell+1)\|_{((\mathbf{B}^s)^T\mathbf{B}^s)^\dag}^2\nonumber\displaybreak[0]\\
=&\frac{1}{2\gamma}(\|\mathbf{v}(\ell)\|_{((\mathbf{B}^s)^T\mathbf{B}^s)^\dag}^2-\|\mathbf{v}(\ell+1)\|_{((\mathbf{B}^s)^T\mathbf{B}^s)^\dag}^2)\nonumber\displaybreak[0]\\
&+\frac{\gamma}{2}\|\mathbf{y}(\ell+1)-\mathbf{y}^\star\|_{(\mathbf{B}^s)^T\mathbf{B}^s}^2,
\end{align}
where the last step is due again to \eqref{eq:BByy=1gammavv}. 

On the other hand, let $\tilde{\mathbf{B}}:=I_N-\frac{1}{\lambda^2}(\mathbf{B}^s)^T\mathbf{B}^s$ for convenience. Since $\lambda\ge \|\mathbf{B}^s\|_2$, $I_N\succeq\tilde{\mathbf{B}}\succeq \mathbf{O}$. Also, because $\mathbf{B}^s\mathbf{y}^\star=\mathbf{c}^s$, we have $\mathbf{y}^\star-\mathbf{y}(\ell)+\frac{1}{\lambda^2}(\mathbf{B}^s)^T(\mathbf{B}^s\mathbf{y}(\ell)-\mathbf{c}^s)=\tilde{\mathbf{B}}(\mathbf{y}^\star-\mathbf{y}(\ell))$. It follows that
\begin{align}
&\|\mathbf{y}^\star-\mathbf{y}(\ell)+\frac{1}{\lambda^2}(\mathbf{B}^s)^T(\mathbf{B}^s\mathbf{y}(\ell)-\mathbf{c}^s)\|^2\nonumber\displaybreak[0]\\
&-\|\mathbf{y}(\ell\!+\!1)\!-\!\mathbf{y}(\ell)\!+\!\frac{1}{\lambda^2}(\mathbf{B}^s)^T(\mathbf{B}^s\mathbf{y}(\ell)\!-\!\mathbf{c}^s)\|^2\nonumber\displaybreak[0]\\
=&-\|\mathbf{y}(\ell+1)-\mathbf{y}^\star\|^2-2\langle \mathbf{y}(\ell+1)-\mathbf{y}^\star,\tilde{\mathbf{B}}(\mathbf{y}^\star-\mathbf{y}(\ell))\rangle\nonumber\displaybreak[0]\\
\le&-\|\mathbf{y}(\ell+1)-\mathbf{y}^\star\|^2+\|\mathbf{y}(\ell+1)-\mathbf{y}^\star\|_{\tilde{\mathbf{B}}}^2+\|\mathbf{y}(\ell)-\mathbf{y}^\star\|_{\tilde{\mathbf{B}}}^2\nonumber\displaybreak[0]\\
\le&-\frac{1}{\lambda^2}\|\mathbf{y}(\ell+1)-\mathbf{y}^\star\|_{(\mathbf{B}^s)^T\mathbf{B}^s}^2+\|\mathbf{y}(\ell)-\mathbf{y}^\star\|^2.\label{eq:ystarykyk1yk}
\end{align}

Finally, combining \eqref{eq:tildevystaryk1} and \eqref{eq:ystarykyk1yk} gives 
\begin{align}
&\tilde{\Delta}_1^\ell(\mathbf{y}^\star)-\tilde{\Delta}_1^\ell(\mathbf{y}(\ell+1))\le \frac{\gamma\lambda^2}{2}\|\mathbf{y}(\ell)-\mathbf{y}^\star\|^2\nonumber\displaybreak[0]\\
&+\frac{1}{2\gamma}(\|\mathbf{v}(\ell)\|_{((\mathbf{B}^s)^T\mathbf{B}^s)^\dag}^2\!-\!\|\mathbf{v}(\ell+1)\|_{((\mathbf{B}^s)^T\mathbf{B}^s)^\dag}^2).\label{eq:M2starminusM2yk1upperbound}
\end{align}
		
\subsubsection{Upper bound on $\tilde{\Delta}_2^\ell(\mathbf{y}^\star)-\tilde{\Delta}_2^\ell(\mathbf{y}(\ell+1))$}		
From \eqref{eq:relationofdualandprimal},
\begin{align}
&\langle W\mathbf{u}(\ell)-\frac{1}{\rho}\mathbf{z}(\ell), \mathbf{B}\mathbf{y}^\star-\mathbf{c}\rangle\nonumber\displaybreak[0]\\
&-\langle W\mathbf{u}(\ell)-\frac{1}{\rho}\mathbf{z}(\ell), \mathbf{B}\mathbf{y}(\ell+1)-\mathbf{c}\rangle\nonumber\displaybreak[0]\\
=&\langle W\mathbf{u}(\ell)-\frac{1}{\rho}\mathbf{z}(\ell), \mathbf{B}(\mathbf{y}^\star-\mathbf{y}(\ell+1))\rangle\nonumber\displaybreak[0]\\
=&\langle \mathbf{u}(\ell+1)-\frac{1}{\rho}(\mathbf{B}\mathbf{y}(\ell+1)-\mathbf{c}), \mathbf{B}(\mathbf{y}^\star-\mathbf{y}(\ell+1))\rangle.\label{eq:WurhozByy=}
\end{align}
Below, we bound the term $\langle \mathbf{u}(\ell+1), \mathbf{B}(\mathbf{y}^\star-\mathbf{y}(\ell+1))\rangle$ on the right-hand side of \eqref{eq:WurhozByy=}. To this end, note from \eqref{eq:relationofdualandprimal}, \eqref{eq:pextrazupdate}, and $\mathbf{z}^\star=\mathbf{B}\mathbf{y}^\star-\mathbf{c}$ that
\begin{align}
&\mathbf{B}(\mathbf{y}^\star\!-\!\mathbf{y}(\ell+1)) \!=\! \mathbf{z}^\star-\mathbf{z}(\ell)+\rho(W\mathbf{u}(\ell)\!-\!\mathbf{u}(\ell+1))\nonumber\displaybreak[0]\\
&=\mathbf{z}^\star-\mathbf{z}(\ell+1)+\rho W(\mathbf{u}(\ell)-\mathbf{u}(\ell+1))\nonumber\displaybreak[0]\\
&\quad-\rho(I_{n(\tilde{m}+\tilde{p})}-W-H)\mathbf{u}(\ell+1).\nonumber
\end{align}
Due to this and \eqref{eq:WHsumsmallerthanid},
\begin{align}
&\langle \mathbf{u}(\ell+1), \mathbf{B}(\mathbf{y}^\star-\mathbf{y}(\ell+1))\rangle\nonumber\displaybreak[0]\\
\le &\langle \mathbf{u}(\ell+1), \mathbf{z}^\star-\mathbf{z}(\ell+1)\rangle\nonumber\displaybreak[0]\\
&+\rho\langle \mathbf{u}(\ell+1), W(\mathbf{u}(\ell)-\mathbf{u}(\ell+1))\rangle.\label{eq:uk1Bystark1}
\end{align}
Note from \eqref{eq:pextrazupdate} and $\mathbf{z}(0)=\rho H\mathbf{u}(0)$ that $\mathbf{z}(\ell+1)\in\operatorname{Range}(H)$. Moreover, because $\mathbf{y}^\star$ is feasible to problem~\eqref{eq:originalprobleminy}, we have $(\mathbf{1}\otimes I_{\tilde{m}+\tilde{p}})^T(\mathbf{B}\mathbf{y}^\star-\mathbf{c})=\mathbf{0}$, which, together with \eqref{eq:rangeH}, implies that $\mathbf{z}^\star\in\operatorname{Range}(H)$. Thus, by letting $\mathnormal{a}'=\mathbf{z}^\star-\mathbf{z}(\ell+1)\in \operatorname{Range}(H)$, $\mathnormal{a}''=\mathbf{u}(\ell+1)$, and $M=H$ satisfying \eqref{eq:WHsymmetricity} in Lemma~\ref{lem:matrixlemma} and by utilizing \eqref{eq:matrixlemma2} and \eqref{eq:pextrazupdate},
\begin{align}
&\langle\mathbf{u}(\ell\!+\!1), \mathbf{z}^\star\!-\!\mathbf{z}(\ell\!+\!1)\rangle\!=\!\langle \mathbf{z}^\star\!-\!\mathbf{z}(\ell\!+\!1), H^\dag H\mathbf{u}(\ell\!+\!1)\rangle\nonumber\displaybreak[0]\\
&=\frac{1}{\rho}\langle H^\dag(\mathbf{z}(\ell+1)-\mathbf{z}(\ell)), \mathbf{z}^\star-\mathbf{z}(\ell+1)\rangle.\label{eq:uk1zstarzk1}
\end{align}
Furthermore, due to \eqref{eq:WHsymmetricity}, $H^\dag=(H^\dag)^T\succeq \mathbf{O}$. Now let $\mathnormal{a}'=\mathbf{z}^\star-\mathbf{z}(\ell)$, $\mathnormal{a}''=\mathbf{z}^\star-\mathbf{z}(\ell+1)$, and $M=H^\dag$ in Lemma~\ref{lem:matrixlemma}. Thus, \eqref{eq:matrixlemma1} leads to
$\langle H^\dag(\mathbf{z}(\ell+1)-\mathbf{z}(\ell)), \mathbf{z}^\star-\mathbf{z}(\ell+1)\rangle\le\frac{1}{2}(\|\mathbf{z}^\star-\mathbf{z}(\ell)\|_{H^\dag}^2-\|\mathbf{z}^\star-\mathbf{z}(\ell+1)\|_{H^\dag}^2)$. By combining this with \eqref{eq:uk1zstarzk1}, we have
\begin{align}
&\langle\mathbf{u}(\ell+1), \mathbf{z}^\star-\mathbf{z}(\ell\!+\!1)\rangle\nonumber\displaybreak[0]\\
\le&\frac{1}{2\rho}(\|\mathbf{z}^\star-\mathbf{z}(\ell)\|_{H^\dag}^2-\|\mathbf{z}^\star-\mathbf{z}(\ell+1)\|_{H^\dag}^2).\label{eq:uzz<=12rhozzzz}
\end{align}
On the other hand, we set $\mathnormal{a}'=\mathbf{u}(\ell)$, $\mathnormal{a}''=\mathbf{u}(\ell+1)$, and $M=W$ which satisfies \eqref{eq:WHsymmetricity} in Lemma~\ref{lem:matrixlemma}. Then, according to \eqref{eq:matrixlemma1}, $\langle \mathbf{u}(\ell+1), W(\mathbf{u}(\ell)-\mathbf{u}(\ell+1))\rangle\le\frac{1}{2}(\|\mathbf{u}(\ell)\|_W^2-\|\mathbf{u}(\ell+1)\|_W^2)$. By substituting this and \eqref{eq:uzz<=12rhozzzz} into \eqref{eq:uk1Bystark1},
\begin{align*}
&\langle \mathbf{u}(\ell+1), \mathbf{B}(\mathbf{y}^\star-\mathbf{y}(\ell+1))\rangle\nonumber\displaybreak[0]\\
\le&\frac{1}{2\rho}(\|\mathbf{z}^\star-\mathbf{z}(\ell)\|_{H^\dag}^2-\|\mathbf{z}^\star-\mathbf{z}(\ell+1)\|_{H^\dag}^2)\nonumber\displaybreak[0]\\
&+\frac{\rho}{2}(\|\mathbf{u}(\ell)\|_W^2-\|\mathbf{u}(\ell+1)\|_W^2).
\end{align*}
This, along with \eqref{eq:WurhozByy=}, leads to
\begin{align}
&\tilde{\Delta}_2^\ell(\mathbf{y}^\star)-\tilde{\Delta}_2^\ell(\mathbf{y}(\ell+1))\nonumber\displaybreak[0]\\
\le&\frac{1}{2\rho}(\|\mathbf{z}^\star-\mathbf{z}(\ell)\|_{H^\dag}^2-\|\mathbf{z}^\star-\mathbf{z}(\ell+1)\|_{H^\dag}^2)\nonumber\displaybreak[0]\\
&+\frac{\rho}{2}(\|\mathbf{u}(\ell)\|_W^2-\|\mathbf{u}(\ell+1)\|_W^2)\nonumber\displaybreak[0]\\
&-\frac{1}{\rho}\langle \mathbf{B}\mathbf{y}(\ell+1)-\mathbf{c}, \mathbf{B}(\mathbf{y}^\star-\mathbf{y}(\ell+1))\rangle\nonumber\displaybreak[0]\\
&+\frac{1}{2\rho}\|\mathbf{B}\mathbf{y}^\star-\mathbf{c}\|^2-\frac{1}{2\rho}\|\mathbf{B}\mathbf{y}(\ell+1)-\mathbf{c}\|^2.\label{eq:M1starminusM1yk1upperbound}
\end{align}
		
\subsubsection{Upper bound on $\tilde{\Delta}_3^\ell(\mathbf{y}^\star)-\tilde{\Delta}_3^\ell(\mathbf{y}(\ell+1))$}

From \eqref{eq:virtualqueueq} and the initialization $\mathbf{q}(0)=\max\{\mathbf{0}, -\mathbf{G}(\mathbf{y}(0))\}$, we have $\mathbf{q}(\ell)\ge -\mathbf{G}(\mathbf{y}(\ell))$, which, together with $\mathbf{G}(\mathbf{y}^\star)\le \mathbf{0}$, implies that $\langle \mathbf{q}(\ell)+\mathbf{G}(\mathbf{y}(\ell)), \mathbf{G}(\mathbf{y}^\star)\rangle \le 0$. As a result,
\begin{align}
&\!\!\!\langle\mathbf{q}(\ell)+\mathbf{G}(\mathbf{y}(\ell)), \mathbf{G}(\mathbf{y}^\star)\rangle-\langle\mathbf{q}(\ell)+\mathbf{G}(\mathbf{y}(\ell)), \mathbf{G}(\mathbf{y}(\ell+1))\rangle\nonumber\displaybreak[0]\\
&\le-\langle\mathbf{q}(\ell), \mathbf{G}(\mathbf{y}(\ell+1))\rangle-\langle\mathbf{G}(\mathbf{y}(\ell)), \mathbf{G}(\mathbf{y}(\ell+1))\rangle.\label{eq:qGG-qGG<=}
\end{align}
From the Lipschitz continuity of $\mathbf{G}$ in Proposition \ref{prop:propertyofproby}(c), $\langle\mathbf{G}(\mathbf{y}(\ell)), \mathbf{G}(\mathbf{y}(\ell+1))\rangle=\frac{1}{2}(\|\mathbf{G}(\mathbf{y}(\ell))\|^2+\|\mathbf{G}(\mathbf{y}(\ell+1))\|^2)-\frac{1}{2}\|\mathbf{G}(\mathbf{y}(\ell+1))-\mathbf{G}(\mathbf{y}(\ell))\|^2\ge\frac{1}{2}(\|\mathbf{G}(\mathbf{y}(\ell))\|^2+\|\mathbf{G}(\mathbf{y}(\ell+1))\|^2)-\frac{L^2}{2}\|\mathbf{y}(\ell+1)-\mathbf{y}(\ell)\|^2$. Combining this and \eqref{eq:successivepropertyofqk} with \eqref{eq:qGG-qGG<=} results in
\begin{align}
&\tilde{\Delta}_3^\ell(\mathbf{y}^\star)\!-\!\tilde{\Delta}_3^\ell(\mathbf{y}(\ell+1))\!\le\! \frac{1}{2}(\|\mathbf{q}(\ell)\|^2\!-\!\|\mathbf{q}(\ell+1)\|^2)\nonumber\displaybreak[0]\\
&+\frac{\alpha}{2}\|\mathbf{y}^\star\!-\!\mathbf{y}(\ell)\|^2\!+\!\frac{1}{2}(\|\mathbf{G}(\mathbf{y}(\ell\!+\!1))\|^2\!-\!\|\mathbf{G}(\mathbf{y}(\ell))\|^2)\nonumber\displaybreak[0]\\
&+\frac{L^2-\alpha}{2}\|\mathbf{y}(\ell+1)\!-\!\mathbf{y}(\ell)\|^2.\label{eq:M3starminusM3yk1upperbound}
\end{align}

Now with the upper bounds on $\tilde{\Delta}_i^\ell(\mathbf{y}^\star)-\tilde{\Delta}_i^\ell(\mathbf{y}(\ell+1))$ $\forall i=1,2,3$ in \eqref{eq:M2starminusM2yk1upperbound}, \eqref{eq:M1starminusM1yk1upperbound}, and \eqref{eq:M3starminusM3yk1upperbound}, it can be shown that the right-hand side of \eqref{eq:Phiyk1ystar} is less than or equal to $R(\ell)-R(\ell+1)+\frac{L_f+L^2-\alpha}{2}\|\mathbf{y}(\ell+1)-\mathbf{y}(\ell)\|^2$. Moreover, due to the condition $\alpha\ge L_f+L^2$, \eqref{eq:FandRk} holds, which leads to \eqref{eq:sumPhiykystarupperbound}.

Finally, we show that $R(k)\ge0$ $\forall k\ge0$. Since $\mathbf{q}(0) = \max\{-\mathbf{G}(\mathbf{y}(0)),\mathbf{0}\}$ and $\mathbf{G}(\mathbf{y}^\star)\le \mathbf{0}$, it can be shown that
\begin{align}
\|\mathbf{G}(\mathbf{y}(0))\|^2\le\|\mathbf{q}(0)\|^2+\|\mathbf{G}(\mathbf{y}(0))-\mathbf{G}(\mathbf{y}^\star)\|^2.\label{eq:G<=q+GG}
\end{align} 
Because of Proposition~\ref{prop:propertyofproby}(c) and because $\alpha\ge L^2$, we have $\|\mathbf{G}(\mathbf{y}(0))-\mathbf{G}(\mathbf{y}^\star)\|^2\le\alpha\|\mathbf{y}(0)-\mathbf{y}^\star\|^2$. This, along with \eqref{eq:G<=q+GG}, implies $0\le\frac{\alpha}{2}\|\mathbf{y}(0)-\mathbf{y}^\star\|^2+\frac{1}{2}\|\mathbf{q}(0)\|^2-\frac{1}{2}\|\mathbf{G}(\mathbf{y}(0))\|^2\le R(0)$. For $k\ge 1$, $R(k)\ge0$ can be shown via \eqref{eq:qkplus1normlargethang}.

\subsection{Proof of Lemma \ref{lemma:feasibility}}\label{sec:prooflemmafeasibility} 

Let $k\ge 1$. To prove \eqref{eq:Gbaryklemma}, note from \eqref{eq:virtualqueueq} that $\mathbf{q}(\ell)\ge \mathbf{q}(\ell-1)+\mathbf{G}(\mathbf{y}(\ell))$ $\forall \ell\ge 1$. Adding this over $\ell=1,\ldots,k$ results in $\mathbf{q}(k)\ge \mathbf{q}(0)+\sum_{\ell=1}^k \mathbf{G}(\mathbf{y}(\ell))$. Moreover, since $\mathbf{q}(0)\ge \mathbf{0}$,
\begin{equation}\label{eq:sumGupperbound}
\sum_{\ell=1}^k \mathbf{G}(\mathbf{y}(\ell))\le \mathbf{q}(k).
\end{equation}
Furthermore, since (each row of) $\mathbf{G}(\mathbf{y})$ is convex, $\mathbf{G}(\bar{\mathbf{y}}(k))\le \frac{1}{k}\sum_{\ell=1}^k \mathbf{G}(\mathbf{y}(\ell)).$ This, along with \eqref{eq:sumGupperbound}, implies \eqref{eq:Gbaryklemma}.

To prove \eqref{eq:Bbaryklemma}, note from \eqref{eq:WHsymmetricity}, \eqref{eq:H1=0}, and \eqref{eq:pextrazupdate} that $(\mathbf{1}_n\otimes I_{\tilde{m}+\tilde{p}})^T \mathbf{z}(\ell+1) = (\mathbf{1}_n\otimes I_{\tilde{m}+\tilde{p}})^T\mathbf{z}(\ell)$ $\forall\ell\ge 0$. Also, $(\mathbf{1}_n\otimes I_{\tilde{m}+\tilde{p}})^T\mathbf{z}(0)=\rho (\mathbf{1}_n\otimes I_{\tilde{m}+\tilde{p}})^TH\mathbf{u}(0)=\mathbf{0}_{\tilde{m}+\tilde{p}}$. Therefore,
\begin{equation}\label{eq:sumzkissumz0}
(\mathbf{1}_n\otimes I_{\tilde{m}+\tilde{p}})^T\mathbf{z}(\ell)=\mathbf{0}_{\tilde{m}+\tilde{p}},\quad\forall \ell\ge 0.
\end{equation}
Additionally, from \eqref{eq:WHsymmetricity}, \eqref{eq:W1is1}, and \eqref{eq:relationofdualandprimal},
\begin{align}
&(\mathbf{1}_n\otimes I_{\tilde{m}+\tilde{p}})^T(\mathbf{u}(\ell+1)-\mathbf{u}(\ell))\nonumber\displaybreak[0]\\
=&\frac{1}{\rho}(\mathbf{1}_n\otimes I_{\tilde{m}+\tilde{p}})^T(\mathbf{B}\mathbf{y}(\ell+1)-\mathbf{c}-\mathbf{z}(\ell)),\quad\forall\ell\ge 0.\label{eq:sumusuccessive}
\end{align}
Now we substitute \eqref{eq:sumzkissumz0} into \eqref{eq:sumusuccessive}, and then add the resulting equation from $\ell=0$ to $\ell=k-1$, leading to 
\begin{align}
&(\mathbf{1}_n\otimes I_{\tilde{m}+\tilde{p}})^T(\mathbf{u}(k)-\mathbf{u}(0))\nonumber\displaybreak[0]\\
=&\frac{1}{\rho}\sum_{\ell=1}^k(\mathbf{1}_n\otimes I_{\tilde{m}+\tilde{p}})^T(\mathbf{B}\mathbf{y}(\ell)-\mathbf{c}).\label{eq:sumbylminusc}
\end{align}
This, together with $(\mathbf{1}_n\otimes I_{\tilde{m}+\tilde{p}})^T(\mathbf{B}\bar{\mathbf{y}}(k)-\mathbf{c})=\frac{1}{k}\sum_{\ell=1}^k(\mathbf{1}_n\otimes I_{\tilde{m}+\tilde{p}})^T(\mathbf{B}\mathbf{y}(\ell)-\mathbf{c})$, gives \eqref{eq:Bbaryklemma}.

Finally, to prove \eqref{eq:Bsbaryklemma}, note that 
$\|\mathbf{B}^s\bar{\mathbf{y}}(k)-\mathbf{c}^s\|^2=\|\mathbf{B}^s(\bar{\mathbf{y}}(k)-\mathbf{y}^\star)\|^2=(\bar{\mathbf{y}}(k)-\mathbf{y}^\star)^T(\mathbf{B}^s)^T\mathbf{B}^s((\mathbf{B}^s)^T\mathbf{B}^s)^\dag(\mathbf{B}^s)^T\mathbf{B}^s(\bar{\mathbf{y}}(k)-\mathbf{y}^\star)=\|(\mathbf{B}^s)^T\mathbf{B}^s(\bar{\mathbf{y}}(k)-\mathbf{y}^\star)\|_{((\mathbf{B}^s)^T\mathbf{B}^s)^\dag}^2=\|(\mathbf{B}^s)^T(\mathbf{B}^s\bar{\mathbf{y}}(k)-\mathbf{c}^s)\|_{((\mathbf{B}^s)^T\mathbf{B}^s)^\dag}^2$. In addition, due to \eqref{eq:sparsetildevupdate},
\begin{align}
&(\mathbf{B}^s)^T(\mathbf{B}^s\bar{\mathbf{y}}(k)-\mathbf{c}^s)=\frac{1}{k}\sum_{\ell=1}^k(\mathbf{B}^s)^T(\mathbf{B}^s\mathbf{y}(\ell)-\mathbf{c}^s)\nonumber\displaybreak[0]\\
&=\frac{1}{k}\sum_{\ell=1}^k\frac{1}{\gamma}(\mathbf{v}(\ell)-\mathbf{v}(\ell-1))=\frac{\mathbf{v}(k)-\mathbf{v}(0)}{\gamma k}.\nonumber
\end{align}
It follows that \eqref{eq:Bsbaryklemma} holds.

\subsection{Proof of Theorem \ref{theo:funcval}}\label{ssec:proofoffuncval}

Let $k\ge1$. We first show the boundedness of $\|\mathbf{q}(k)\|$, $\|(\mathbf{1}\otimes I_{\tilde{m}+\tilde{p}})^T(\mathbf{u}(k)-\mathbf{u}(0))\|$, and $\|\mathbf{v}(k)-\mathbf{v}(0)\|_{((\mathbf{B}^s)^T\mathbf{B}^s)^{\dag}}$ by virtue of Lemma~\ref{lemma:boundedsumF}, so that the constraint violations of \eqref{eq:originalprobleminy} in Lemma~\ref{lemma:feasibility} can be bounded by $O(1/k)$. 

To do so, note from \eqref{eq:ystarminimum} that $\Phi(\mathbf{y}^\star)\le\Phi(\mathbf{y}(\ell))+\langle\mathbf{q}^\star,\mathbf{G}(\mathbf{y}(\ell))\rangle+\langle u^\star,(\mathbf{1}_n\otimes I_{\tilde{m}+\tilde{p}})^T(\mathbf{B}\mathbf{y}(\ell)-\mathbf{c})\rangle+\langle\tilde{\mathbf{v}}^\star,\mathbf{B}^s\mathbf{y}(\ell)-\mathbf{c}^s\rangle$ $\forall\ell\ge1$. Adding this inequality over $\ell=1,\ldots,k$ yields
\begin{align}
&\sum_{\ell=1}^k(\Phi(\mathbf{y}^\star)-\Phi(\mathbf{y}(\ell)))\le\langle u^\star,\sum_{\ell=1}^k(\mathbf{1}_n\!\otimes\!I_{\tilde{m}+\tilde{p}})^T(\mathbf{B}\mathbf{y}(\ell)\!-\!\mathbf{c})\rangle\nonumber\displaybreak[0]\\
&+\langle\mathbf{q}^\star,\sum_{\ell=1}^k\mathbf{G}(\mathbf{y}(\ell))\rangle+\langle\tilde{\mathbf{v}}^\star,\sum_{\ell=1}^k(\mathbf{B}^s\mathbf{y}(\ell)-\mathbf{c}^s)\rangle.\label{eq:sumphiphi<=usum1BcqsumGvsumBc}
\end{align}
In addition, from \eqref{eq:Bsbaryklemma},
\begin{align}
\|\sum_{\ell=1}^k(\mathbf{B}^s\mathbf{y}(\ell)-\mathbf{c}^s)\|&=k\|\mathbf{B}^s\bar{\mathbf{y}}(k)-\mathbf{c}^s\|\nonumber\displaybreak[0]\\
&=\frac{1}{\gamma}\|\mathbf{v}(k)-\mathbf{v}(0)\|_{((\mathbf{B}^s)^T\mathbf{B}^s)^{\dag}}.\label{eq:sumbsyminusc}
\end{align}
Also, because $\mathbf{q}^\star\ge\mathbf{0}$ and because of \eqref{eq:sumGupperbound}, 
\begin{align}
\langle\mathbf{q}^\star,\sum_{\ell=1}^k\mathbf{G}(\mathbf{y}(\ell))\rangle\le\langle\mathbf{q}^\star,\mathbf{q}(k)\rangle.\label{eq:qsumG<=qq}
\end{align}
By incorporating \eqref{eq:sumbylminusc}, \eqref{eq:sumbsyminusc}, and \eqref{eq:qsumG<=qq} into \eqref{eq:sumphiphi<=usum1BcqsumGvsumBc}, we obtain
\begin{align}
&\!\!\!\!\!\sum_{\ell=1}^k\!(\Phi(\mathbf{y}^\star)\!-\!\Phi(\mathbf{y}(k)))\!\le\!\rho\|u^\star\|\!\cdot\!\|(\mathbf{1}_n\otimes I_{\tilde{m}+\tilde{p}})^T\!(\mathbf{u}(k)\!-\!\mathbf{u}(0))\|\nonumber\displaybreak[0]\\
&+\|\mathbf{q}^\star\|\cdot\|\mathbf{q}(k)\|+\frac{1}{\gamma}\|\tilde{\mathbf{v}}^\star\|\cdot\|\mathbf{v}(k)-\mathbf{v}(0)\|_{((\mathbf{B}^s)^T\mathbf{B}^s)^{\dag}}.\label{eq:sumPhiystaryk1upperbound}
\end{align}
Furthermore, since $P^W=(P^W)^T\succeq \mathbf{O}$ and $P^W\mathbf{1}_n=\mathbf{1}_n$, we have $P^W-\frac{\mathbf{1}_n\mathbf{1}_n^T}{n}\succeq \mathbf{O}$, which implies $W=P^W\otimes I_{\tilde{m}+\tilde{p}}\succeq \frac{(\mathbf{1}_n\otimes I_{\tilde{m}+\tilde{p}})(\mathbf{1}_n\otimes I_{\tilde{m}+\tilde{p}})^T}{n}$. Consequently,
\begin{align}
&\|(\mathbf{1}_n\otimes I_{\tilde{m}+\tilde{p}})^T(\mathbf{u}(k)-\mathbf{u}(0))\|\le\sqrt{n}\|\mathbf{u}(k)-\mathbf{u}(0)\|_W\nonumber\displaybreak[0]\\
&\le\sqrt{n}(\|\mathbf{u}(k)\|_W+\|\mathbf{u}(0)\|_W).\label{eq:1uku0upperbound}
\end{align}
By substituting \eqref{eq:1uku0upperbound} into \eqref{eq:sumPhiystaryk1upperbound} and then utilizing Lemma~\ref{lemma:boundedsumF}, 
\begin{align}
&R(k)\le R(0)+\|\mathbf{q}^\star\|\cdot\|\mathbf{q}(k)\|\nonumber\displaybreak[0]\\
&+\rho\sqrt{n}\|u^\star\|\Bigl(\|\mathbf{u}(k)\|_W+\|\mathbf{u}(0)\|_W\Bigr)\nonumber\displaybreak[0]\\
&+\frac{1}{\gamma}\|\tilde{\mathbf{v}}^\star\|\Bigl(\|\mathbf{v}(k)\|_{((\mathbf{B}^s)^T\mathbf{B}^s)^\dag}+\|\mathbf{v}(0)\|_{((\mathbf{B}^s)^T\mathbf{B}^s)^\dag}\Bigr).\label{eq:Rkupperboundcrossterms}
\end{align}
On the other hand, from \eqref{eq:qkplus1normlargethang}, Proposition~\ref{prop:propertyofproby}(c), and $L^2\le\alpha$,
\begin{align*}
&\|\mathbf{G}(\mathbf{y}(k))\|^2\le\frac{1}{2}\|\mathbf{q}(k)\|^2\!+\!\frac{1}{2}\|\mathbf{G}(\mathbf{y}(k))\!-\!\mathbf{G}(\mathbf{y}^\star)\!+\!\mathbf{G}(\mathbf{y}^\star)\|^2\displaybreak[0]\\
&\le\frac{1}{2}\|\mathbf{q}(k)\|^2+\|\mathbf{G}(\mathbf{y}(k))-\mathbf{G}(\mathbf{y}^\star)\|^2+\|\mathbf{G}(\mathbf{y}^\star)\|^2\displaybreak[0]\\
&\le\frac{1}{2}\|\mathbf{q}(k)\|^2+\alpha\|\mathbf{y}(k)-\mathbf{y}^\star\|^2+\|\mathbf{G}(\mathbf{y}^\star)\|^2.
\end{align*}
It follows from the definition of $R(k)$ in Lemma~\ref{lemma:boundedsumF} that 
\begin{align*}
R(k)\ge &~\frac{\rho}{2}\|\mathbf{u}(k)\|_W^2+\frac{1}{2\gamma}\|\mathbf{v}(k)\|_{((\mathbf{B}^s)^T\mathbf{B}^s)^\dag}^2\displaybreak[0]\\
&+\frac{1}{4}\|\mathbf{q}(k)\|^2-\frac{\|\mathbf{G}(\mathbf{y}^\star)\|^2}{2}.
\end{align*}
Since the right-hand side of the above inequality cannot exceed that of \eqref{eq:Rkupperboundcrossterms}, we are able to derive
\begin{align}
&\frac{1}{4}\Bigl(\|\mathbf{q}(k)\|-2\|\mathbf{q}^\star\|\Bigr)^2+\frac{\rho}{2}\Bigl(\|\mathbf{u}(k)\|_W\!-\!\sqrt{n}\|u^\star\|\Bigr)^2\nonumber\displaybreak[0]\\
&+\!\frac{1}{2\gamma}\Bigl(\|\mathbf{v}(k)\|_{((\mathbf{B}^s)^T\mathbf{B}^s)^\dag}\!-\!\|\tilde{\mathbf{v}}^\star\|\Bigr)^2\le C.\label{eq:upperboundedbyc}
\end{align}
From \eqref{eq:upperboundedbyc}, we obtain $\frac{1}{4}\Bigl(\|\mathbf{q}(k)\|-2\|\mathbf{q}^\star\|\Bigr)^2\le C$, which gives
\begin{align}
\|\mathbf{q}(k)\|\le V_{\mathbf{q}}\label{eq:qbound}.
\end{align}
Due again to \eqref{eq:upperboundedbyc}, $\|\mathbf{u}(k)\|_W\le\sqrt{n}\|u^\star\|+\sqrt{\frac{2C}{\rho}}$. It then follows from \eqref{eq:1uku0upperbound} that
\begin{align}
\|(\mathbf{1}_n\otimes I_{\tilde{m}+\tilde{p}})^T(\mathbf{u}(k)-\mathbf{u}(0))\|\le V_{\mathbf{u}}.\label{eq:ubound}
\end{align}
Similarly, we derive from \eqref{eq:upperboundedbyc} that 
\begin{align}
\|\mathbf{v}(k)-\mathbf{v}(0)\|_{((\mathbf{B}^s)^T\mathbf{B}^s)^{\dag}}\le V_{\mathbf{v}}.\label{eq:vbound}
\end{align}

As a result, \eqref{eq:Gerror}--\eqref{eq:Bserror} can be directly derived from Lemma~\ref{lemma:feasibility} and \eqref{eq:qbound}--\eqref{eq:vbound}. To show \eqref{eq:theofuncvalbound}, note from the convexity of $\Phi$, Lemma~\ref{lemma:boundedsumF}, and $R(k)\ge0$ that
\begin{align*}
\Phi(\bar{\mathbf{y}}(k))-\Phi(\mathbf{y}^\star)\le\frac{1}{k}\sum_{\ell=1}^k(\Phi(\mathbf{y}(\ell))-\Phi(\mathbf{y}^\star))\le \frac{R(0)}{k}.
\end{align*}
Moreover, using $\mathbf{q}^\star\ge\mathbf{0}$, \eqref{eq:ystarminimum}--\eqref{eq:Bserror}, and the Cauchy-Schwarz inequality, we obtain
\begin{align*}
&\Phi(\bar{\mathbf{y}}(k))-\Phi(\mathbf{y}^\star)\ge-\langle \mathbf{q}^\star,\mathbf{G}(\bar{\mathbf{y}}(k))\rangle-\langle\tilde{\mathbf{v}}^\star,\mathbf{B}^s\bar{\mathbf{y}}(k)-\mathbf{c}^s\rangle\displaybreak[0]\\
&-\langle u^\star,(\mathbf{1}_n\otimes I_{\tilde{m}+\tilde{p}})(\mathbf{B}\bar{\mathbf{y}}(k)\!-\!\mathbf{c})\rangle\ge-\frac{R_{\Phi}}{k}.
\end{align*}
Therefore, \eqref{eq:theofuncvalbound} holds.

\subsection{Proof of Corollary~\ref{cor:originalprobconv}}\label{ssec:proofofcororiginprob}

First, from the definition of $\Phi$ in Section~\ref{ssec:probtrans}, \eqref{eq:funcvalbound} is equivalent to \eqref{eq:theofuncvalbound}. Then, from the definitions of $\mathbf{B}^s$ and $\mathbf{c}^s$, we have $\|\mathbf{B}^s\bar{\mathbf{y}}(k)-\mathbf{c}^s\|=\sqrt{\sum_{i\in\mathcal{V}^{\operatorname{eq}}}\|(\sum_{j\in S_i^{\operatorname{eq}}}A_{ij}^s\bar{x}_j(k))-b_i^s\|^2}\ge\|(\sum_{j\in S_i^{\operatorname{eq}}}A_{ij}^s\bar{x}_j(k))-b_i^s\|$ $\forall i\in\mathcal{V}^{\operatorname{eq}}$, which, together with \eqref{eq:Bserror}, leads to \eqref{eq:sparseeqconv}. From the definitions of $\mathbf{B}$ and $\mathbf{c}$, 
\begin{align}
&\|(\mathbf{1}\otimes I_{\tilde{m}+\tilde{p}})^T(\mathbf{B}\bar{\mathbf{y}}(k)-\mathbf{c})\|^2\nonumber\displaybreak[0]\\
=&\|\sum_{i\in \mathcal{V}} \bar{t}_i(k)\|^2+\|\sum_{i\in \mathcal{V}} (A_i\bar{x}_i(k)\!-\!b_i)\|^2,\label{eq:1IByc=sqrtsumt+sumAxb}
\end{align}
where $\bar{t}_i(k)=\frac{1}{k}\sum_{\ell=1}^k t_i(\ell)$ with each $t_i(\ell)$ generated by Algorithm~\ref{alg:IPLUX}. Due to \eqref{eq:1IByc=sqrtsumt+sumAxb}, $\|\sum_{i\in \mathcal{V}} (A_i\bar{x}_i(k)-b_i)\|\le \|(\mathbf{1}\otimes I_{\tilde{m}+\tilde{p}})^T(\mathbf{B}\bar{\mathbf{y}}(k)-\mathbf{c})\|$. Because of this and \eqref{eq:Berror}, we can see that \eqref{eq:globeqconv} is satisfied. Due to the definition of $\mathbf{G}$ and \eqref{eq:Gerror}, we know that \eqref{eq:sparseineqconv} holds and
\begin{equation}\label{eq:gbarxikminusti}
g_i(\bar{x}_i(k))-\bar{t}_i(k)\le \frac{V_{\mathbf{q}}}{k}\mathbf{1}_{\tilde{p}},\quad\forall i\in \mathcal{V}.
\end{equation}
Moreover, \eqref{eq:1IByc=sqrtsumt+sumAxb} also suggests
\begin{align}
\|\sum_{i\in \mathcal{V}} \bar{t}_i(k)\|\le \|(\mathbf{1}_n\otimes I_{\tilde{m}+\tilde{p}})^T(\mathbf{B}\bar{\mathbf{y}}(k)-\mathbf{c})\|.\label{eq:sumt<=1IByc}
\end{align}
Furthermore, $\sum_{i\in \mathcal{V}} g_i(\bar{x}_i(k))=\sum_{i\in\mathcal{V}}(g_i(\bar{x}_i(k))-\bar{t}_i(k))+\sum_{i\in \mathcal{V}} \bar{t}_i(k)\le\sum_{i\in \mathcal{V}}(g_i(\bar{x}_i(k))-\bar{t}_i(k))+\|\sum_{i\in \mathcal{V}} \bar{t}_i(k)\|\mathbf{1}_{\tilde{p}}$. It follows from \eqref{eq:gbarxikminusti}, \eqref{eq:sumt<=1IByc}, and \eqref{eq:Berror} that \eqref{eq:globineqconv} holds. 

\bibliographystyle{IEEEtran}
\bibliography{reference}

\end{document}